\documentclass[10pt,reqno]{amsart}
\usepackage[a4paper,top=2.5cm,right=3.3cm,left=3.3cm,bottom=2.5cm,footskip=20pt]{geometry}
\usepackage[hidelinks]{hyperref} 
\urlstyle{same}
\pdfoutput=1
\usepackage{amssymb,amsaddr,eulervm,palatino,enumitem,float,microtype}
\usepackage[USenglish]{babel}
\usepackage{mathtools}
\mathtoolsset{showonlyrefs=true}
\usepackage{tikz}
\usetikzlibrary{arrows,arrows.meta,positioning,calc}
\setenumerate{label=\textup{(\roman*)},itemsep=2pt,topsep=2pt,leftmargin=2.5em}
\usepackage{abstract}

\tikzset{
>=stealth,
node distance=2cm,
main node/.style={circle,inner sep=1.5pt,fill=black},
freccia/.style={->,shorten >=2pt,shorten <=2pt,semithick},
ciclo/.style={out=50, in=130, loop, distance=2cm, ->,semithick}}

\linespread{1.2}
\allowdisplaybreaks[4]
\pagestyle{plain}

\newcommand{\doi}[1]{\url{https://doi.org/#1}}
\newcommand{\isbn}[1]{\url{https://isbnsearch.org/isbn/#1}}
\newcommand{\arxiv}[1]{\href{https://arxiv.org/abs/#1}{preprint arXiv:#1}}
\newcommand{\web}[1]{\url{#1}}

\renewcommand{\emptyset}{\varnothing}
\numberwithin{equation}{section}

\newtheorem{thm}{Theorem}[section]
\newtheorem{lemma}[thm]{Lemma}
\newtheorem{prop}[thm]{Proposition}
\newtheorem{rem}[thm]{Remark}
\newtheorem{cor}[thm]{Corollary}
\newtheorem{df}[thm]{Definition}
\newtheorem{ex}[thm]{Example}

\newcommand{\C}{\mathbb{C}}
\newcommand{\Z}{\mathbb{Z}}
\newcommand{\N}{\mathbb{N}}
\newcommand{\id}{\mathrm{id}}
\newcommand{\mat}[1]{\bigg(\!\begin{array}{cc}#1\end{array}\!\bigg)}


\makeatletter
\def\@tocline#1#2#3#4#5#6#7{\relax
  \ifnum #1>\c@tocdepth 
  \else
    \par \addpenalty\@secpenalty\addvspace{#2}%
    \begingroup \hyphenpenalty\@M
    \@ifempty{#4}{%
      \@tempdima\csname r@tocindent\number#1\endcsname\relax
    }{%
      \@tempdima#4\relax
    }%
    \parindent\z@ \leftskip#3\relax \advance\leftskip\@tempdima\relax
    \rightskip\@pnumwidth plus4em \parfillskip-\@pnumwidth
    #5\leavevmode\hskip-\@tempdima
      \ifcase #1
       \or\or \hskip 1em \or \hskip 2em \else \hskip 3em \fi%
      #6 \hskip 0.5em \nobreak\relax
    \dotfill\hbox to\@pnumwidth{\@tocpagenum{#7}}\par
    \nobreak
    \endgroup
  \fi}
\makeatother

\begin{document}

\title{\vspace*{-1cm}On the K-theory of the AF core of a graph C*-algebra}

\author[F.~D'Andrea]{\vspace*{-5mm}Francesco D'Andrea}

\address{Dipartimento di Matematica e Applicazioni ``R.~Caccioppoli'' \\ Universit\`a di Napoli Federico II \\
Complesso MSA, Via Cintia, 80126 Napoli, Italy}

\subjclass[2020]{Primary: 46L80; Secondary: 46L67; 46L85.}

\keywords{Graph C*-algebras, AF algebras, Vaksman-Soibelman quantum spheres, quantum projective spaces, Atiyah-Todd identites, Cuntz algebras, UHF algebras, nc space of Penrose tilings.}

\maketitle

\begin{abstract}
\vspace{3pt}

In this paper, we study multiplicative structures on the K-theory of the core $A$ of the C*-algebra of a directed graph $E$. In the first part of the paper, we study embeddings $E\to E\times E$ that induce a *-homomorphism $A\otimes A\to A$.
Through the K{\"u}nneth formula, any such a *-homomorphism induces a ring structure on $K_*(A)$.
In the second part, we give conditions on $E$ such that $K_*(A)$ is generated by ``noncommutative line bundles'' (invertible bimodules). The same conditions guarantee the existence of a homomorphism of abelian groups $K_0(A)\to\Z[\lambda]/(\det(\lambda\Gamma-1))$ (where $\Gamma$ is the adjacency matrix of $E$) that is compatible with the tensor product of line bundles. Examples include the C*-algebra $C(\C P^{n-1}_q)$ of a quantum projective space, the $UHF(n^\infty)$ algebra, and the C*-algebra of the space parameterizing Penrose tilings. For the first algebra, as a corollary we recover some identities that classically follow from the ring structure of $K^0(\C P^{n-1})$, and that were proved by Arici, Brain and Landi in the quantum case.
Incidentally, we observe that the C*-algebra of Penrose tilings is the AF core of the Cuntz algebra $\mathcal{O}_2$, if the latter is realized using the appropriate graph.
\end{abstract}

\vspace{7mm}

\begin{center}
\begin{minipage}{0.8\textwidth}
\parskip=0pt\small\tableofcontents
\end{minipage}
\end{center}

\newpage

\section{Introduction}

A crucial difference between the K-theory of a topological space and that of a C*-algebra is that the former is a ring, with product induced by the tensor product of vector bundles, while the latter is, in general, only an abelian group.

A classical results by Atiyah and Todd states that, for all $n\geq 2$,
\begin{equation}\label{eq:TPol}
K^0(\C P^{n-1})=\Z[x]/(x^n)
\end{equation}
is a ring of truncated polynomials \cite{AT60}. In the above formula, $x:=1-[L_1]$ is the Euler class of the dual $L_1$ of the tautological line bundle (see e.g.~\cite[Cor.~2.8, page 291]{Kar78}).
Notice that $K^*(\C P^{n-1})$ and $K^*$ of the discrete space with $n$ points are isomorphic as $\Z/2$-graded abelian groups, but not as rings. This is a very simple and naive example explaining the importance of the ring structure of topological K-theory. (In this example, as well as in the case of AF algebras, $K^1$ is zero and we will often forget about it.)

Let $L_k$ be the tensor product of $k\geq 0$ copies of $L_1$, with the convention that $L_0$ is the trivial line bundle (so, $[L_0]=1$ is the unit of the K-ring).
It follows from \eqref{eq:TPol} that the classes of line bundles
\begin{equation}\label{eq:Kbasis}
\big\{ [L_k]:0\leq k\leq n-1 \big\}
\end{equation}
form a basis of $K^0(\C P^{n-1})$ as a $\Z$-module. Using the binomial formula and $[L_1]^k=[L_k]$ for $k\in\Z$, we can rewrite $x^n=0$ as the following identity:
\begin{equation}\label{eq:AT1}
\sum_{k=0}^n\binom{n}{k}(-1)^k[L_k]=0 .
\end{equation}
Following \cite{DHMSZ20}, we will refer to \eqref{eq:AT1} as the \emph{Atiyah-Todd identity} for $\C P^{n-1}$.

\smallskip

Despite the lack of ring structure, for the C*-algebra $C(\C P^{n-1}_q)$ of a quantum projective space, Arici, Brain and Landi in \cite{ABL14} proved a result which is analogous to \eqref{eq:AT1}. In this context, compact Hausdorff spaces are replaced by unital C*-algebras and line bundles are replaced by self-Morita equivalence bimodules. One starts with the C*-algebra of a Vaksman-Soibelman quantum sphere $C(\mathbb{S}^{2n-1}_q)$ \cite{VS91}.
This algebra is strongly $\Z$-graded, with degree $0$ part given by $A:=C(\C P^{n-1}_q)$ and degree $1$ part given by an $A$-bimodule $\mathcal{L}_1$ which is the analogue of the bimodule of continuous sections of the line bundle $L_1$. It was shown in \cite{ABL14}, using an index argument, that powers $\mathcal{L}_k:=(\mathcal{L}_1)^{\otimes_Ak}$
of this bimodule satisfy the identity \eqref{eq:AT1} (here $\otimes_A$ denotes the balanced tensor product of $A$-bimodules, and by $[\mathcal{L}_k]$ we mean the K-theory class of $\mathcal{L}_k$ as a left $A$-module).
Moreover, it is known that $K_0(A)$ has a basis, as a $\Z$-module, given by the classes of $\mathcal{L}_0,\mathcal{L}_1,\ldots,\mathcal{L}_{n-1}$ (see e.g.~\cite{DL10}). If there were a ring structure on $K_0(A)$ induced by the balanced tensor product of bimodules, called $x:=1-[\mathcal{L}_1]$ like in the classical case, then we could interpret \eqref{eq:AT1} by arguing that $K_0(A)$ is isomorphic to \eqref{eq:TPol} as a ring.

\smallskip

The naive idea would be, given a C*-algebra $A$, to represent the K-theory class of a finitely generated projective left $A$-module by a bimodule, and for any two such bimodules $M$ and $N$ to define a product in K-theory by
\begin{center}
`` $[M]\cdot [N]:=[M\otimes_AN]$ ``.
\end{center}
This idea, unfortunately, cannot work. Isomorphism of bimodules is stronger than isomorphism of left modules, and the same $K_0$-class might be represented by non-isomorphic bimodules, so that the above operation is in general ill-defined. In addition, not every K-theory class can be represented by a bimodule. A simple counterexample is the left $M_n(\C)$-module $\C^n$, which has no $M_n(\C)$-bimodule structure if $n\geq 2$.

The issue of possible ring structures on the K-theory of a noncommutative C*-algebra is addressed in \cite{DM22}, where we construct a multiplicative K-theory functor using finitely generated projective modules associated to Hopf-Galois extensions of a C*-algebra. We will not discuss this approach here.

\medskip

The first part of the present paper is about some attempts to find a *-homomorphism $C(\C P^{n-1}_q)\otimes C(\C P^{n-1}_q)\to C(\C P^{n-1}_q)$ inducing the ring structure \eqref{eq:TPol}.
Our story begins with the intersection product of KK-theory (\textit{aka} external Kasparov product).
For any separable C*-algebra $A$, it gives a morphism of abelian groups 
\begin{equation}\label{eq:1}
K_*(A)\otimes K_*(A)\to K_*(A\otimes A) ,
\end{equation}
where the one on the right hand side is the minimal tensor product of C*-algebras.
On the other hand, any *-homomorphism
\begin{equation}\label{eq:2}
\psi:A\otimes A\to A
\end{equation}
induces a morphism $K_*(A\otimes A)\to K_*(A)$. Composed with \eqref{eq:1}, this gives a morphism of abelian groups $K_*(A)\otimes K_*(A)\to K_*(A)$ that transforms $K_*(A)$ into a ring.
A natural candidate for $\psi$ would be the multiplication map, but this is a *-homomorphism if and only if $A$ is commutative.
It is then an interesting problem, for non-commutative C*-algebras $A$, to find a *-homomorphism (if any) of the form \eqref{eq:2}.

A remarkable property of our main example $C(\C P^{n-1}_q)$ is that it is an AF algebra. Note that if $A$ is an AF-algebra, then $K_*(A)$ has the structure of an ordered abelian group. It is known, in this case, that if there is a multiplication on $K_*(A)$ that gives it the structure of an ordered ring, then such a multiplication map must be induced by a (non-explicit) *-homomorphism of the form \eqref{eq:2}, cf.~\cite{AE20}.

In the spirit of Noncommutative Geometry \cite{Con94}, we will think of a (unital) C*-algebra as describing some virtual (compact) quantum space.

For a compact Hausdorff space $X$, the canonical ring structure of $K^*(X)$ is induced by the multiplication map 
$C(X)\otimes C(X)\to C(X)$, that we can think as the composition of the isomorphism
\begin{equation}\label{eq:XxX}
C(X)\otimes C(X)\cong C(X\times X)
\end{equation}
with the pullback
\begin{equation}\label{eq:multimap}
C(X\times X)\to C(X)
\end{equation}
of the diagonal embedding $X\to X\times X$.
We would like to find a similar construction for a nice class of compact quantum spaces. This class should include the example
we are interested in, namely $C(\C P^{n-1}_q)$, which is both the AF core of a graph C*-algebra and a graph C*-algebra itself \cite{HS02}.
The idea is, then, to work within the framework of graph C*-algebras.

We need three ingredients: an isomorphism similar to \eqref{eq:XxX}, where the Cartesian product of spaces is replaced by the Cartesian product of graphs; an embedding of graphs similar to the diagonal embedding of spaces; and a framework where the association of a graph to its graph C*-algebra gives rise to a contravariant functor, so that from the graph embedding we may pass to a *-homomorphism similar to \eqref{eq:multimap}.

In general, if $E$ is a directed graph, $C^*(E)\otimes C^*(E)$ is not isomorphic to $C^*(E\times E)$ (in fact, the class of graph C*-algebras is not even closed under tensor product).
Nevertheless, if $E$ is a row-finite graph with no sinks, there is an injective *-homomorphism $C^*(E\times E)\to C^*(E)\otimes C^*(E)$ which induces an isomorphism of AF cores:
\begin{equation}\label{eq:7}
C^*(E)^{U(1)}\otimes C^*(E)^{U(1)}\cong C^*(E\times E)^{U(1)} .
\end{equation}
This takes care of the first part of the construction in \eqref{eq:XxX}.

Next, for any directed graph $E$, there is a diagonal embedding
\begin{equation}\label{eq:6b}
E\to E\times E .
\end{equation}
Unfortunately, the association $E\to C^*(E)$ sending a graph to its graph C*-algebra doesn't give rise to a contravariant functor (at least not in a naive way, see e.g.~\cite{HT24}), and here is where our dreams crash and burn.
We have a functor (to $U(1)$-C*-algebras) only if we restrict the graph morphisms to a class of ``admissible'' ones.
It is then natural to wonder for what class of graphs the diagonal embedding is admissible. It turns out the diagonal embedding is admissible if and only if the transposed graph is a functional graph. A property of such graphs is that their AF core is commutative (it is the so-called diagonal subalgebra).

There is another embedding of the form \eqref{eq:6b} which one can consider, and is the analogue of the vertical embedding $X\to X\times X$, $x\mapsto (p,x)$, where $p\in X$ is any fixed point. This embedding is defined and admissible for a class of graphs which includes the one of $\mathbb{S}^{2n-1}_q$, and induces a ring structure on the K-theory of $\C P^{n-1}_q$.
The bad news is that this ring structure is not what we want: it is far from \eqref{eq:TPol} for quantum projective spaces, so in particular it doesn't give a geometrical explanation to the origin of the Atiyah-Todd identity for $\C P^{n-1}_q$.
To understand what this ring structure is, one can think about the example of a vertical embedding of a compact Hausdorff space $X$. The induced map $\mu:K^*(X)\otimes K^*(X)\to K^*(X)$ can be easily computed and is given by $\mu([V_1],[V_2])=\mathrm{rk}(V_1)\cdot [V_2]$ for any pair of vector bundles $V_1$ and $V_2$ on $X$, where $\mathrm{rk}$ denotes the rank.
This map is independent of the choice of base point and is, in general, non-commutative.

\medskip

In the second part of the paper, we study a semiring $\mathsf{SPic}(A)$ of classes of invertible $A$-bimodules, that we interpret as describing some sort of ``noncommutative line bundle'' (see Sect.~\ref{sec:lineAF} for the exact definition). 
There is a natural homomorphism 
\begin{equation}\label{eq:SpicK0}
\mathsf{SPic}(A)\to K_0(A)
\end{equation}
of abelian semigroups. For $A=C^*(E)^{U(1)}$ we will show that, if $E$ is finite and the adjacency matrix $\Gamma$ of $E$ is invertible over the integers, then
\[
K_0\big(C^*(E)^{U(1)}\big)\cong\Z^{|E^0|}
\]
has a basis consisting of the classes of the vertex projections of $E$.
Under the same assumptions one can write down explicit identities, involving the K-theory classes of line bundles, that generalize the Atiyah-Todd identity of $\C P^{n-1}$
(Prop.~\ref{cor:AT}). This gives an independent proof of the identity \eqref{eq:AT1} for $\C P^{n-1}_q$, that immediately follows as an easy corollary.
We also show that, when an additional algebraic condition is satisfied by $\Gamma$,
then $K_0\big(C^*(E)^{U(1)}\big)$ is isomorphic to the abelian groups underlying the ring
\begin{equation}\label{eq:multmor}
\Z[\lambda]/\bigl(\det (\lambda\Gamma-1) \bigr) ,
\end{equation}
and the composition of this isomorphism with \eqref{eq:SpicK0} gives a homomorphism of semirings. A nice geometrical interpretation comes from KK-theory, where we can identify \eqref{eq:multmor} with a subring of $KK\big(C^*(E)^{U(1)},C^*(E)^{U(1)}\big)$ generated by the adjacenty matrix $\Gamma$. Examples include quantum complex projective spaces and the C*-algebra parameterizing Penrose tilings. The same theorem can be proved for the $UHF(n^\infty)$ algebra, 
the AF core of the Cuntz C*-algebra $\mathcal{O}_n$,
but it is proved separately since in this case the adjacency matrix is not invertible over the integers.
Incidentally, we notice that the C*-algebra parameterizing Penrose tilings is also the AF core of Cuntz C*-algebra $\mathcal{O}_2$, but for a different choice of directed graph.

\medskip

The paper is organized as follows.
In Sect.~\ref{sec:2} we recall some basic definitions and properties of graph C*-algebras and fix the notations that we use throughout the paper.
In Sect.~\ref{sec:3} we study the C*-algebra of a Cartesian product of graphs and prove \eqref{eq:7}.
In Sect.~\ref{sec:4} we discuss morphisms of graphs, review the notion of admissible morphism from \cite{HT24} and present some examples.
In Sect.~\ref{sec:5} we identify two classes of graphs, one for which the diagonal morphism is admissible, and another for which a vertical morphism is admissible. The latter class includes the graphs of odd-dimensional quantum spheres.
In Sect.~\ref{sec:lineAF} we introduce the semiring $\mathsf{SPic}(A)$ for $A=C^*(E)^{U(1)}$, find conditions that guarantee that $K_0(A)$ is generated by vertex projections of $E$, and prove some formulas relating line bundles and vertex projections.
In Sect.~\ref{sec:appli} we prove the Atiyah-Todd identities for a class of AF cores and discuss the ``multiplicative'' map \eqref{eq:multmor}.
In Sect.~\ref{sec:7} we comment on the ring $KK(A,A)$ for $A=C^*(E)^{U(1)}$.
In App.~\ref{sec:app} we collect some remarks about the Picard group of a noncommutative algebra, with a special care of finite-dimensional C*-algebras.

\bigskip

\begin{center}
\textsc{Acknowledgements}
\end{center}

\noindent
I would like to thank Giovanni Landi and Piotr M.~Hajac for their comments on a preliminary version of this paper.
This research is part of the EU Staff Exchange project 101086394 ``Operator Algebras That One Can See''. It was partially supported by the University of Warsaw Thematic Research Programme ``Quantum Symmetries''.
The Author is a member of INdAM-GNSAGA (Istituto Nazionale di Alta Matematica) -- Unit\`a di Napoli and of INFN -- Sezione di Napoli.

\medskip

\section{Generalities about graph C*-algebras}\label{sec:2}

Let us recall the definition and some basic properties of graph C*-algebras.
Our main references are \cite{BPRS00,FLR00,R05}. We adopt the conventions of \cite{BPRS00,FLR00}, i.e.~the roles of source and range maps are exchanged with respect to~\cite{R05}.

A directed graph $E=(E^0,E^1,s,r)$ consists of a set $E^0$ of vertices, a set $E^1$
of edges, and range and source maps $r,s:E^1\to E^0$. (Sometimes the word ``directed'' will be omitted, but by a graph in this paper we shall always mean a directed graph.)
If $e\in E^1$, $v:=s(e)$ and $w:=r(e)$ we say that $v$ \emph{emits} $e$ and that
$w$ \emph{receives} $e$.

A vertex $v\in E^0$ is called
\begin{itemize}
\item a \emph{sink} if it has no outgoing edges, i.e.~$s^{-1}(v)=\emptyset$,
\item an \emph{infinite emitter} if $|s^{-1}(v)|=\infty$,
\item a \emph{source} if it has no incoming edges, i.e.~$r^{-1}(v)=\emptyset$,
\item \emph{singular} if it is either a sink or an infinite emitter,
\item \emph{regular} if it is not singular, i.e.~if $s^{-1}(v)$ is a finite non-empty set.
\end{itemize}
We denote by $E^0_{\mathrm{reg}}$ the set of regular vertices.
A graph $E$ is called \emph{row-finite} if it has no infinite emitters, and \emph{finite} if both $E^0$ and $E^1$ are finite sets (thus, a finite graph is a row-finite graph with finitely many vertices).

A Cuntz-Krieger $E$-family for an arbitrary graph $E$ consists of mutually orthogonal projections $\{P_v:v\in E^0\}$ and partial isometries $\{S_e:e\in E^1\}$ with orthogonal ranges satisfying the following relations \cite{FLR00}
\begin{alignat}{4}
S^*_eS_e &= P_{r(e)} && \forall\;e\in E^1 \tag{CK1}\label{eq:CK1} \\
S_eS_e^* &\leq P_{s(e)} && \forall\;e\in E^1 \tag{CK2}\label{eq:CK2} \\
\sum_{e\in s^{-1}(v)}\!\! \!S_eS_e^* &=P_v && \forall\;v\in E^0_{\mathrm{reg}},  \tag{CK3}\label{eq:CK3}
\end{alignat}
Note that \eqref{eq:CK2} follows from \eqref{eq:CK3} if $s(e)$ is a regular vertex.

The graph C*-algebra $C^*(E)$ of a graph $E$ is defined as the universal C*-algebra generated by a Cuntz-Krieger $E$-family, and is unique up to an isomorphism which sends Cuntz-Krieger generators into Cuntz-Krieger generators, see \cite{BPRS00,FLR00,R05}.

\medskip

The \emph{gauge action} \mbox{$\gamma:U(1)\to\mathrm{Aut}\,C^*(E)$} is defined on generators by $\gamma_t(S_e)=t\,S_e$ and $\gamma_t(P_v)=P_v$ for all $t\in U(1),e\in E^1,v\in E^0$.
A subset $I\subset C^*(E)$ (e.g., a two-sided *-ideal) is called \emph{gauge-invariant} if $\gamma_t(I)\subset I\;\forall\;z\in U(1)$.
The gauge action defines a $\Z$-grading of $C^*(E)$: called
\begin{equation}\label{eq:Lk}
\mathcal{L}_k:=\big\{a\in C^*(E):\gamma_t(a)=t^ka\;\forall\;t\in U(1)\big\} ,
\end{equation}
then the (vector space) direct sum $\bigoplus_{k\in\Z}\mathcal{L}_k$ is a dense *-subalgebra of $C^*(E)$ which is $\Z$-graded in the sense that $\mathcal{L}_j\cdot\mathcal{L}_k\subseteq\mathcal{L}_{j+k}$ for all $j,k\in\Z$. The involution maps each $\mathcal{L}_k$ to $\mathcal{L}_{-k}$.
In terms of generators, the grading assigns to each $S_e$ degree $+1$, to $S_e^*$ degree $-1$ and to $P_v$ degree $0$.
The C*-subalgebra $\mathcal{L}_0=:C^*(E)^{U(1)}$ of gauge-invariant elements is called the \emph{core} of $C^*(E)$.

\medskip

A graph C*-algebra $C^*(E)$ is unital if and only if $E^0$ is finite, and in this case the unit is given by the element
\begin{equation}\label{eq:unitA}
1:=\sum_{v\in E^0}P_v .
\end{equation}
If $E$ is row-finite, then $C^*(E)^{U(1)}$ is an AF-algebra, cf.~eq.~(3.9) in \cite{R05}.
If $E$ is finite and without sinks, $C^*(E)$ is \emph{strongly} $\Z$-graded (cf.~\cite{Haz13,CHR19,LO22}), meaning that
\begin{equation}\label{eq:becauseof}
\mathcal{L}_j\cdot\mathcal{L}_k=\mathcal{L}_{j+k}\qquad\forall\;j,k\in\Z.
\end{equation}
The general theory of strongly $\Z$-graded algebras tells us that each $\mathcal{L}_k$ is a self-Morita equivalence $\mathcal{L}_0$-bimodule \cite{NvO82}, which we may interpret as a the bimodule of sections of a line bundle over the quantum space described by the C*-algebra $\mathcal{L}_0$.

Note that the multiplication map gives an isomorphism $\mathcal{L}_j\otimes_{\mathcal{L}_0}\mathcal{L}_k\to \mathcal{L}_j\cdot\mathcal{L}_k$ of $\mathcal{L}_0$-bimodules, hence \eqref{eq:becauseof} can be re-interpreted as
\begin{equation}\label{eq:Picsemiring}
\mathcal{L}_j\otimes_{\mathcal{L}_0}\mathcal{L}_k\cong\mathcal{L}_{j+k}\qquad\forall\;j,k\in\Z.
\end{equation}

\medskip

Recall that an \emph{undirected walk} of length $n\geq 0$ in $E$ is a sequence
\begin{equation}\label{eq:walk}
\alpha=v_0e_1v_1e_2v_2\cdots e_nv_n
\end{equation}
of vertices $v_i\in E^0$ and edges $e_i\in E^1$ such that, for all $1\leq i\leq n$, either $s(e_i)=v_{i-1}$ and $r(e_i)=v_i$, or $s(e_i)=v_i$ and $r(e_i)=v_{i-1}$.
If, for all $1\leq i\leq n$, one has $s(e_i)=v_{i-1}$ and $r(e_i)=v_i$, then $\alpha$ is called a
\emph{directed walk} of length $n$. For a directed walk \eqref{eq:walk} we define $s(\alpha):=v_0$ and $r(\alpha):=v_n$.

A directed walk \eqref{eq:walk} of length $\geq 1$ is called \emph{closed} if $v_0=v_n$, and a closed directed walk \eqref{eq:walk} is called a \emph{directed cycle} if $v_i\neq v_j$ for all $i,j\in\{1,\ldots,n\}$ such that $i\neq j$ (any closed directed walk of length $\geq 1$ is a concatenation of directed cycle). An edge $e$ such that $s(e)=r(e)$ is called a \emph{loop}.
Note that in the graph-algebra literature directed walks are called paths (while in the standard graph theory terminology a path is a walk with no repeated vertices),
closed directed walks are called loops, and directed cycles are called simple loops (this is a terminology borrowed from theoretical physics, where in addition loops are called \emph{bubbles}). \linebreak
In this paper we will stick to the standard graph theory terminology.

\smallskip

We denote by $E^\star$ the set of all finite directed walks in $E$ and,
for $\alpha\in E^\star$ as in \eqref{eq:walk} we define
\[
S_\alpha :=P_{v_0}S_{e_1}P_{v_1}S_{e_2}P_{v_2}\cdots S_{e_n}P_{v_n} .
\]
It follows from Cuntz-Krieger relations that $S_\alpha =S_{e_1}S_{e_2}\cdots S_{e_n}$
if $n\geq 1$.

The monomials
\[
\big\{ S_\alpha S^*_\beta :\alpha,\beta\in E^\star,r(\alpha)=r(\beta)\big\}.
\]
span a dense subalgebra $L_{\C}(E)\subset C^*(E)$ called the \emph{Leavitt path algebra} of $E$ (see \cite{AAM17}). Because of \eqref{eq:CK3}, these monomials are not linearly independent.
The core $C^*(E)^{U(1)}$ is the closure of the span of the monomials $S_\alpha S^*_\beta$ with $|\alpha|=|\beta|$ (here $|\alpha|$ denotes the length of $\alpha$).

From \cite[Lemma 1.2.12]{AAM17}:
\begin{equation}\label{eq:relations}
S^*_\beta S_\alpha=
\begin{cases}
S_\gamma & \text{if } \alpha=\beta\gamma , \\
S_\gamma^* & \text{if } \beta=\alpha\gamma , \\
0 & \text{otherwise} .
\end{cases}
\end{equation}
In particular, $\{S_\alpha S^*_\alpha:\alpha\in E^\star\}$ is a set of commuting projections.
These projections generate a commutative C*-subalgebra of $C^*(E)^{U(1)}$ called the \emph{diagonal subalgebra}.

\medskip

A subset $V\subset E^0$ is called \emph{hereditary} if, for every $e\in E^1$, $s(e)\in V$ implies $r(e)\in V$. Thus, $V$ is hereditary if every directed walk that starts in $V$ stays in $V$. A subset $V\subset E^0$ is called \emph{saturated} if, for every $v\in E^0$,
$r(s^{-1}(v))\subset V$ implies $v\in V$. Thus, $V$ is saturated if every vertex emitting only edges ending in $V$ belongs to $V$.

Let $I_V$ be the closed two-sided *-ideal in $C^*(E)$ generated by $\{P_v:v\in V\}$.
If $E$ is row-finite, then $I_V$ is a gauge-invariant ideal and $C^*(E)/I_V\cong C^*(E\smallsetminus V)$, where $E\smallsetminus V$ is the subgraph of $E$ obtained by removing all vertices in $V$ and all edges with range in $V$ (cf.~Thm.~4.1 in \cite{BPRS00}).
The situations is more complicated for graphs that are not row-finite, see e.g.~Thm.~5.13 in \cite{R05}.

\medskip

We give now a slight reformulation of \cite[Thm.~2.1]{BPRS00} that is more suitable for the purposes of this work (see also \cite[Thm.~2.2]{R05}).

\begin{thm}[Gauge-invariant Uniqueness Theorem]\label{thm:gut}
Let $E$ be a row-finite graph, let $A$ be a C*-algebra with a continuous action of $U(1)$ and $\psi:C^*(E)\to A$ a $U(1)$-equivariant *-homomorphism. If $\psi(P_v)\neq 0$ for all $v\in E^0$, then $\psi$ is injective.
\end{thm}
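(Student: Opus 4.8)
My plan is the standard two-step proof of a gauge-invariant uniqueness theorem. First I would reduce injectivity of $\psi$ to injectivity of its restriction to the core $C^*(E)^{U(1)}$ by means of the canonical faithful conditional expectation $\Phi\colon C^*(E)\to C^*(E)^{U(1)}$, $\Phi(a)=\int_{U(1)}\gamma_t(a)\,dt$ (the integral converges in norm because $t\mapsto\gamma_t(a)$ is continuous). Writing $\sigma\colon U(1)\to\mathrm{Aut}\,A$ for the given action, if $a\in\ker\psi$ then $\psi(a^*a)=0$, and $U(1)$-equivariance of $\psi$ together with its continuity let one pull $\psi$ through the integral:
\[
\psi\bigl(\Phi(a^*a)\bigr)=\int_{U(1)}\psi\bigl(\gamma_t(a^*a)\bigr)\,dt=\int_{U(1)}\sigma_t\bigl(\psi(a^*a)\bigr)\,dt=0 .
\]
Hence $\Phi(a^*a)\in\ker\psi\cap C^*(E)^{U(1)}$, so once $\psi$ is known to be injective on the core, faithfulness of $\Phi$ forces $a^*a=0$, i.e.\ $a=0$. (Observe that nothing about $A$ beyond continuity of $\sigma$ is used.)

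It remains to prove faithfulness on the core. Since $E$ is row-finite, $C^*(E)^{U(1)}$ is AF; concretely, it is the closure of the union of the subalgebras $\mathcal{F}_n$, where $\mathcal{F}_n$ is the closed linear span of the monomials $S_\alpha S_\beta^*$ with $|\alpha|=|\beta|=n$ and $r(\alpha)=r(\beta)$. Using \eqref{eq:relations} one checks that, at a fixed length, these monomials form a system of matrix units grouped according to their common range, so that $\mathcal{F}_n$ is a $c_0$-direct sum of elementary C*-algebras,
\[
\mathcal{F}_n\cong\bigoplus_{v\in E^0}\mathcal{K}\bigl(\ell^2(\{\alpha\in E^\star:|\alpha|=n,\ r(\alpha)=v\})\bigr),
\]
the $v$-th summand being generated by the $S_\alpha S_\beta^*$ with $r(\alpha)=r(\beta)=v$. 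Since an injective $*$-homomorphism of C*-algebras is isometric and $\bigcup_n\mathcal{F}_n$ is dense in the core (it already contains every monomial $S_\alpha S_\beta^*$ with $|\alpha|=|\beta|$), it suffices to show that $\psi|_{\mathcal{F}_n}$ is injective for every $n$; and a $*$-homomorphism out of a $c_0$-sum of simple C*-algebras is injective exactly when it does not annihilate a minimal projection of any summand. For the summand at $v$, any path $\alpha$ of length $n$ with $r(\alpha)=v$ provides such a minimal projection $S_\alpha S_\alpha^*$, and, using $S_\alpha^*S_\alpha=P_{r(\alpha)}$ (which follows from \eqref{eq:relations}) together with the hypothesis,
\[
\psi(S_\alpha)^*\psi(S_\alpha)=\psi\bigl(S_\alpha^*S_\alpha\bigr)=\psi\bigl(P_{r(\alpha)}\bigr)=\psi(P_v)\neq0 ,
\]
so $\psi(S_\alpha)\neq0$ and $\bigl\|\psi(S_\alpha S_\alpha^*)\bigr\|=\bigl\|\psi(S_\alpha)\bigr\|^2>0$. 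Therefore $\psi|_{\mathcal{F}_n}$ is injective, and combining this with the first step gives injectivity of $\psi$.

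The conceptual content here is minimal: it is carried entirely by the relation $S_\alpha^*S_\alpha=P_{r(\alpha)}$, which is what upgrades the a priori weak hypothesis ``$\psi(P_v)\neq0$'' to non-vanishing on every matrix unit of every building block of the core. I expect the only genuinely delicate point to be the structural description of $\mathcal{F}_n$ — checking that the length-graded monomials $S_\alpha S_\beta^*$ are linearly independent and assemble into a $c_0$-sum of compacts, and, should one prefer to realize the core literally as an inductive limit of the $\mathcal{F}_n$, refining the filtration so that paths terminating at sinks are retained — which is precisely the place where row-finiteness of $E$ is used.
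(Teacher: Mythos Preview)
The paper does not actually prove this theorem: it is stated as a slight reformulation of \cite[Thm.~2.1]{BPRS00} and cited without proof. Your argument is precisely the standard proof that appears in that reference (and in Raeburn's monograph), so there is nothing to compare at the level of strategy.

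The argument itself is sound, with one wrinkle you only half-address. Your claim that the set-theoretic union $\bigcup_n\mathcal{F}_n$ is dense in the core because ``it already contains every monomial $S_\alpha S_\beta^*$ with $|\alpha|=|\beta|$'' does not follow: density requires approximating \emph{linear combinations} of such monomials, and if $E$ has sinks then a sum like $P_v+S_eS_e^*$ with $v$ a sink lies in no single $\mathcal{F}_n$, since $P_v$ cannot be lengthened via \eqref{eq:CK3}. You are aware of this --- you mention ``refining the filtration so that paths terminating at sinks are retained'' --- but you frame it as optional (``should one prefer\ldots''), when in fact it is needed for the density step as written. The standard fix is exactly what you indicate: replace $\mathcal{F}_n$ by the span of those $S_\alpha S_\beta^*$ with $|\alpha|=|\beta|\leq n$ and either $|\alpha|=n$ or $r(\alpha)$ a sink; these subalgebras are genuinely nested, still decompose as $c_0$-sums of elementary algebras indexed by $(v,k)$ with $k=n$ or $v$ a sink, and your minimal-projection argument goes through unchanged on each summand. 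With that adjustment the proof is complete.
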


\section{Tensor products of AF-cores}\label{sec:3}

The Cartesian product \mbox{$E\times F$} of two graphs is defined in the obvious way: the vertex set is $E^0\times F^0$, the edge set is $E^1\times F^1$, and for every $e\in E^1$ and $f\in F^1$ the arrow $(e,f)$ has source $s(e,f):=(s(e),s(f))$ and range $r(e,f):=(r(e),r(f))$ (whatever the graph is, we will always use the same letters $s$ and $r$ to denote the source and range map).

\begin{prop}\label{prop:1}
Let $E$ and $F$ be two graphs. Then, a *-homomorphism $\psi:C^*(E\times F)\to C^*(E)\otimes C^*(F)$ is defined on generators by
\begin{equation}\label{eq:psi}
\psi(P_{v,w}) :=P_v\otimes P_w , \qquad
\psi(S_{e,f}) :=S_e\otimes S_f ,
\end{equation}
for all $v\in E^0$, $w\in F^0$, $e\in E^1$, $f\in F^1$.

If $E$ and $F$ are row-finite, or are such that every cycle has an exit, then $\psi$ is
injective.
\end{prop}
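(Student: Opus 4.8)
The plan is to verify the statement in two stages. First, to show that the formulas \eqref{eq:psi} do define a $*$-homomorphism, I would check that the images $\{P_v\otimes P_w : (v,w)\in E^0\times F^0\}$ and $\{S_e\otimes S_f : (e,f)\in E^1\times F^1\}$ form a Cuntz-Krieger $(E\times F)$-family inside $C^*(E)\otimes C^*(F)$; the universal property of $C^*(E\times F)$ then produces $\psi$. Mutual orthogonality of the projections $P_v\otimes P_w$ and orthogonality of the ranges of the partial isometries $S_e\otimes S_f$ are immediate from the corresponding relations in the two tensor factors. For \eqref{eq:CK1}, $(S_e\otimes S_f)^*(S_e\otimes S_f)=S_e^*S_e\otimes S_f^*S_f=P_{r(e)}\otimes P_{r(f)}=P_{r(e,f)}$, and similarly for \eqref{eq:CK2}. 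For \eqref{eq:CK3}, a vertex $(v,w)$ is regular in $E\times F$ iff both $v$ and $w$ are regular, and in that case $s^{-1}(v,w)=s^{-1}(v)\times s^{-1}(w)$, so
\[
\sum_{(e,f)\in s^{-1}(v,w)}\!\!\!(S_e\otimes S_f)(S_e\otimes S_f)^*
=\Bigl(\sum_{e\in s^{-1}(v)}\!\!S_eS_e^*\Bigr)\otimes\Bigl(\sum_{f\in s^{-1}(w)}\!\!S_fS_f^*\Bigr)
=P_v\otimes P_w=P_{v,w},
\]
which gives the required relation; one should remark that regularity of $(v,w)$ forcing regularity of each coordinate is exactly what makes the verification go through (for an irregular $(v,w)$ nothing is demanded). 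This proves the first assertion, which holds for arbitrary $E$ and $F$.

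For injectivity, I would invoke a uniqueness theorem for $C^*(E\times F)$. In the row-finite case, $\psi$ is manifestly $U(1)$-equivariant for the diagonal gauge action (each $S_{e,f}$ has degree $1$ and maps to $S_e\otimes S_f$, which also has degree $1$ under $\gamma\otimes\gamma$), so by the Gauge-invariant Uniqueness Theorem (Thm.~\ref{thm:gut}, applied to the graph $E\times F$) it suffices to check $\psi(P_{v,w})=P_v\otimes P_w\neq 0$ for every $(v,w)$. Since each $P_v$ is a nonzero projection in $C^*(E)$ and each $P_w$ is a nonzero projection in $C^*(F)$ (this follows from the existence of a nontrivial Cuntz-Krieger family, e.g.\ on $\ell^2$ of the path space), their tensor product is nonzero in the (minimal or maximal --- it does not matter here, as both contain the algebraic tensor product faithfully on the relevant elementary tensors) tensor product $C^*(E)\otimes C^*(F)$. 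Hence $\psi$ is injective.

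The ``every cycle has an exit'' case requires the Cuntz-Krieger Uniqueness Theorem instead of the gauge-invariant one, so I would first check that $E\times F$ satisfies condition (L): a cycle in $E\times F$ projects to a closed directed walk in each coordinate, and a short combinatorial argument shows that if every cycle of $E$ and every cycle of $F$ has an exit then every cycle of $E\times F$ has an exit as well. Granting condition (L), the Cuntz-Krieger Uniqueness Theorem says any $*$-homomorphism out of $C^*(E\times F)$ that is nonzero on every $P_{v,w}$ is injective, and the nonvanishing of $P_v\otimes P_w$ is argued exactly as above. The main obstacle I anticipate is this last combinatorial point --- verifying that the exit hypothesis is inherited by the product graph --- together with being careful that ``cycle has an exit'' is the correct hypothesis to quote (as opposed to, say, aperiodicity), and that the coordinate projections of a cycle, which need not themselves be cycles but only closed walks, still carry an exit; writing a clean one-line reduction here is the part that needs the most attention.
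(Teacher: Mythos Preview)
Your proposal is correct and follows essentially the same route as the paper: verify that the images form a Cuntz--Krieger $(E\times F)$-family to get $\psi$, then invoke the gauge-invariant (resp.\ Cuntz--Krieger) uniqueness theorem for injectivity. Two small remarks. First, your ``similarly for \eqref{eq:CK2}'' is a little quick: unlike \eqref{eq:CK1} it is not a one-line computation, and the paper writes out the decomposition $P_{s(e)}\otimes P_{s(f)}-S_eS_e^*\otimes S_fS_f^*=(P_{s(e)}-S_eS_e^*)\otimes P_{s(f)}+S_eS_e^*\otimes(P_{s(f)}-S_fS_f^*)\geq 0$ to get the inequality. Second, the paper uses the one-sided action $\gamma_z\otimes\id$ on the codomain rather than the diagonal one (either works for Thm.~\ref{thm:gut}), and for the ``every cycle has an exit'' case it simply cites \cite[Thm.~2]{FLR00} without spelling out that $E\times F$ inherits condition (L); your instinct that this step needs a short combinatorial check is correct, and your sketch (if $c$ had no exit in $E\times F$ then each $(v_i,w_i)$ emits a unique edge, forcing the $E$-projection to be a power of a single exitless cycle) is the right one.
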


\begin{proof}
We now show that the formulas above defines a (unique) *-homomorphism $\psi$ by checking that the elements $\{\psi(P_{v,w}),\psi(S_{e,f})\}$ form a Cuntz-Krieger
$E\times F$-family.
For all $(e,f)\in E^1\times F^1$, one has
\[
\psi(S_{e,f})^*\psi(S_{e,f})=
S_e^*S_e\otimes S_f^*S_f=P_{r(e)}\otimes P_{r(f)}=\psi(P_{r(e),r(f)}) ,
\]
thus proving \eqref{eq:CK1}.
Next, if $(v,w)$ is a regular vertex, that means that $s^{-1}(v,w)=s^{-1}(v)\times s^{-1}(w)$ is finite and non-empty, then both $v$ and $w$ are regular vertices, and we have
\[
\sum_{(e,f)\in s^{-1}(v,w)}\psi(S_{e,f})\psi(S_{e,f})^*=
\sum_{e\in s^{-1}(v)}S_eS_e^*\otimes \sum_{f\in s^{-1}(w)}S_fS_f^*=P_{v}\otimes P_{w}=\psi(P_{v,w}) .
\]
This proves \eqref{eq:CK3}.
Finally, for all $e\in E^1$ and $f\in F^1$, one has $S_eS_e^*\leq P_{s(e)}$, $S_fS_f^*\leq P_{s(f)}$. Since the tensor product of two positive operators is positive, for all $(e,f)\in E^1\times F^1$ one has
\[
P_{s(e)}\otimes P_{s(f)}-S_eS_e^*\otimes S_fS_f^*
=(P_{s(e)}-S_eS_e^*)\otimes P_{s(f)}
+S_eS_e^*\otimes (P_{s(f)}-S_fS_f^*) \geq 0 ,
\]
hence
\[
\psi(S_{e,f})\psi(S_{e,f}^*)=S_eS_e^*\otimes S_fS_f^*\leq 
P_{s(e)}\otimes P_{s(f)}=\psi(P_{s(e,f)}) .
\]
This proves \eqref{eq:CK2}.

Since all the Cuntz-Krieger relations are satisfied, the *-homomorphism $\psi$ exists (and is unique). The map $\psi$ is $U(1)$-equivariant if on the codomain we consider the $U(1)$ gauge action only on the first factor (or only on the second factor):
\begin{equation}\label{eq:gauge}
\psi\circ\gamma_z=(\gamma_z\otimes\id)\circ\psi=(\id\otimes\gamma_z)\circ\psi , \qquad
\forall\;z\in U(1).
\end{equation}
If $E$ and $F$ are row-finite (that is: $E\times F$ is row-finite), since $\psi(P_{v,w})\neq 0$ for all $v,w$, from Thm.~\ref{thm:gut} it follows that $\psi$ is injective.
If $E$ and $F$ are not row-finite, but they are such that every cycle has an exit, then it follows from Thm.~2 in \cite{FLR00} that $\psi$ is injective.
\end{proof}

Prop.~\ref{prop:1} already appeared in \cite{Cha10} under the hypothesis that the graphs are (1) row-finite, (2) such that every cycle has an exit, and (3) with no sinks. But in fact the hypothesis (3) is not necessary, and only one between (1) and (2) is needed for injectivity. A similar result also appeared in \cite{IPL16} in the context of ultragraph C*-algebras.

\begin{lemma}\label{lemma:plusone}
Let $E$ be a row-finite graph with no sinks, and let $k\geq 0$. Then, every monomial $S_\alpha S_\beta^*$ can be written as a sum of elements $S_{\alpha'}S_{\beta'}^*$ with
$|\alpha'|=|\alpha|+k$ and $|\beta'|=|\beta|+k$.
\end{lemma}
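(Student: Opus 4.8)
The plan is to reduce everything to a single ``expansion step'': show that for a row-finite graph with no sinks, each monomial $S_\alpha S_\beta^*$ can be rewritten as a finite sum of monomials $S_{\alpha'}S_{\beta'}^*$ with $|\alpha'|=|\alpha|+1$ and $|\beta'|=|\beta|+1$; the general statement then follows by iterating $k$ times. The key observation is that if $E$ has no sinks, then for the common range vertex $v:=r(\alpha)=r(\beta)$ the set $s^{-1}(v)$ is finite and \emph{non-empty}, so $v$ is regular, and the Cuntz--Krieger relation \eqref{eq:CK3} gives $P_v=\sum_{e\in s^{-1}(v)}S_eS_e^*$.

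The main step, then, is the computation
\[
S_\alpha S_\beta^* = S_\alpha P_v S_\beta^*
= S_\alpha\Bigl(\sum_{e\in s^{-1}(v)}S_eS_e^*\Bigr)S_\beta^*
= \sum_{e\in s^{-1}(v)} (S_\alpha S_e)(S_\beta S_e)^*
= \sum_{e\in s^{-1}(v)} S_{\alpha e}S_{\beta e}^* ,
\]
where $\alpha e$ denotes the directed walk obtained by appending the edge $e$ to $\alpha$ (this is a legitimate directed walk precisely because $s(e)=v=r(\alpha)$, and likewise for $\beta e$), and $S_{\alpha e}=S_\alpha S_e$ by the definition of $S_\alpha$ together with \eqref{eq:relations}. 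Each summand now has $|\alpha e|=|\alpha|+1$ and $|\beta e|=|\beta|+1$, and $r(\alpha e)=r(e)=r(\beta e)$, so each is again a legitimate core monomial. Note one has to be slightly careful with the degenerate case $|\alpha|=0$ (so $S_\alpha=P_v$) and with the fact that $S_\alpha S_\beta^*=0$ unless $r(\alpha)=r(\beta)$, but both are handled by the same identity, reading $P_v=S_\alpha S_\beta^*$ when $\alpha=\beta$ is the trivial walk at $v$ and $0$ otherwise.

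For the general $k$, I would induct on $k$: the case $k=0$ is trivial, and for the inductive step one applies the one-step expansion above to each of the finitely many monomials $S_{\alpha'}S_{\beta'}^*$ produced after $k-1$ steps, noting that each such monomial again has equal-length legs with a common (hence regular, since $E$ has no sinks) range vertex, so the hypothesis applies uniformly. Since $E$ is row-finite the sums are finite at every stage, so the final expression is a finite sum. I do not expect a real obstacle here; the only thing to be careful about is bookkeeping of the degenerate length-zero walks and ensuring that ``appending an edge'' is always well-defined, which is exactly where the no-sinks hypothesis is used.
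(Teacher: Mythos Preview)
Your proposal is correct and follows essentially the same approach as the paper: reduce to the case $k=1$, use that $v=r(\alpha)$ is regular (since $E$ is row-finite with no sinks), and expand $S_\alpha S_\beta^*=S_\alpha P_v S_\beta^*$ via \eqref{eq:CK3}. The paper's proof is slightly terser---it omits the explicit induction on $k$ and the discussion of degenerate cases---but the argument is the same.
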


\begin{proof}
It is enough to prove the Lemma when $k=1$. Let $v:=r(\alpha)$ and notice that, by assumption, $v\in E_{\mathrm{reg}}^0$. 
From \eqref{eq:relations}, $S_\alpha P_v S_\beta^*=S_\alpha S_\beta^*$. Using \eqref{eq:CK3}:
\[
S_\alpha S_\beta^*=S_\alpha P_v S_\beta^*=\sum_{e\in s^{-1}(v)}S_\alpha S_eS_e^*S_\beta^* ,
\]
which is what we wanted to prove.
\end{proof}

\begin{prop}\label{prop:2}
Let $E$ and $F$ be row-finite graphs with no sinks. Then, map $\psi$ in Prop.~\ref{prop:1} restricts to an isomorphism
\[
C^*(E)^{U(1)}\otimes C^*(F)^{U(1)} \cong C^*(E\times F)^{U(1)} .
\]
\end{prop}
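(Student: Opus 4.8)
The plan is to show that $\psi$ maps $C^*(E)^{U(1)}\otimes C^*(F)^{U(1)}$ onto $C^*(E\times F)^{U(1)}$ and that this restriction is injective; injectivity will in fact be automatic from Prop.~\ref{prop:1}, so the real work is surjectivity onto the core, together with checking that $\psi$ lands inside the tensor product of the cores on the right sort of elements. First I would recall that, for a row-finite graph $G$, the core $C^*(G)^{U(1)}$ is the closed span of the monomials $S_\mu S_\nu^*$ with $|\mu|=|\nu|$. Applying this to $G=E$, $G=F$, and $G=E\times F$, I get three explicit dense $*$-subalgebras to compare, so it suffices to match them up at the level of these spanning monomials and then pass to closures (the maps involved are $*$-homomorphisms, hence continuous, and $\psi$ is isometric onto its image by Prop.~\ref{prop:1}).

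Next I would analyze how $\psi$ interacts with these spanning sets. A monomial in $C^*(E\times F)$ has the form $S_{\alpha}S_{\beta}^*$ where $\alpha=(a_1,b_1)\cdots(a_n,b_n)$ and $\beta=(a_1',b_1')\cdots(a_m',b_m')$ are directed walks in $E\times F$; writing $\alpha_E:=a_1\cdots a_n$, $\alpha_F:=b_1\cdots b_n$ and similarly for $\beta$, and using $\psi(S_{(e,f)})=S_e\otimes S_f$ together with multiplicativity, one gets $\psi(S_\alpha S_\beta^*)=(S_{\alpha_E}S_{\beta_E}^*)\otimes(S_{\alpha_F}S_{\beta_F}^*)$. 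When $|\alpha|=|\beta|=n$, these four walks satisfy $|\alpha_E|=|\beta_E|=|\alpha_F|=|\beta_F|=n$, so $\psi$ carries core monomials into $C^*(E)^{U(1)}\otimes C^*(F)^{U(1)}$; taking closures shows $\psi\big(C^*(E)^{U(1)}\otimes C^*(F)^{U(1)}\big)\subseteq C^*(E\times F)^{U(1)}$. Combined with injectivity, it only remains to prove surjectivity, i.e. that every elementary tensor $(S_\mu S_\nu^*)\otimes(S_\sigma S_\tau^*)$ with $|\mu|=|\nu|$ and $|\sigma|=|\tau|$ lies in the image.

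The obstacle here — and the step I expect to be the crux — is that a generic such elementary tensor has $|\mu|=|\nu|=p$ and $|\sigma|=|\tau|=q$ with $p\neq q$, so it is not literally of the form $\psi(S_\alpha S_\beta^*)$ for a single product walk in $E\times F$ (which would force equal lengths in both coordinates). This is precisely where Lemma~\ref{lemma:plusone} enters: assuming WLOG $p\le q$, I apply the Lemma with $k=q-p$ to rewrite $S_\mu S_\nu^*=\sum_i S_{\mu_i}S_{\nu_i}^*$ with $|\mu_i|=|\nu_i|=q$, so that $(S_\mu S_\nu^*)\otimes(S_\sigma S_\tau^*)=\sum_i(S_{\mu_i}S_{\nu_i}^*)\otimes(S_\sigma S_\tau^*)$, and each summand now has matched lengths $q$ in both factors. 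Then I pair up the walks: $\alpha^{(i)}:=(\mu_i,\sigma)$ (the walk in $E\times F$ whose $j$-th edge is the pair of the $j$-th edges of $\mu_i$ and $\sigma$ — this is a legitimate directed walk in $E\times F$ exactly because both have length $q$) and $\beta^{(i)}:=(\nu_i,\tau)$, giving $(S_{\mu_i}S_{\nu_i}^*)\otimes(S_\sigma S_\tau^*)=\psi(S_{\alpha^{(i)}}S_{\beta^{(i)}}^*)$. Hence the elementary tensor is in the image of $\psi$ restricted to the algebraic core. Since these elementary tensors span a dense subalgebra of $C^*(E)^{U(1)}\otimes C^*(F)^{U(1)}$ and their images span a dense subalgebra of $C^*(E\times F)^{U(1)}$, continuity (isometry of $\psi$ onto its image, closedness of the image of a $C^*$-morphism) upgrades this to surjectivity of the restriction onto $C^*(E\times F)^{U(1)}$, completing the proof. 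A minor bookkeeping point to be careful about is that when $p=0$ or $q=0$ the "walk" is just a vertex and $S_\mu$ should be read as a vertex projection $P_{r(\mu)}$; the no-sinks hypothesis guarantees the regular-vertex condition needed to invoke Lemma~\ref{lemma:plusone}, and the length-zero cases are handled the same way since $C^*(G)^{U(1)}$ still contains the $P_v$.
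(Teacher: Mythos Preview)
Your argument is essentially the paper's proof: reduce to spanning monomials, use Lemma~\ref{lemma:plusone} to equalize the two lengths, then pair up the walks into a product walk in $E\times F$. One slip to fix: you wrote $\psi\big(C^*(E)^{U(1)}\otimes C^*(F)^{U(1)}\big)\subseteq C^*(E\times F)^{U(1)}$, but $\psi$ goes the other way, so this should read $\psi\big(C^*(E\times F)^{U(1)}\big)\subseteq C^*(E)^{U(1)}\otimes C^*(F)^{U(1)}$; the surrounding text already has the correct direction, so this is just a typo.
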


\begin{proof}
It follows from \eqref{eq:gauge} that $\psi$ maps $C^*(E\times F)^{U(1)}$ into the subalgebra of the codomain consisting of $U(1)\times U(1)$ invariant elements,
\begin{equation}\label{eq:induced}
\psi:C^*(E\times F)^{U(1)}\to C^*(E)^{U(1)}\otimes C^*(F)^{U(1)} .
\end{equation}
It follows from Prop.~\ref{prop:1} that the map \eqref{eq:induced} is injective. We must show that it is also surjective.

The codomain of \eqref{eq:induced} is spanned by elements of the form
\[
a:=S_\alpha S_\beta^*\otimes S_\mu S_\nu^*
\]
with $|\alpha|=|\beta|=:n$ and $|\mu|=|\nu|=:k$. We must show that $a$ is in the image of $\psi$.

Without loss of generality we can assume that $k=n\geq 1$ (we can use Lemma \ref{lemma:plusone} to increase the lengths of either $\alpha$ and $\beta$, $\mu$ and $\nu$, or all of them).
For $k=n$, we have
\[
a=
(S_{\alpha_1}\otimes S_{\mu_1})
(S_{\alpha_2}\otimes S_{\mu_2})
\cdots
(S_{\alpha_n}\otimes S_{\mu_n})
(S_{\beta_n}^*\otimes S_{\nu_n}^*)
\cdots
(S_{\beta_2}^*\otimes S_{\nu_2}^*)
(S_{\beta_1}^*\otimes S_{\nu_1}^*)
\]
where $\alpha_1,\alpha_2,\ldots\in E^1$, $\alpha=\alpha_1\alpha_2\ldots\alpha_n$, etc.

Each parenthesis in the above equality is an element in the image of $\psi$.
\end{proof}

As a special case of Prop.~\ref{prop:1} and Prop.~\ref{prop:2}, we find a well-known property of Cuntz algebras.

\begin{ex}[Cuntz algebra \protect{\cite{Cun81}}]\label{ex:Cuntz}
Let $n\geq 1$ and $B_n$ be the graph of the Cuntz algebra $\mathcal{O}_n$,
\pagebreak
given by one vertex with $n$ loops (the \emph{bouquet} graph):
\begin{center}
\begin{tikzpicture}

\clip (-1.6,-1.3) rectangle (1.6,1.55);

\node[main node] (1) {};

\filldraw (1) circle (0.06);

\path[freccia] (1) edge[ciclo,out=30, in=0, shorten <=3pt, shorten >=3pt] (1);
\path[freccia] (1) edge[ciclo,out=80, in=50, shorten <=3pt, shorten >=3pt] (1);
\path[freccia] (1) edge[ciclo,out=130, in=100, shorten <=3pt, shorten >=3pt] (1);
\path[freccia] (1) edge[ciclo,out=180, in=150, shorten <=3pt, shorten >=3pt] (1);
\path[freccia] (1) edge[ciclo,out=230, in=200, shorten <=3pt, shorten >=3pt] (1);
\path[freccia] (1) edge[ciclo,out=340, in=310, shorten <=3pt, shorten >=3pt] (1);

\draw[dashed] (235:1.2) arc (235:307:1.2);

\end{tikzpicture}
\end{center}
Note that $B_m\times B_n\cong B_{m\cdot n}$. By Prop.~\ref{prop:1} we have an injective *-homomorphism
\(
\mathcal{O}_{m\cdot n}\to\mathcal{O}_m\otimes\mathcal{O}_n
\).
By Prop.~\ref{prop:2}, recalling that the AF-core of $\mathcal{O}_n$ is $UHF(n^\infty)$,
we get the isomorphism
\[
UHF( (m\cdot n)^\infty) \cong
UHF(m^\infty)\otimes UHF(n^\infty) .
\]
\end{ex}

\section{A contravariant functoriality}\label{sec:4}

The purpose of this section is to review the notion of ``$\cup$-admissible'' morphisms of \cite{HT24}. These are special cases of more general ``admissible'' morphisms forming a category that is denoted CRTBPOG in \cite{HT24}.
However, since we don't need the more general notion, in an attempt to make this paper as much self-contained as possible
we streamline the definition and proofs of the properties of $\cup$-admissible morphisms, which in this paper will be simply called admissible.

\smallskip

Firstly, a morphism $\phi:E\to F$ of graphs is a pair of maps $\phi_0:E^0\to F^0$ and $\phi_1:E^1\to F^1$ satisfying $\phi_0\circ s=s\circ\phi_1$ and 
$\phi_0\circ r=r\circ\phi_1$.
If $\phi$ is injective (i.e.~both $\phi_0$ and $\phi_1$ are injective maps), we can identify $E$ with a subgraph of $F$. In the following (and in fact also in Example \ref{ex:Cuntz}), we tacitly use the fact that isomorphic graphs have isomorphic graph C*-algebras.

An \emph{induced subgraph} is a subgraph $E\subset F$ such that all edges of $F$ with source and range in $E$ belong to $E$, in formulas:
\[
E^1=\big\{f\in F^1:s(f)\in E^0,r(f)\in E^0\big\}
\]
(we say that $E$ is \emph{induced} by its vertex set). Note that $E^1$ is always contained in the set on the right hand side of the above equality, and only the opposite inclusion is non-trivial.

\begin{prop}\label{lemma:41}
If $E$ is an induced subgraph of a row-finite graph $F$ such that $F^0\smallsetminus E^0$ is hereditary and saturated, then there is a $U(1)$-equivariant surjective *-homomorphism $C^*(F)\to C^*(E)$.
\end{prop}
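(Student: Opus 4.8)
The plan is to construct the *-homomorphism $C^*(F)\to C^*(E)$ by exhibiting it as a quotient map, using the ideal structure described earlier in the excerpt. Set $V:=F^0\smallsetminus E^0$. By hypothesis $V$ is hereditary and saturated, and $F$ is row-finite, so the remark preceding Theorem~\ref{thm:gut} applies: the closed two-sided *-ideal $I_V$ generated by $\{P_v:v\in V\}$ is gauge-invariant, and $C^*(F)/I_V\cong C^*(F\smallsetminus V)$, where $F\smallsetminus V$ is obtained from $F$ by deleting all vertices in $V$ together with all edges with range in $V$.

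The main point then is to identify $F\smallsetminus V$ with $E$. Its vertex set is $F^0\smallsetminus V=E^0$, as desired. Its edge set is $\{f\in F^1: s(f)\notin V,\ r(f)\notin V\}=\{f\in F^1:s(f)\in E^0,\ r(f)\in E^0\}$, since an edge survives the deletion precisely when neither its source nor its range lies in $V$ (a priori one only removes edges with range in $V$, but because $V$ is hereditary, $s(f)\in V$ would force $r(f)\in V$, so the two conditions coincide). Because $E$ is an \emph{induced} subgraph of $F$, this last set is exactly $E^1$, and the source and range maps of $F\smallsetminus V$ restrict those of $F$, hence agree with those of $E$. Therefore $F\smallsetminus V=E$ as directed graphs, and the quotient map
\[
q:C^*(F)\longrightarrow C^*(F)/I_V\cong C^*(F\smallsetminus V)=C^*(E)
\]
is the sought-after surjective *-homomorphism. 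On generators it sends $P_v\mapsto P_v$ for $v\in E^0$, $P_v\mapsto 0$ for $v\in V$, $S_f\mapsto S_f$ for $f\in E^1$, and $S_f\mapsto 0$ for the remaining $f\in F^1$.

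Finally, $U(1)$-equivariance is immediate: the ideal $I_V$ is gauge-invariant, so the gauge action of $F$ descends to the quotient, and under the isomorphism $C^*(F)/I_V\cong C^*(E)$ it corresponds to the gauge action of $E$ (both fix the $P_v$ and scale the $S_f$ by $t$); hence $q\circ\gamma^F_t=\gamma^E_t\circ q$ for all $t\in U(1)$. I do not expect a serious obstacle here: the only place requiring care is the verification that $F\smallsetminus V=E$ uses \emph{both} that $V$ is hereditary (to see that deleting edges into $V$ deletes exactly the edges touching $V$) and that $E$ is induced (to see that no edges of $F$ between $E$-vertices are missing from $E$); saturation of $V$ is what guarantees, via the cited theorem, that $I_V$ is gauge-invariant and the quotient is the full graph C*-algebra of $F\smallsetminus V$ rather than something larger. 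One could alternatively invoke Theorem~\ref{thm:gut} to check injectivity of the induced map $C^*(F\smallsetminus V)\to C^*(F)/I_V$, but this is already packaged in the quoted consequence of \cite[Thm.~4.1]{BPRS00}.
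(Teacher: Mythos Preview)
Your proof is correct and follows essentially the same route as the paper's: set $V=F^0\smallsetminus E^0$, invoke the quoted result (from \cite[Thm.~4.1]{BPRS00}) that $C^*(F)/I_V\cong C^*(F\smallsetminus V)$, and then verify $F\smallsetminus V=E$ using heredity of $V$ and the induced-subgraph hypothesis. You add a bit more detail (the explicit description on generators and the spelled-out equivariance), but the argument is the same.
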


\begin{proof}
Since $V:=F^0\smallsetminus E^0$ is hereditary and saturated, we have a $U(1)$-equivariant surjective *-homomorphism $C^*(F)\to C^*(F\smallsetminus V)$. We now show that $F\smallsetminus V=E$.
Clearly $(F\smallsetminus V)^0=E^0$. Recall that $(F\smallsetminus V)^1$ is the set of all edges whose range is not in $V$. But $V$ is hereditary: if $r(f)\notin V$ then also $s(f)\notin V$. Thus, $(F\smallsetminus V)^1$ is the set of all edges $f\in F^1$ with both source and range in $E^0$, which means $f\in E^1$ since $E$ is an induced subgraph. Thus, $(F\smallsetminus V)^1=E^1$.
\end{proof}

For an induced subgraph $E\subset F$, the condition that $F^0\smallsetminus E^0$ is hereditary is equivalent to the condition that, for all $f\in F^1$:
\begin{equation}\label{eq:herbis}
r(f)\in E^0 \Longrightarrow f\in E^1 .
\end{equation}
The condition that $F^0\smallsetminus E^0$ is saturated is equivalent to the condition that
\begin{equation}\label{eq:satbis}
\text{if $v\in E^0$ is not a sink in $F$, then $\exists\;e\in E^1$ such that $s(e)=v$}
\end{equation}
(in other words, if $v\in E^0$ is not a sink in $F$, it is also not a sink in $E$).
Note that \eqref{eq:herbis} \emph{implies} that $E$ is an induced subgraph.
In view of Prop.~\ref{lemma:41}, we then give the following definition.

\begin{df}[\protect{\cite{CHT21,HRT20,HT24}}]\label{def:adm}
A morphism $\phi:G\to F$ of directed graphs is called
\emph{admissible} if:
\begin{enumerate}
\item $\phi$ is injective,
\item $E:=\phi(G)$ satisfies \eqref{eq:herbis} and \eqref{eq:satbis}.
\end{enumerate}
\end{df}

This is a exactly Definition 3.1 in \cite{HT24}, and was also considered in \cite{HRT20} and \cite{CHT21}.
With the above definition, we can rephrase Prop.~\ref{lemma:41} by saying that:

\begin{cor}\label{cor:1}
Every admissible morphism $\phi:G\to F$ between row-finite graphs induces a $U(1)$-equivariant surjective *-homomorphism $C^*(\phi):C^*(F)\to C^*(G)$. This is explicitly given on generators,
with a slight abuse of notations, by the map
\begin{align*}
S_f &\mapsto S_e \quad\text{if }f=\phi(e), &
S_f &\mapsto 0 \quad\text{otherwise}, \\
P_w &\mapsto P_v \quad\text{if }w=\phi(v), &
P_w &\mapsto 0 \quad\text{otherwise},
\end{align*}
for all $w\in F^0$ and $f\in F^1$.
\end{cor}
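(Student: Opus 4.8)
The plan is to obtain this as an immediate consequence of Proposition~\ref{lemma:41}, after identifying $G$ with an induced subgraph of $F$ and then tracking what the resulting quotient map does to the Cuntz--Krieger generators.

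First I would use injectivity of $\phi$ to identify $G$ with $E := \phi(G)$: this gives a canonical graph isomorphism $G \xrightarrow{\sim} E$, hence a canonical isomorphism $\iota : C^*(E) \xrightarrow{\sim} C^*(G)$ carrying $S_{\phi(e)} \mapsto S_e$ and $P_{\phi(v)} \mapsto P_v$. By admissibility, $E$ satisfies \eqref{eq:herbis} and \eqref{eq:satbis}; as recorded just before Definition~\ref{def:adm}, \eqref{eq:herbis} forces $E$ to be an induced subgraph of $F$, and the two conditions together say exactly that $V := F^0 \smallsetminus E^0$ is hereditary and saturated. Since $F$ is row-finite, Proposition~\ref{lemma:41} then supplies a $U(1)$-equivariant surjective *-homomorphism $q : C^*(F) \to C^*(E)$, and I would set $C^*(\phi) := \iota \circ q$; being the composition of a surjection with an isomorphism, both $U(1)$-equivariant, it has the asserted properties.

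It then remains to verify the explicit formula on generators, which is where the (modest) work lies. By its construction in the proof of Proposition~\ref{lemma:41}, $q$ is the quotient map $C^*(F) \to C^*(F)/I_V \cong C^*(F \smallsetminus V)$, with $I_V$ generated by $\{P_w : w \in V\}$. So $q(P_w) = P_w$ for $w \in E^0$, $q(P_w) = 0$ for $w \in V$, and $q(S_f) = S_f$ for $f \in E^1$. The only thing to check is that $q(S_f) = 0$ for every $f \in F^1 \smallsetminus E^1$: since $E$ is induced, such an $f$ has an endpoint in $V$, and hereditariness of $V$ upgrades this to $r(f) \in V$, whence $S_f^* S_f = P_{r(f)} \in I_V$ and therefore $S_f \in I_V$. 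Composing all these identities with $\iota$, which undoes $\phi$ on generators, yields precisely the formulas in the statement.

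I expect this last implication --- invoking hereditariness of $V$ to exclude the configuration $s(f) \in V$, $r(f) \notin V$ --- to be the only genuine point in the argument; the rest is a formal consequence of Proposition~\ref{lemma:41} together with the already-noted equivalence between \eqref{eq:herbis}--\eqref{eq:satbis} and "$F^0 \smallsetminus E^0$ hereditary and saturated".
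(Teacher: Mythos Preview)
Your proposal is correct and follows the same route as the paper, which simply introduces the corollary as a rephrasing of Proposition~\ref{lemma:41} without further proof. You supply the extra bookkeeping the paper leaves implicit: composing the quotient $C^*(F)\to C^*(F)/I_V\cong C^*(E)$ with the isomorphism $C^*(E)\cong C^*(G)$ induced by $\phi$, and checking that generators go where claimed --- in particular the observation that $f\in F^1\smallsetminus E^1$ forces $r(f)\in V$ via hereditariness, so that $S_f^*S_f=P_{r(f)}\in I_V$ and hence $S_f\in I_V$.
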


For the sake of completeness, let us show that we get a category and a functor.
This category is the subcategory of CRTBPOG formed by row-finite graphs and injective morphisms (cf. Prop.~3.4 \cite{HT24}), and the functor is a restriction of the one in \cite[Cor.~4.8]{HT24}.

\begin{prop}
Row-finite graphs with admissible morphisms form a category.
We denote this category by $\mathsf{AdGrph}$.
\end{prop}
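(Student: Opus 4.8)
The plan is to realize $\mathsf{AdGrph}$ as a (non-full) subcategory of the category $\mathsf{Grph}$ of directed graphs and graph morphisms in the sense of Section~\ref{sec:4}. Composition of graph morphisms is the pairwise composition of the vertex and edge maps, which is again compatible with $s$ and $r$, so associativity and the unit laws come for free from the category of sets. Hence the only things to verify are: for every row-finite graph $E$, the identity $\id_E$ is admissible; and the composite of two admissible morphisms between row-finite graphs is admissible.

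That $\id_E$ is admissible is immediate: it is injective, its image is $E$ itself, condition \eqref{eq:herbis} for the (trivially induced) subgraph $E\subseteq E$ is vacuous, and \eqref{eq:satbis} holds by the very definition of a sink.

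For closure under composition, let $\phi\colon G\to F$ and $\psi\colon F\to H$ be admissible, and set $E:=\phi(G)\subseteq F$, $\hat E:=\psi(F)\subseteq H$, and $D:=\psi(E)=(\psi\circ\phi)(G)\subseteq H$. The composite $\psi\circ\phi$ is injective, and $D^0=\psi(E^0)\subseteq\psi(F^0)=\hat E^0$, $D^1=\psi(E^1)\subseteq\psi(F^1)=\hat E^1$. I will check \eqref{eq:herbis} for $D$ in $H$: if $h\in H^1$ with $r(h)\in D^0\subseteq\hat E^0$, then \eqref{eq:herbis} for $\hat E$ gives $h=\psi(f)$ for some $f\in F^1$; since $\psi(r(f))=r(h)\in\psi(E^0)$ and $\psi$ is injective, $r(f)\in E^0$, so \eqref{eq:herbis} for $E$ gives $f\in E^1$ and hence $h=\psi(f)\in D^1$. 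Then I check \eqref{eq:satbis} for $D$ in $H$: let $w=\psi(v)\in D^0$ with $v\in E^0$ be a vertex that is not a sink in $H$; \eqref{eq:satbis} for $\hat E$ yields $h=\psi(f)\in\hat E^1$ with $s(h)=w$, so $\psi(s(f))=\psi(v)$ forces $s(f)=v$ by injectivity, i.e.\ $v$ is not a sink in $F$; then \eqref{eq:satbis} for $E$ gives $e\in E^1$ with $s(e)=v$, and $\psi(e)\in D^1$ has $s(\psi(e))=\psi(v)=w$. This shows $\psi\circ\phi$ is admissible.

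The argument is essentially a diagram chase and I foresee no real difficulty; the one point needing care is the \eqref{eq:satbis} clause for composites, where injectivity of $\psi$ is precisely what lets one transport the hypothesis ``$w$ is not a sink in $H$'' back to ``$v$ is not a sink in $F$'', so that \eqref{eq:satbis} for the inner inclusion $E\subseteq F$ becomes applicable.
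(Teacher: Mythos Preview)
Your proof is correct and follows essentially the same argument as the paper's: both verify that identities are admissible and then, for a composable pair $G\xrightarrow{\phi}F\xrightarrow{\psi}H$, chase the conditions \eqref{eq:herbis} and \eqref{eq:satbis} through the two inclusions using injectivity of $\psi$ at exactly the same points. Your introduction of the names $E=\phi(G)$, $\hat E=\psi(F)$, $D=\psi(E)$ makes the chase marginally cleaner, but the logic is identical.
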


\begin{proof}
We must show that $\mathsf{AdGrph}$ is a subcategory of the category of (row-finite) directed graphs.
Clearly, for every row-finite graph $E$, the identity morphism is an admissible morphism.
Consider two admissible morphisms
\[
G\xrightarrow{\quad\phi\quad}F\xrightarrow{\quad\psi\quad}H .
\]
Since $\phi$ and $\psi$ are injective, their composition is injective.
We must show that $\psi(\phi(G))$ satisfies \eqref{eq:herbis} and \eqref{eq:satbis}.

\smallskip

We start with \eqref{eq:herbis}.
Let $h\in H^1$ be such that $r(h)\in\psi_0(\phi_0(G^0))$. Then $r(h)\in\psi_0(F^0)$ and, by assumption, this means that $h\in\psi_1(F^1)$. Thus, $h=\psi_1(f)$ for some $f\in F^1$.
Since $r(h)=\psi_0(r(f))$ belongs to $\psi_0(\phi_0(G^0))$.
Since $\psi_0$ is injective, this implies that $r(f)$ belongs to $\phi_0(G^0)$. By assumption, this implies $f\in\phi_1(G^1)$.
Thus, $h=\psi_1(f)\in\psi_1(\phi_1(G^1))$, which is what we wanted to prove.

\smallskip

Next, we pass to \eqref{eq:satbis}. Let $v\in\psi_0(\phi_0(G^0))$ be not a sink.
From the property \eqref{eq:satbis} for $\psi(F)$ we deduce that there exists $h=\psi_1(f)\in\phi_1(F^1)$ with $s(h)=v$. Since $s(h)=\psi_0(s(f))\in\psi_0(\phi_0(G^0))$ and
$\psi_0$ is injective, we deduce that $s(f)\in\phi_0(G^0)$. Since $s(f)$ is obviously not a sink, from the property \eqref{eq:satbis} for $\phi(G)$ we deduce that there exists $f'\in\phi_1(G^1)$ with $s(f')=s(f)$. Let $h'\in\psi_1(\phi_1(G^1))$ be the edge $h':=\psi_1(f')$. Then
\[
s(h')=\psi_0(s(f'))=\psi_0(s(f))=s(h)=v ,
\]
thus concluding the proof.
\end{proof}

\begin{prop}
The associations
\[
E\mapsto C^*(E)\qquad\text{and}\qquad\phi\mapsto C^*(\phi)
\]
give a contravariant functor $C^*$ from the category $\mathsf{AdGrph}$ to the category of $U(1)$-C*-algebras and surjective *-homomorphisms.
\end{prop}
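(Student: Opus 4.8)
The plan is to verify the three defining properties of a contravariant functor, using throughout the elementary fact that any *-homomorphism out of a graph C*-algebra is completely determined by its values on the Cuntz-Krieger generators, since these generate the algebra as a C*-algebra. Well-definedness on objects and morphisms is essentially already done: every object of $\mathsf{AdGrph}$ is a row-finite graph, so $C^*(E)$ carries its gauge action and is a $U(1)$-C*-algebra, and Corollary~\ref{cor:1} provides, for each admissible $\phi\colon G\to F$, the surjective $U(1)$-equivariant *-homomorphism $C^*(\phi)\colon C^*(F)\to C^*(G)$ with its explicit description on generators. I would also note that the preceding proposition guarantees $\mathsf{AdGrph}$ is a category (so $C^*(\psi\circ\phi)$ is defined whenever we need it) and that a composition of surjective *-homomorphisms is again surjective (so the codomain is a genuine category). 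The identity is immediate: by the formula in Corollary~\ref{cor:1}, $C^*(\id_E)$ fixes every $P_v$ and every $S_e$, hence equals $\id_{C^*(E)}$.

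The substance is contravariance. Given admissible morphisms $G\xrightarrow{\phi}F\xrightarrow{\psi}H$, both $C^*(\phi)\circ C^*(\psi)$ and $C^*(\psi\circ\phi)$ are *-homomorphisms $C^*(H)\to C^*(G)$, so it suffices to compare them on the generators $P_w$ ($w\in H^0$) and $S_h$ ($h\in H^1$). For $P_w$ I would run the following case analysis. If $w=\psi_0(\phi_0(v))$ for some $v\in G^0$ --- unique, by injectivity of $\psi_0\circ\phi_0$ --- then $C^*(\psi)(P_w)=P_{\phi_0(v)}$ and applying $C^*(\phi)$ yields $P_v$, which is exactly $C^*(\psi\circ\phi)(P_w)$. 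Otherwise, either $w\notin\psi_0(F^0)$, so that $C^*(\psi)(P_w)=0$; or $w=\psi_0(u)$ with $u\in F^0\smallsetminus\phi_0(G^0)$, so that $C^*(\psi)(P_w)=P_u$ and then $C^*(\phi)(P_u)=0$. In both cases the composite kills $P_w$, and so does $C^*(\psi\circ\phi)$ since $w\notin(\psi\circ\phi)_0(G^0)$. The identical argument, verbatim with edges in place of vertices and using injectivity of $\psi_1\circ\phi_1$, handles $S_h$. Hence $C^*(\phi)\circ C^*(\psi)=C^*(\psi\circ\phi)$, completing the verification.

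I do not expect a genuine obstacle here: every step reduces to a check on the generators, and the only thing requiring care is the bookkeeping in the case analysis --- namely keeping track of whether a vertex or edge of $H$ lies in the image of $\psi$, in the image of $\psi\circ\phi$, or in neither, and invoking the injectivity of the component maps to see that the generator assignments are well defined. Everything else is routine.
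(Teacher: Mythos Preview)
Your proof is correct, but it takes a genuinely different route from the paper's. You verify the functoriality equation $C^*(\psi\circ\phi)=C^*(\phi)\circ C^*(\psi)$ by a direct case-by-case comparison on Cuntz--Krieger generators, using only that a $*$-homomorphism out of $C^*(H)$ is determined by its values on $\{P_w,S_h\}$ and that the component maps $\phi_i,\psi_i$ are injective. The paper instead works at the level of kernels: it identifies $I_\psi$, $I_\phi$, and $I_{\psi\circ\phi}$ as the ideals generated by the vertex projections lying outside the relevant images, then shows $I_\psi\subset I_{\psi\circ\phi}$ and $C^*(\psi)(I_{\psi\circ\phi})=I_\phi$, which amounts to the same factorization of quotients. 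Your approach is more elementary and entirely self-contained---it never invokes the description of gauge-invariant ideals via hereditary saturated sets---while the paper's approach is more structural and situates the argument within the ideal lattice of graph C*-algebras, which is the natural setting for the broader \textsf{CRTBPOG} framework referenced there. Both are short; yours is arguably cleaner for the injective case at hand.
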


\begin{proof}
It follows from Cor.~\ref{cor:1} that $C^*(\phi)$ is a morphism in the target category.
Clearly the identity map $E\to E$ is an admissible morphism, and in this case $C^*(\phi)$ is the identity
(the quotient by the zero  ideal).
Let
\[
E\xrightarrow{\quad\phi\quad}F\xrightarrow{\quad\psi\quad}G .
\]
be two admissible morphisms. We now show that $C^*(\psi\circ\phi)=C^*(\phi)\circ C^*(\psi)$. More precisely, call $I_\phi$, $I_\psi$ and $I_{\psi\circ\phi}$ the kernels of $C^*(\phi)$, $C^*(\psi)$ and $C^*(\psi\circ\phi)$, respectively. We now show that \begin{equation}\label{eq:inclusion}
I_\psi\subset I_{\psi\circ\phi}
\end{equation}
and that
\begin{equation}\label{eq:quotient}
I_{\psi\circ\phi}/I_\psi=C^*(\psi)(I_{\psi\circ\phi})=I_\phi .
\end{equation}
Denote by $\{S_e,P_v\}$ the Cuntz-Krieger $G$-family in $C^*(G)$ and by $\{T_f,Q_w\}$ the Cuntz-Krieger $F$-family in $C^*(F)$.
Since $\psi_0$ is injective, $G^0$ is the disjoint union of three subsets
\begingroup
\setlength{\arraycolsep}{1em}
\[
\begin{array}{ccc}
G^0\smallsetminus\psi_0(F^0) , &
\psi_0\big(F^0\smallsetminus\phi_0(E^0)\big) , &
\psi_0\big(\phi_0(E^0)\big) . \\[2pt]
\text{(I)} &
\text{(II)} &
\text{(III)}
\end{array}
\]
\endgroup

The ideal $I_\psi$ is generated by the vertex projections $P_v$ with $v$ in the set (I).
The ideal $I_{\psi\circ\phi}$ is generated by the vertex projections $P_v$ with $v\in G^0\smallsetminus\psi_0\big(\phi_0(E^0)\big)$, that means that $v$ is either in the set (I) or in the set (II). From this observation it follows the inclusion \eqref{eq:inclusion}.

The ideal $I_\phi$ is generated by the vertex projections $Q_w$ with $w\in F^0\smallsetminus\phi_0(E^0)$. Since $\psi_0$ is injective, this is equivalent to the condition that $\psi_0(w)$ belongs to the set (II).

For $v$ in the set (I), $C^*(\psi)(P_v)=0$.
For $v=\psi_0(w)$ in the set (II),
\[
C^*(\psi)(P_v)=Q_w \in I_\phi .
\]
Thus, $C^*(\psi)(I_{\psi\circ\phi})\subset I_\phi$. Since every generator is in the image, this inclusion is actually an equality, which proves \eqref{eq:quotient}.
\end{proof}

\begin{ex}[Trimmable graphs  \protect{\cite{ADHT22}}]\label{ex:trim}
Let $F$ be a row-finite graph, $v_0\in F^0$, and let $G\subset F$ be the subgraph given by $G^0:=F^0\smallsetminus\{v_0\}$ and $G^1:=F^1\smallsetminus r^{-1}(v_0)$.
By construction, $G$ is an induced subgraph.
If $F$ is $v_0$-trimmable (see Definition 2.1 in \cite{ADHT22}),
then $\{v_0\}$ is hereditary and saturated in $F$, and
the inclusion of $G$ into $F$ is an admissible morphism.
\end{ex}

\begin{ex}[Quantum spheres \protect{\cite{HS02}}]\label{ex:qsphere}
A special case of Example \ref{ex:trim} is given by the quantum spheres $\mathbb{S}^{2n-1}_q$ of Vaskman-Soibelman \cite{VS91}. 
For $n=1$, $\mathbb{S}^1_q=\mathbb{S}^1$ is the unit circle.
For $n=2$, $\mathbb{S}^3_q$ is the quantum space underlying the quantum $SU(2)$ group.
For $n\geq 1$, $C(\mathbb{S}^{2n-1}_q)\cong C^*(\Sigma_{2n-1})$,
where $\Sigma_{2n-1}$ is the graph in the picture below:\smallskip
\begin{center}
\begin{tikzpicture}[inner sep=3pt,font=\footnotesize]

\clip (-0.6,-2.7) rectangle (10.6,1.4);

\node[main node] (1) {};
\node (2) [main node,right of=1] {};
\node (3) [main node,right of=2] {};
\node (4) [main node,right of=3] {};
\node (5) [right of=4] {};
\node (6) [main node,right of=5] {};

\filldraw (1) circle (0.06) node[below left] {$1$};
\filldraw (2) circle (0.06) node[below left] {$2$};
\filldraw (3) circle (0.06) node[below=2pt] {$3$};
\filldraw (4) circle (0.06) node[below=2pt] {$4$};
\filldraw (6) circle (0.06) node[below right] {$n$};

\path[freccia] (1) edge[ciclo] (1);
\path[freccia] (2) edge[ciclo] (2);
\path[freccia] (3) edge[ciclo] (3);
\path[freccia] (4) edge[ciclo] (4);
\path[freccia] (6) edge[ciclo] (6);

\path[freccia]
	(1) edge (2)
	(2) edge (3)
	(3) edge (4)
	(4) edge[dashed]  (6)
	(1) edge[bend right] (3)
	(1) edge[bend right=40] (4)
	(2) edge[bend right] (4);

\path[white]
	(1) edge[bend right=60] coordinate (7) (6)
	(2) edge[bend right=50] coordinate (8) (6)
	(3) edge[bend right=40] coordinate (9) (6);

\path
	(1) edge[out=-60,in=180] (7)
	(2) edge[out=-50,in=180] (8)
	(3) edge[out=-40,in=180] (9);

\path[->,dashed,shorten >=2pt]
	(7) edge[out=0,in=240] (6)
	(8) edge[out=0,in=230] (6)
	(9) edge[out=0,in=220] (6);

\path[freccia,dashed] (4) edge[bend right,dashed] (6);
\end{tikzpicture}\smallskip
\end{center}
This graph is denoted by $L_{2n-1}$ in \cite{HS02}, but we avoid using the letter $L$ since it is already used for line bundles. The graph $\Sigma_{2n-1}$ has vertex set $\Sigma_{2n-1}^0:=\{1,\ldots,n\}$ and one edge from the vertex $i$ to the vertex $j$ for all $1\leq i\leq j\leq n$.

For $n\geq 2$, the embedding $\Sigma_{2n-3}\to\Sigma_{2n-1}$ as the subgraph consisting of the vertices $i$ and edges $(i,j)$
of $\Sigma_{2n-1}$ with $i,j\leq n-1$ is an admissible morphism.
\end{ex}

\begin{ex}[Multichamber quantum 2-spheres \protect{\cite{HT24}}]\label{ex:c2k}
Let $k\geq 0$ and let $C^2_{k-1}$ be the
\pagebreak
graph with $k+1$ vertices, a loop at a vertex $v_0$, and an edge from $v_0$ to $w$ for all $w\neq v_0$:
\begin{center}
\begin{tikzpicture}[inner sep=3pt,font=\footnotesize]

\node[main node] (1) {};
\node (4) [below of=1] {};
\node (3) [left of=4] {};
\node (2) [left of=3] {};
\node (5) [right of=4] {};
\node (6) [right of=5] {};

\draw (5) node[below=5pt] {$\ldots$};
\filldraw (1) circle (0.06) node[above=5pt] {$v_0$};
\filldraw (2) circle (0.06) node[below=2pt] {$1$};
\filldraw (3) circle (0.06) node[below=2pt] {$2$};
\filldraw (4) circle (0.06) node[below=2pt] {$3$};
\filldraw (6) circle (0.06) node[below=2pt] {$k$};

\path[freccia] (1) edge[ciclo] (1) (1) edge (2) (1) edge (3) (1) edge (4) (1) edge (6);

\end{tikzpicture}\smallskip
\end{center}
For $k\geq 2$, $C^*(C^2_{k-1})$ is the C*-algebra of what is called a \emph{multichamber quantum 2-sphere} in \cite{HT24}.
If $k=2$, this is the equatorial Podle\'s sphere \cite{HS02}, if $k=1$ we get the Toeplitz algebra, and if $k=0$ we get $C(\mathbb{S}^1)$.
The obvious embedding
\[
C^2_{k-1}\to C^2_k
\]
is an admissible morphism for all $k\geq 0$.

Observe that the induced *-homomorphism $C^*(C^2_k)\to C^*(C^2_{k-1})$ goes in the opposite direction w.r.t.~the one in \cite[Eq.~(7.9)]{HS02}. For $k=0$, this *-homomorphism is the well known symbol map.
\end{ex}

\begin{ex}[Quantum lens spaces \protect{\cite{HS08}}]\label{ex:qlens}
For $k\geq 0$, let $L_3(k)$ be the graph obtained from the graph $C^2_{k-1}$ in Example
\ref{ex:c2k} by adding one loop at every vertex different from $v_0$:
\begin{center}
\begin{tikzpicture}[inner sep=3pt,font=\footnotesize]

\node[main node] (1) {};
\node (4) [below of=1] {};
\node (3) [left of=4] {};
\node (2) [left of=3] {};
\node (5) [right of=4] {};
\node (6) [right of=5] {};

\draw (5) node[below=5pt] {$\ldots$};
\filldraw (1) circle (0.06) node[above=5pt] {$v_0$};
\filldraw (2) circle (0.06) node[above left] {$1$};
\filldraw (3) circle (0.06) node[above left] {$2$};
\filldraw (4) circle (0.06) node[above left] {$3$};
\filldraw (6) circle (0.06) node[above right] {$k$};

\path[freccia] (1) edge[ciclo] (1) (1) edge (2) (1) edge (3) (1) edge (4) (1) edge (6);
\path[out=230, in=310, loop, distance=1.7cm, ->,semithick] (2) edge (2) (3) edge (3) (4) edge (4) (6) edge (6);

\end{tikzpicture}
\end{center}
For $k\geq 2$ we get the C*-algebra of the quantum Lens space $L^3_q(k;1,k)$ \cite{HS08}. For $k=0$ we get again $C(\mathbb{S}^1)$ and for $k=1$ we get $SU_q(2)$ \cite{HS02}. The obvious embedding
\[
L_3(k)\to L_3(k+1)
\]
is an admissible morphism for all $k\geq 0$.
\end{ex}

\section{Admissible morphisms $E\to E\times E$}\label{sec:5}

In this section we study two types of morphisms $E\to E\times E$: vertical embeddings (Sect.~\ref{sec:51}) and the diagonal embedding (Sect.~\ref{sec:52}).
The former embedding is admissible for a class of graphs which includes many interesting examples, such as Vaksman-Soibelman quantum spheres. As a result of this construction, one gets a ring structure on the K-theory of a quantum projective space, although not a very interesting one. The latter embedding is admissible for graphs such that every vertex has at most one entry. For these graphs, the AF core is commutative.

\subsection{Vertical embeddings}\label{sec:51}

\begin{prop}\label{prop:const}
Let $G$ and $E$ be directed graphs, and assume that there is a loop $\ell_0\in G^1$. Call $w_0:=s(\ell_0)$. Then, a morphism $\phi=(\phi_0,\phi_1):E\to G\times E$ given by
\[
\phi_0(v):=(w_0,v) , \qquad \phi_1(e):=(\ell_0,e) ,
\]
for all $v\in E^0$ and $e\in E^1$.
The morphism $\phi$ is admissible if and only if $r^{-1}(w_0)=\{\ell_0\}$.
\end{prop}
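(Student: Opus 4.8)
The plan is to verify the two conditions in Definition~\ref{def:adm} for the morphism $\phi:E\to G\times E$, namely injectivity and the conditions \eqref{eq:herbis}--\eqref{eq:satbis} on the image subgraph $E':=\phi(E)$, and to identify exactly when these hold. First I would observe that $\phi$ is automatically a well-defined injective graph morphism: the source/range compatibility follows since $s(\ell_0)=r(\ell_0)=w_0$, so $s(\phi_1(e))=(w_0,s(e))=\phi_0(s(e))$ and similarly for $r$; injectivity of $\phi_0$ and $\phi_1$ is clear because the second coordinate already determines $v$ resp.~$e$. Thus condition (i) of admissibility always holds, and the entire content of the statement is the equivalence with $r^{-1}(w_0)=\{\ell_0\}$ coming from the conditions on $E'$.

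Next I would unwind what $E'=\phi(E)$ looks like: its vertex set is $\{w_0\}\times E^0$ and its edge set is $\{\ell_0\}\times E^1$. The key step is checking \eqref{eq:herbis}: an edge $(f,e)\in (G\times E)^1$ has range in $(E')^0=\{w_0\}\times E^0$ iff $r(f)=w_0$, i.e.~$f\in r^{-1}(w_0)$; and such an edge lies in $(E')^1=\{\ell_0\}\times E^1$ iff $f=\ell_0$. So \eqref{eq:herbis} says precisely: every $f\in r^{-1}(w_0)$ equals $\ell_0$, i.e.~$r^{-1}(w_0)\subseteq\{\ell_0\}$; combined with $\ell_0\in r^{-1}(w_0)$ (which holds by construction), this is exactly $r^{-1}(w_0)=\{\ell_0\}$. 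This identity is therefore forced by \eqref{eq:herbis}, giving the ``only if'' direction.

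For the converse I would check that $r^{-1}(w_0)=\{\ell_0\}$ also forces \eqref{eq:satbis}. Take a vertex $(w_0,v)\in(E')^0$ that is not a sink in $G\times E$; since $s^{-1}(w_0,v)=s^{-1}(w_0)\times s^{-1}(v)$, this means $s^{-1}(w_0)\neq\emptyset$ and $s^{-1}(v)\neq\emptyset$ in their respective graphs. In particular $\ell_0$ witnesses $s^{-1}(w_0)\neq\emptyset$, and picking any $e\in s^{-1}(v)$ we get $(\ell_0,e)\in(E')^1$ with $s(\ell_0,e)=(w_0,v)$, which is exactly what \eqref{eq:satbis} demands. (Note this direction of \eqref{eq:satbis} in fact holds unconditionally; what \emph{can} fail without the hypothesis is only \eqref{eq:herbis}, so the role of the hypothesis is solely to guarantee \eqref{eq:herbis}.) Having verified both conditions, Definition~\ref{def:adm} gives that $\phi$ is admissible iff $r^{-1}(w_0)=\{\ell_0\}$, completing the proof. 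The only mildly delicate point — and the place I would be most careful — is the bookkeeping identifying which edges of the product graph have range in $\{w_0\}\times E^0$, since the asymmetry between ``range lands in $w_0$'' versus ``the edge is $\ell_0$'' is exactly where the hypothesis enters.
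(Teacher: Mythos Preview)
Your proposal is correct and follows essentially the same approach as the paper: verify that $\phi$ is a graph morphism, observe that \eqref{eq:satbis} holds unconditionally (since $\ell_0$ provides the required outgoing edge), and then show that \eqref{eq:herbis} is equivalent to $r^{-1}(w_0)=\{\ell_0\}$. Your write-up is in fact slightly more careful than the paper's, since you explicitly verify injectivity of $\phi$ (which the paper leaves implicit) and cleanly separate the roles of the two admissibility conditions.
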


\begin{proof}
Since $s(\phi_1(e))=(w_0,s(e))=\phi_0(s(e))$ and $r(\phi_1(e))=(w_0,r(e))=\phi_1(r(e))$,
the pair $(\phi_0,\phi_1)$ is indeed a morphism of graphs.

If $v\in E^0$ is a sink in $E$, then $\phi_0(v)$ is a sink in $G\times E$.
If $v$ is not a sink, given any edge $e\in E^1$ with $s(e)=v$, the edge $(\ell_0,e)\in G^1\times E^1$ has source $\phi_0(v)$ and range $(w_0,r(e))=\phi_0(r(e))$ in the image of $\phi_0$. Thus, \eqref{eq:satbis} is satisfied.

Suppose $(g,e)\in G^1\times E^1$ has range in $\phi_0(E^0)$, that is $r(g,e)=(w_0,v)$ for some $v\in E^0$. If there is an edge $g\neq \ell_0$ with range $w_0$, then $(g,e)\notin\phi_0(E^0)$ and \eqref{eq:herbis} is not satisfied. Conversely, if the only edge with range $w_0$ is $\ell_0$, then $(g,e)=(\ell_0,e)$ for every $(g,e)$ with range in $\phi_0(E^0)$, and so $(g,e)=\phi(e)\in\phi_1(E^1)$ and \eqref{eq:herbis} is satisfied.
\end{proof}

The graph of a quantum sphere (Examples \ref{ex:qsphere}),
the graph of a multichamber quantum 2-sphere (Esample \ref{ex:c2k})
and the graph of a quantum lens space (Example \ref{ex:qlens}) all satisfy the hypothesis of the Prop.~\ref{prop:const}.
Observe that the graphs of a quantum sphere and of a quantum lens space have no sinks.

If $G$ and $E$ are row-finite with no sinks, combining Prop.~\ref{prop:const} with Prop.~\ref{prop:2} one gets a *-homomorphism
\[
C^*(G)^{U(1)}\otimes C^*(E)^{U(1)}\longrightarrow C^*(E)^{U(1)}
\]
and then a morphism of abelian groups
\[
K_*\big(C^*(G)^{U(1)}\big)\otimes K_*\big(C^*(E)^{U(1)}\big)\longrightarrow K_*\big(C^*(E)^{U(1)}\big) .
\]
For $G=E$ one gets a ring structure on $K_*\big(C^*(E)^{U(1)}\big)$, although not a very interesting one.

In the notations of Prop.~\ref{prop:const}, the complement $V$ of $\{w_0\}$ in $G^0$ is an hereditary and saturated subset,
\[
G\smallsetminus V=\hspace*{-7mm}
\begin{tikzpicture}[font=\scriptsize,baseline=(current bounding box.center)]

\node[main node] (1) {};
\filldraw (1) circle (0.06) node[below=2pt] {$w_0$};
\path[freccia] (1) edge[ciclo] node[above] {$\ell_0$} (1);

\end{tikzpicture}
\]
and the above *-homomorphism factors as follows
\[
C^*(G)^{U(1)}\otimes C^*(E)^{U(1)}\longrightarrow 
\underbrace{C(\mathbb{S}^1)^{U(1)}}_{=\C}\otimes C^*(E)^{U(1)} \longrightarrow 
C^*(E)^{U(1)} .
\]
The map in K-theory induced by the second arrow is simply the multiplication by $\Z$.

\smallskip

On the other hand, in the case of quantum spheres the one just discussed is the only admissible embedding $E\to E\times E$. Let us illustrate this point when $E=\Sigma_3$ is the graph of $SU_q(2)$:
\begin{equation}\label{eq:SUq2}
\begin{tikzpicture}[nodogrigio/.style={circle,inner sep=2pt,fill=gray!20},font=\scriptsize,baseline=(current bounding box.center)]

\node[nodogrigio] (1) {$1$};
\node (2) [nodogrigio,right of=1] {$2$};

\path[freccia] (1) edge[ciclo] (1);
\path[freccia] (2) edge[ciclo] (2);
\path[freccia] (1) edge (2);

\end{tikzpicture}
\end{equation}
The product $E\times E$ is the graph
\begin{equation}\label{eq:graphSuq2}
\begin{tikzpicture}[nodogrigio/.style={inner sep=2pt,fill=gray!20,rectangle,rounded corners=5pt},font=\scriptsize,baseline=(current bounding box.center)]

\node[nodogrigio] (1) {$(1,1)$};
\node (2) [nodogrigio,right of=1] {$(2,1)$};
\node (3) [nodogrigio,right of=2] {$(1,2)$};
\node (4) [nodogrigio,right of=3] {$(2,2)$};

\path[freccia] (1) edge[ciclo] (1);
\path[freccia] (2) edge[ciclo] (2);
\path[freccia] (3) edge[ciclo] (3);
\path[freccia] (4) edge[ciclo] (4);
\path[freccia]
	(1) edge (2)
	(1) edge[bend right=30] (3)
	(1) edge[bend right=50] (4)
	(2) edge[bend right=30] (4)
	(3) edge (4);

\end{tikzpicture}\vspace{-3pt}
\end{equation}
Note that this is obtained from the graph $\Sigma_7$ of $\mathbb{S}^7_q$ by removing one arrow.

An embedding $E\to E\times E$ corresponds to a choice of two vertices in the graph \eqref{eq:graphSuq2}, which identify a unique induced subgraph. One can study all possible cases (six) and find that there are only two choices of vertices giving an admissible embedding. One can choose the vertices $(1,1)$ and $(1,2)$, and the corresponding embedding is the vertical embedding with $w_0=1$ and $\ell_0$ the loop at $1$. Otherwise, one can choose the vertices $(1,1)$ and $(2,1)$, and the corresponding embedding is the map $v\mapsto (v,w_0)$, $e\mapsto (e,\ell_0)$, with $w_0=1$ and $\ell_0$ the loop at $1$ (the horizontal embedding symmetric to the vertical embedding above). The diagonal embedding, discussed in the next section, corresponds to the choice of vertices $(1,1)$ and $(2,2)$, and one can check that it's not admissible (the remaining two vertices form a subset of $E^0$ that is not hereditary).

\subsection{The diagonal embedding}\label{sec:52}

The diagonal embedding $E\to E\times E$ of a graph $E$ is the injective morphism given by $v\mapsto (v,v)$ and $e\mapsto (e,e)$, for all $v\in E^0$ and $e\in E^1$.

If $S$ is any set, $D\subseteq S$ a subset, and $f:D\to S$ a function, we can define a graph $E$ by $E^0:=S$ and $E^1:=\{(x,y)\in D\times S:y=f(x)\}$. In this graph, every vertex emits at most one edge.
A directed graph where each vertex emits at most one edge is called a \emph{functional graph}. Every functional graph is associated to a partially defined function through the construction above.

If $E$ is any directed graph, the \emph{transposed} of $E$ is the graph obtained from $E$ by reversing the direction of all the arrows (so, the source map of $E$ is the range map of the transposed graph, and vice versa).

\begin{prop}\label{prop:diagem}
The diagonal embedding $E\to E\times E$ is admissible if and only if the transposed of $E$ is a functional graph.
\end{prop}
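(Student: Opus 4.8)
The plan is to unwind Definition~\ref{def:adm} for the specific morphism $\Delta:E\to E\times E$, $v\mapsto(v,v)$, $e\mapsto(e,e)$. Injectivity is immediate, so the content is entirely in verifying (or obstructing) conditions \eqref{eq:herbis} and \eqref{eq:satbis} for the induced subgraph $\Delta(E)\subset E\times E$, whose vertex set is the diagonal $\{(v,v):v\in E^0\}$ and whose edge set is $\{(e,e):e\in E^1\}$. I first observe that \eqref{eq:satbis} always holds: if $(v,v)$ is not a sink in $E\times E$, then $s^{-1}(v,v)=s^{-1}(v)\times s^{-1}(v)$ is nonempty, so $v$ is not a sink in $E$, and then for any $e\in E^1$ with $s(e)=v$ the diagonal edge $(e,e)$ has source $(v,v)$ and lies in $\Delta(E)^1$. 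So the whole equivalence reduces to: \eqref{eq:herbis} holds for $\Delta(E)$ if and only if the transposed graph $E^{\mathrm{op}}$ is a functional graph, i.e.\ if and only if every vertex of $E$ receives at most one edge.

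For the substantive equivalence I would argue both directions contrapositively, since that is cleanest. Suppose some vertex $v\in E^0$ receives two distinct edges $e_1\neq e_2$, with $r(e_1)=r(e_2)=v$. Consider the edge $(e_1,e_2)\in (E\times E)^1$: its range is $(r(e_1),r(e_2))=(v,v)\in\Delta(E)^0$, but $(e_1,e_2)\notin\Delta(E)^1$ because $e_1\neq e_2$. Hence \eqref{eq:herbis} fails and $\Delta$ is not admissible. Conversely, suppose every vertex of $E$ receives at most one edge, and take any $(e,f)\in(E\times E)^1$ with $r(e,f)=(r(e),r(f))\in\Delta(E)^0$; then $r(e)=r(f)=:v$, and since $v$ receives at most one edge we must have $e=f$, so $(e,f)=(e,e)\in\Delta(E)^1$. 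Thus \eqref{eq:herbis} holds, and together with \eqref{eq:satbis} this gives admissibility. Finally I would translate ``every vertex of $E$ receives at most one edge'' into the language of the statement: this says exactly that the range map $r$ of the transposed graph $E^{\mathrm{op}}$ (which is the source map of $E$) \emph{emits} at most one edge from each vertex — wait, more carefully, in $E^{\mathrm{op}}$ the source map is $r$ and the range map is $s$, so ``each vertex emits at most one edge in $E^{\mathrm{op}}$'' means ``each vertex is the range of at most one edge in $E$'', which is precisely our condition; by the definition recalled just before the proposition, this is the statement that $E^{\mathrm{op}}$ is a functional graph.

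I do not expect a serious obstacle here — the only place requiring a moment's care is getting the direction of the transpose right (functional means ``at most one \emph{outgoing}'' edge, and transposing swaps incoming and outgoing), and making sure \eqref{eq:satbis} genuinely imposes no constraint so that the equivalence is clean. One could also streamline by noting $\Delta(E)$ is automatically induced once \eqref{eq:herbis} is checked, as remarked after \eqref{eq:satbis} in the text, but since the statement is a biconditional it is clearest to verify \eqref{eq:herbis} directly in both directions as above.
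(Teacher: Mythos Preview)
Your proposal is correct and follows essentially the same approach as the paper's proof: show that \eqref{eq:satbis} is automatic (via the same argument that if $(v,v)$ is not a sink then neither is $v$, so a diagonal edge $(e,e)$ exists), and then argue that \eqref{eq:herbis} for the diagonal image is equivalent to the injectivity of $r$ on edges, i.e.\ to every vertex of $E$ receiving at most one edge. The paper phrases the \eqref{eq:herbis} equivalence in one line rather than as two contrapositives, but the content is identical.
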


\begin{proof}
Given $e,f\in E$, $r(e,f)$ belongs to the diagonal if{}f $r(e)=r(f)$.
The condition \eqref{eq:herbis} translates into the condition that $r(e)=r(f)$ implies $e=f$, that is: every vertex receives at most one edge (thus, in the transposed of $E$ every vertex emits at most one edge). The condition \eqref{eq:satbis} is automatically satisfied: if $(v,v)\in E^0\times E^0$ is not a sink, $v\in E^0$ is not a sink, which means that there exists $e\in E^1$ with $s(e)=v$, and so $s(e,e)=(v,v)$.
\end{proof}

The graph of a multichamber quantum 2-sphere in Example \ref{ex:c2k} is the transposed of a functional graph.

\begin{ex}[Matrix algebras]
Consider the two graphs
\begin{center}
\begin{tikzpicture}[inner sep=3pt,baseline=(current bounding box.center)]

\node[main node] (1) {};
\node (2) [right of=1] {};
\node (3) [right of=2] {};

\filldraw (1) circle (0.06);
\filldraw (2) circle (0.06);
\filldraw (3) circle (0.06);

\path[freccia] (1) edge (2) (2) edge (3);

\end{tikzpicture}
\hspace{3cm}
\begin{tikzpicture}[inner sep=3pt,baseline=(current bounding box.center)]

\node[main node] (1) {};
\node (2) [right of=1] {};

\filldraw (1) circle (0.06);
\filldraw (2) circle (0.06);

\path[freccia] (1) edge[bend right] (2) (1) edge[bend left] (2);

\end{tikzpicture}
\end{center}
Each one is its own transposed.
The first graph is a functional graph while the second is not, even if the graph C*-algebra is, in both cases, isomorphic to $M_3(\C)$.
\end{ex}

Recall that directed graph $G$ is called \emph{connected} if for every $v,w\in G^0$ with $v\neq w$ there is an undirected walk joining $v$ and $w$. (According to this definition, all the examples of graphs that we saw in this paper are connected.)

\begin{lemma}\label{lemma:arbo}
Let $E$ be a connected functional graph or the transposed of a connected functional graph. Then, $E$ has at most one directed cycle.
\end{lemma}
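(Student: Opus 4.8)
The plan is to reduce to the case of a functional graph (since a graph $E$ is a connected functional graph if and only if its transpose is one, and reversing arrows is a bijection on the set of directed cycles of $E$ via $\alpha \mapsto \bar\alpha$). So assume $E$ is a connected functional graph: every vertex emits \emph{at most one} edge. The key observation is that a directed walk in $E$ is completely determined by its starting vertex, because at each step there is no choice: if $v_{i-1}$ emits an edge at all, it emits exactly one, and that edge determines $v_i$. Consequently, for any vertex $v$ there is a unique maximal directed walk starting at $v$, which is either infinite or terminates at a sink.

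First I would prove the \emph{uniqueness}: suppose $C_1$ and $C_2$ are two directed cycles. Pick $v_1 \in C_1^0$ and $v_2 \in C_2^0$. By connectedness there is an undirected walk from $v_1$ to $v_2$; I claim this forces $C_1 = C_2$. Actually the cleanest route is: in a functional graph, any vertex $v$ lies on a directed cycle if and only if the unique forward walk from $v$ returns to $v$, and in that case the cycle through $v$ is unique and consists exactly of the vertices visited before the first return. Now if $C_1 \neq C_2$, choose $v_1 \in C_1^0 \setminus C_2^0$ (possible after swapping if necessary, unless $C_1^0 = C_2^0$, in which case the cycles coincide since the edges out of each vertex are unique) and $v_2 \in C_2^0$, and take a shortest undirected walk $\alpha = v_1 e_1 \cdots e_n v_2$ between them. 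Look at the last edge $e_n$: it is either traversed forward (so $s(e_n) = v_{n-1}$, $r(e_n) = v_2$) or backward (so $s(e_n) = v_2$). In the forward case, $v_2$ receives the edge $e_n$; but $v_2 \in C_2^0$ also receives the cycle-edge of $C_2$ ending at it — this does not immediately contradict anything since in a functional graph vertices may receive many edges. So instead I would push the argument from the \emph{source} side: since every vertex emits at most one edge, the walk, read backwards from $v_2$, is forced as long as edges are traversed against their direction, and forced forwards from $v_1$ as long as edges are traversed with their direction. The real content is that two distinct cycles in a functional graph cannot be joined by any undirected walk without creating a vertex with two outgoing edges somewhere along the way.

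The slickest formulation I would actually carry out: since each vertex emits at most one edge, consider the partially-defined \enquote{successor} function $\sigma : E^0 \rightharpoonup E^0$ sending a vertex to the range of its (unique) outgoing edge, when it exists. A vertex lies on a directed cycle precisely when it is a periodic point of $\sigma$, and the cycle is its $\sigma$-orbit. Two periodic orbits are either equal or disjoint, and moreover two \emph{distinct} periodic orbits lie in distinct connected components: indeed, if $v$ and $w$ are in the same component, pick an undirected walk between them and argue by induction on its length that $v,w$ have the same \enquote{eventual orbit} — each step of the walk (forward or backward) relates $\sigma^j(v)$ and $\sigma^k(w)$ for suitable $j,k$, because going backward along an edge $e$ with $s(e) = u$ lands at $u$, and $u$ has $\sigma(u) = r(e)$ fixed. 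So within one connected component there is at most one $\sigma$-periodic orbit, i.e.~at most one directed cycle. Combined with connectedness of $E$, this gives the claim.

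\textbf{Main obstacle.} The subtle point is that in a functional graph a vertex can \emph{receive} arbitrarily many edges, so the naive \enquote{follow the walk in both directions} argument needs care in the backward direction: going backward along an edge $e$ is not deterministic. The fix, which I expect to be the crux of the write-up, is to note that although backward steps are not deterministic, each backward step $u \leftarrow r(e) = v$ still \emph{pins down} $\sigma(u) = v$, so the forward orbits still get identified. Packaging this cleanly — probably via the function $\sigma$ and an induction on walk-length showing \enquote{same component $\Rightarrow$ same eventual $\sigma$-orbit} — is where the actual work lies; everything else (the transpose reduction, uniqueness of outgoing edges, periodic orbit $=$ cycle) is routine.
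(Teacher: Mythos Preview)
Your approach is correct and takes a genuinely different route from the paper's. One phrasing issue: the sentence ``a graph $E$ is a connected functional graph if and only if its transpose is one'' is literally false (the transpose of a functional graph need not be functional); what you clearly \emph{mean}, and what is true, is that transposition preserves connectedness and bijects directed cycles, so it suffices to treat the functional case. Fix that sentence.

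On substance: the paper works in the dual setting (every vertex \emph{receives} at most one edge) and argues by direct walk-shortening --- given two cycles $c_1,c_2$ and an undirected walk between them, it shows that the walk's endpoint edge must coincide with a cycle edge (by uniqueness of incoming edges), so the walk can be absorbed one step into the cycle; induction on length forces a common vertex, and then the same uniqueness forces $c_1=c_2$ edge by edge. Your argument instead packages the functional case via the partial successor map $\sigma$, identifies directed cycles with periodic $\sigma$-orbits, and proves the clean invariant ``vertices in the same component have eventually-merging forward $\sigma$-orbits'' by one-step induction along an undirected walk (the key point you correctly isolate: a backward step $u\leftarrow v$ along an edge $e$ with $s(e)=u$ still pins down $\sigma(u)=v$, so forward orbits merge regardless of traversal direction). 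The two arguments are doing the same induction on walk length underneath, but yours abstracts it into a statement about $\sigma$-orbits that makes the conclusion (distinct periodic orbits lie in distinct components) immediate and reusable, whereas the paper's version stays concrete and handles the two traversal directions by separate ad hoc cases.
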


\begin{proof}
Since transposition transforms directed cycles into directed cycles, it is enough to prove the statement when $E$ is the transposed of a functional graph, i.e.~every vertex receives at most one edge.
Let $c_1:=v_0e_1v_1e_2\ldots e_kv_k$ and $c_2:=w_0f_1w_1f_2\ldots f_nw_n$ be two directed cycles.
Since $E$ is connected, either $c_1$ and $c_2$ have a common vertex, or there is an undirected walk $\alpha=u_0g_0u_1g_1u_2\ldots g_lu_l$ of length $l\geq 1$
joining a vertex of $c_1$ and a vertex of $c_2$. Assume that we are in the second case.
Up to a relabelling, we can assume that $u_0=v_0$ and $u_l=w_0$.

If $u_0=r(g_0)$, then $g_0$ and $e_k$ have the same range and, by assumption, this mean that $g_0=e_k$ and the vertex $u_1=s(g_0)=s(e_k)=v_{k-1}$ belongs to $c_1$.
If $l=1$ this means that $c_1$ and $c_2$ have a common vertex ($u_1=w_0$).
If $l\geq 2$, this mean that there is an undirected walk $u_1g_1u_2\ldots g_lu_l$ of length $l-1$ joining a vertex of $c_1$ and a vertex of $c_2$.

If $u_0=s(g_0)$, since every vertex receives at most one edge, $v_1=s(e_2)$, $v_2=s(e_3)$, etc, and the undirected path is directed. From $r(g_l)=u_l=w_0=r(f_n)$
we deduce that $g_l=f_n$ and $u_{l-1}=s(g_l)=s(f_n)=w_{n-1}$ belongs to $c_2$.
If $l=1$ this means that $c_1$ and $c_2$ have a common vertex ($u_0=w_{n-1}$).
If $l\geq 2$, this mean that there is an undirected walk $u_0g_0\ldots g_{l-1}u_{l-1}$ of length $l-1$ joining a vertex of $c_1$ and a vertex of $c_2$.

Either case, if there is an undirected path of length $l$ joining $c_1$ and $c_2$, then there is also one of length $l-1$. By induction on $l$ this shows that $c_1$ and $c_2$ have a common vertex.

Next, up to a relabelling we can assume that the common vertex is $v_0=w_0$. From $r(e_k)=v_0=w_0=r(f_n)$ and the assumption that every vertex receives at most one edge, we deduce that $e_k=f_n$. Iterating the argument we deduce that $c_1=c_2$.
\end{proof}

\begin{ex}[Arborescences]
An \emph{arborescence} is a directed acyclic graph with a distinguished vertex $t$ (called the \emph{root}) such that, for any other vertex $v$, there is exactly one directed walk from $t$ to $v$. In the next picture, the graph on the left is an arborescence but the one on the right is not (but it is a
directed rooted tree):
\begin{center}
\begin{tikzpicture}[inner sep=3pt,node distance=1.5cm,font=\footnotesize]

\node[main node] (1) {};
\node (2) [main node,below of=1] {};
\node (3) [main node,right of=2] {};
\node (4) [main node,below right of=2] {};
\node (5) [main node,below left of=2] {};
\node (6) [main node,below right of=5] {};
\node (7) [main node,below left of=5] {};

\draw (1) node[right] {$t$};
\filldraw (1) circle (0.06);
\filldraw (2) circle (0.06);
\filldraw (3) circle (0.06);
\filldraw (4) circle (0.06);
\filldraw (5) circle (0.06);
\filldraw (6) circle (0.06);
\filldraw (7) circle (0.06);

\path[freccia] (1) edge (2) (2) edge (3) (2) edge (4) (2) edge (5) (5) edge (6) (5) edge (7);

\end{tikzpicture}
\hspace{2cm}
\begin{tikzpicture}[inner sep=3pt,node distance=1.5cm]

\node[main node] (1) {};
\node (2) [main node,below of=1] {};
\node (3) [main node,right of=2] {};
\node (4) [main node,below right of=2] {};
\node (5) [main node,below left of=2] {};
\node (6) [main node,below right of=5] {};
\node (7) [main node,below left of=5] {};

\filldraw (1) circle (0.06);
\filldraw (2) circle (0.06);
\filldraw (3) circle (0.06);
\filldraw (4) circle (0.06);
\filldraw (5) circle (0.06);
\filldraw (6) circle (0.06);
\filldraw (7) circle (0.06);

\path[freccia] (1) edge (2) (3) edge[red] (2) (2) edge (4) (2) edge (5) (6) edge[red] (5) (5) edge (7);

\end{tikzpicture}
\end{center}
The undirected graph underlying an arborescence is a rooted tree. Vice versa, a rooted tree becomes an arborescence if we orient all the edges so that the source is closer to the root than the range.

Every arborescence is the transposed of a functional graph (in a tree every vertex $v$ different from the root is preceeded by a unique vertex $w$, the \emph{parent} of $v$).
\end{ex}

\begin{ex}
If $E$ is a directed graph obtained from an arborescence by adding an edge with target the root, then $E$ is the transposed of a functional graph (every vertex including the root receives exactly one edge). An example is in the following picture
\begin{center}
\begin{tikzpicture}[inner sep=3pt,node distance=1.5cm]

\node[main node] (1) {};
\node (2) [main node,below of=1] {};
\node (3) [main node,right of=2] {};
\node (4) [main node,below right of=2] {};
\node (5) [main node,below left of=2] {};
\node (6) [main node,below right of=5] {};
\node (7) [main node,below left of=5] {};

\draw (1) node[right] {$t$};

\filldraw (1) circle (0.06);
\filldraw (2) circle (0.06);
\filldraw (3) circle (0.06);
\filldraw (4) circle (0.06);
\filldraw (5) circle (0.06);
\filldraw (6) circle (0.06);
\filldraw (7) circle (0.06);

\path[freccia] (1) edge (2) (2) edge (3) (2) edge (4) (2) edge (5) (5) edge (6) (5) edge (7) (5) edge[red,bend left] (1);

\end{tikzpicture}
\end{center}
\end{ex}

\begin{ex}[Cycle graphs]
A \emph{cycle graph} is a directed graph whose edges form a directed cycle:
\begin{center}
\begin{tikzpicture}

\clip (-2,-1.8) rectangle (2,1.7);

      \coordinate (1) at (0:1.5);
      \coordinate (2) at (45:1.5);
      \coordinate (3) at (90:1.5);
      \coordinate (4) at (135:1.5);
      \coordinate (5) at (180:1.5);
      \coordinate (6) at (225:1.5);
      \coordinate (7) at (270:1.5);
      \coordinate (8) at (315:1.5);

      \filldraw (1) circle (0.06);
      \filldraw (2) circle (0.06);
      \filldraw (3) circle (0.06);
      \filldraw (4) circle (0.06);
      \filldraw (5) circle (0.06);
      \filldraw (6) circle (0.06);
      \filldraw (8) circle (0.06);

      \path[->,shorten >=4pt,shorten <=4pt] (1) edge (2) (2) edge (3) (3) edge (4) (4) edge (5) (5) edge (6) (8) edge (1) (6) edge[dashed,bend right=50] (8);

\end{tikzpicture}
\end{center}
A cycle graph is its own transposed, and it is a functional graph.
\end{ex}

In view of Prop.~\ref{prop:2} and Prop.~\ref{prop:diagem}, we are interested in connected row-finite graphs $E$ without sinks whose transposed is a functional graph.
Note that there exists arborescences with infinitely many vertices without sinks.
Since we are interested in unital C*-algebras (compact quantum spaces), in the next proposition we focus on finite graphs.

\begin{prop}
Let $E$ be a finite connected graphs without sinks whose transposed is a functional graph.
Then $E$ is a cycle graph.
\end{prop}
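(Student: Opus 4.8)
The plan is to show that a finite connected graph $E$ without sinks whose transposed is a functional graph must be a cycle graph. The transposed being a functional graph means exactly that every vertex of $E$ receives at most one edge; combined with "no sinks," every vertex emits at least one edge. So I want to deduce from these local conditions, plus finiteness and connectedness, that $E$ is precisely a directed cycle.

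\textbf{Step 1: Edge count and a first structural observation.} Since every vertex receives at most one edge, $|E^1| \leq |E^0|$. Since $E$ has no sinks, every vertex emits at least one edge, so $|E^1| \geq |E^0|$ (the map $s \colon E^1 \to E^0$ is surjective). Hence $|E^1| = |E^0|$, which forces both $s$ and $r$ to be bijections $E^1 \to E^0$. In particular every vertex emits \emph{exactly} one edge and receives \emph{exactly} one edge; that is, $E$ is simultaneously a functional graph and the transposed of one, with all in- and out-degrees equal to $1$.

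\textbf{Step 2: Decomposition into cycles.} Because $s$ and $r$ are bijections, $\sigma := s \circ r^{-1} \colon E^0 \to E^0$ is a bijection (permutation) of the finite set $E^0$: starting at a vertex $v$, there is a unique incoming edge, whose source is $\sigma^{-1}(v)$, etc. Equivalently, following the unique outgoing edge from each vertex defines a permutation, so $E$ is a disjoint union of directed cycles — one for each orbit of this permutation. Alternatively, I can invoke Lemma~\ref{lemma:arbo}: a connected functional graph (or transposed thereof) has at most one directed cycle, and since every vertex lies on a directed walk into the unique cycle while simultaneously having in-degree one, there is no room for "tails."

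\textbf{Step 3: Connectedness kills everything but one cycle.} The disjoint union of cycles from Step 2 is connected (in the undirected sense) if and only if there is exactly one cycle. Hence $E$ is a single directed cycle, i.e.\ a cycle graph.

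\textbf{Main obstacle.} The only subtlety is making Step 1 airtight: "no sinks" gives out-degree $\geq 1$ everywhere and the transposed-functional hypothesis gives in-degree $\leq 1$ everywhere, and one must combine these with the (finite!) counting argument $|E^0| = |E^1|$ to upgrade both inequalities to equalities — finiteness is essential here, since an infinite arborescence without sinks satisfies all the local conditions but is not a cycle (this is precisely the remark preceding the proposition). Once degrees are pinned to exactly one in and one out, the rest is the elementary "a permutation of a finite set decomposes into cycles, and connectedness means one orbit" argument, which I would state briefly rather than belabor.
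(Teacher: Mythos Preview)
Your proof is correct and takes a genuinely different, more elementary route than the paper's. The paper first invokes Lemma~\ref{lemma:arbo} to conclude that $E$ contains exactly one directed cycle $c$, then argues by contradiction: assuming $E$ is not a cycle graph, it locates an edge with source in $c$ and range outside $c$, removes the edges of $c$, finds a sink in the resulting acyclic component, and observes that this vertex is also a sink in $E$. Your argument sidesteps all of this: the counting step $|E^0|\leq |E^1|\leq |E^0|$ immediately pins every in- and out-degree to exactly~$1$, after which the graph is visibly the functional graph of a permutation of $E^0$, hence a disjoint union of directed cycles, and connectedness forces a single cycle. Your approach is shorter, does not require Lemma~\ref{lemma:arbo}, and makes the role of finiteness completely transparent (as you note, it is exactly what turns the inequalities into equalities). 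The paper's approach, by contrast, reuses machinery already developed and illustrates how the ``at most one cycle'' lemma feeds into the classification. One small slip: with $\sigma:=s\circ r^{-1}$, the source of the unique edge into $v$ is $\sigma(v)$, not $\sigma^{-1}(v)$; this does not affect the argument.
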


\begin{proof}
Let $E$ be a finite connected graphs with no sinks in which every vertex has at most one entry. Every finite acyclic directed graph has at least one sink \cite[pag.~182]{GYZ13}.
Thus $E$ must contain a directed cycle. It follows from Lemma \ref{lemma:arbo} that $E$ contains exactly one directed cycle $c$.
Assume, by contradiction, that $E$ is not a cycle graph.
Then, there is either a vertex $w$ different from those in $c$,
or and edge $f$ different from those in $c$. In the second case, since
the edges of $c$ cannot receive other edges than the ones in $c$, the vertex
$w:=r(f)$ does not belong to $c$. In both cases, since $E$ is connected, there must be an undirected walk joining a vertex in $c$ and $w$. This means that there is at least one edge $f'$ with source in $c$, say $s(f')=v_0$, and range outside the cycle.
Let $F$ be the graph obtained from $E$ by removing all edges in $c$.
Let $F'$ be the connected component of $F$ containing $f'$.
Since $F'$ is acyclic, it has a sink $w'$.
Since $w'$ is not in $c$, there is no edge in $c$ with source $w'$, which means that $w'$ is a sink in $E$ as well, and we get a contradiction.
\end{proof}

The graph C*-algebra of a cycle graph with $n$ edges is $M_n(C(\mathbb{S}^1))$, which is not particularly interesting for us.
Allowing graphs with infinitely many vertices doesn't make things more interesting: indeed, if $E$ is the transposed of a functional graph, $C^*(E)^{U(1)}$ is commutative.

\begin{prop}
Let $E$ be the transposed of a functional graph. Then, $C^*(E)^{U(1)}$ is the diagonal C*-subalgebra of $C^*(E)$.
\end{prop}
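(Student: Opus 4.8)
The plan is to compare the two C*-algebras on their natural spanning sets of monomials. Recall that $C^*(E)^{U(1)}$ is the closed linear span of the elements $S_\alpha S_\beta^*$ with $\alpha,\beta\in E^\star$, $r(\alpha)=r(\beta)$ and $|\alpha|=|\beta|$, while the diagonal subalgebra is the C*-subalgebra generated by the commuting projections $S_\alpha S_\alpha^*$, $\alpha\in E^\star$ (these include the vertex projections, since $P_v=S_vS_v^*$ for the trivial walk $v$ of length $0$). One inclusion is immediate: each generator $S_\alpha S_\alpha^*$ of the diagonal subalgebra is one of the monomials spanning $C^*(E)^{U(1)}$, so the diagonal subalgebra is contained in the core.

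For the reverse inclusion it suffices to establish the following combinatorial fact: \emph{if $\alpha,\beta\in E^\star$ satisfy $|\alpha|=|\beta|$ and $r(\alpha)=r(\beta)$, then $\alpha=\beta$}. Granting this, every spanning monomial $S_\alpha S_\beta^*$ of $C^*(E)^{U(1)}$ equals some $S_\alpha S_\alpha^*$, hence lies in the diagonal subalgebra, and by passing to closed spans we obtain that $C^*(E)^{U(1)}$ is contained in the diagonal subalgebra, completing the proof. (Here we use that a monomial with $r(\alpha)\neq r(\beta)$ vanishes, since $S_\alpha S_\beta^*=S_\alpha P_{r(\alpha)}P_{r(\beta)}S_\beta^*=0$ by orthogonality of the vertex projections, so there is no loss in imposing the matching-range hypothesis.)

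I would prove the fact by induction on the common length $n:=|\alpha|=|\beta|$. If $n=0$, a directed walk is just a vertex, so $\alpha=r(\alpha)=r(\beta)=\beta$. If $n\geq 1$, write $\alpha=\alpha'e$ and $\beta=\beta'f$ with $e,f\in E^1$ the final edges, so that $r(e)=r(\alpha)=r(\beta)=r(f)$. This is the only place the hypothesis enters: since $E$ is the transposed of a functional graph, every vertex of $E$ receives at most one edge, and $e,f$ both have range $r(\alpha)$, which forces $e=f$. Then $r(\alpha')=s(e)=s(f)=r(\beta')$ and $|\alpha'|=|\beta'|=n-1$, so the inductive hypothesis yields $\alpha'=\beta'$ and therefore $\alpha=\beta$. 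No finiteness assumption on $E$ is needed, as the argument only involves finite directed walks; the only points requiring care are recalling the precise spanning sets of the two algebras, and there is otherwise no real obstacle.
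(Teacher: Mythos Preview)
Your proof is correct and follows essentially the same approach as the paper's own argument: both reduce to the combinatorial observation that in the transposed of a functional graph any two directed walks with the same length and the same range must coincide, so the spanning monomials $S_\alpha S_\beta^*$ of the core all have $\alpha=\beta$. You spell out the induction on the length and the easy inclusion of the diagonal in the core, whereas the paper just asserts the key fact in one line, but the content is the same.
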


\begin{proof}
$C^*(E)^{U(1)}$ is spanned by elements $S_\mu S_\nu^*$ with $r(\mu)=r(\nu)$ and $|\mu|=|\nu|$. Since every vertex receives at most one edge, any two walks with the same range and the same length are equal. Thus, $C^*(E)^{U(1)}$ is spanned by elements $S_\mu S_\mu^*$.
\end{proof}

\section{Line bundles on AF cores}\label{sec:lineAF}

Let $A$ be a unital C*-algebra and $\mathsf{BiVect}(A)$ the collection of $A$-bimodules that are finitely generated and projective as left modules.
The tensor product over $A$ is an internal operation in $\mathsf{BiVect}(A)$ (a proof is spelled out for example in \cite{DM22} or in \cite{DLP24}). On this set we consider two equivalence relations: the isomorphism as bimodules and the isomorphism as left modules, and denote by $\mathsf{BiVect}_{lr}(A)$ and
$\mathsf{BiVect}_l(A)$ the corresponding quotient sets.
Direct sum and tensor product over $A$ induce the structure of a unital semiring on $\mathsf{BiVect}_{lr}(A)$.

An important class of objects in $\mathsf{BiVect}(A)$ is that of self-Morita equivalence $A$-bimodules, \emph{aka} invertible $A$-bimodules. Invertible bimodules are automatically finitely generated and projective as left (and right) modules, and form a group under tensor product $\otimes_A$ called the \emph{Picard group} $\mathsf{Pic}(A)$.
For the Picard group in noncommutative geometry one can see e.g.~\cite{Wal11}, and in particular \cite[Ex.~2.3]{Wal11} for the classical case. Some remarks on the Picard group of a noncommutative algebra are in App.~\ref{sec:app}.

Now, let $E$ be a finite graph with no sinks and $A:=C^*(E)^{U(1)}$.
We can then consider the classes of bimodules $\mathcal{L}_k$ in \eqref{eq:Lk} (``equivariant line bundles'').
Because of \eqref{eq:Picsemiring}, isomorphism classe of these bimodules generate a sub-semiring of $\mathsf{BiVect}_{lr}(A)$, that we shall denote by $\mathsf{SPic}(A)$. For $A:=C^*(E)^{U(1)}$, we have then a sequence of maps
\begin{equation}\label{eq:semigroups}
\mathsf{SPic}(A)\lhook\joinrel\longrightarrow 
\mathsf{BiVect}_{lr}(A)\relbar\joinrel\twoheadrightarrow
\mathsf{BiVect}_l(A)\lhook\joinrel\longrightarrow K_0(A)
\end{equation}
where the first map is an inclusion of semirings, the second one is a surjective homomorphism of semigroups, and the third one is an injective homomorphism of semigroups. Injectivity of the third map follows from the fact that AF algebras have the cancellation property \cite[Ex.~7.4]{RLL00}. Because of the cancellation property, we identify the classes of $M$ in $\mathsf{BiVect}_l(A)$ and in $K_0(A)$ and denote both by $[M]$. If $M\cong A^np$ as left $A$-modules, for some $n\in\Z_+$ and some projection $p\in M_n(A)$, the class in K-theory of such a module will be denoted equivalently by $[M]$ or $[p]$.

\begin{rem}
In general, the map $\mathsf{BiVect}_{lr}(A)\to\mathsf{BiVect}_l(A)$ in \eqref{eq:semigroups} is not injective. A simple counterexample is as follows. We are going to construct a bimodule $N$ that is free
as a left module, but is not free as a bimodule.
Recall that the centralizer of an $A$-bimodule $M$ is the set
\[
Z(M):=\big\{m\in M:am=ma\;\forall\;a\in A\big\} .
\]
If $A$ is commutative, the centralizer is a sub-bimodule of $M$.
In this case, any isomorphism $M\to N$ between $A$-bimodules induces an isomorphism $Z(M)\to Z(N)$ between the centralizers. Now, let $A:=\C^2$ with componentwise multiplication, $M:=\C^2$ with bimodule structure given by the multiplication, and let $N$ be $\C^2$ as a vector space, with left action given by the multiplication, and right action given by
\[
(a_1,a_2)\triangleleft  (b_1,b_2)=(a_1b_2,a_2b_1)
\]
for all $a=(a_1,a_2)\in N$ and $b=(b_1,b_2)\in A$ (so, the right action is the multiplication twisted by an outer automorphism of the algebra). Clearly $Z(M)=A$, since the algebra is commutative.
On the other hand, for $a=(a_1,a_2)\in N$,
\[
(a_1,a_2)\triangleleft (1,0)=(0,a_2) \qquad\text{and}\qquad (1,0)\cdot (a_1,a_2)=(a_1,0)
\]
are equal if and only if $a=0$. Thus $Z(N)=0$ is not isomorphic to $Z(M)$, and $N$ is isomorphic to $M$ as a left module but not as a bimodule.

See Remark \ref{rem:SMEB} for an example of a bimodule of the form \eqref{eq:Lk} that is isomorphic to $\mathcal{L}_0$ as a left $\mathcal{L}_0$-module but not as a bimodule.
\end{rem}

The aim of this section is: to give some sufficient conditions for $K_0(A)$ to be generated by classes of line bundles \eqref{eq:Lk}, to find formulas expressing these classes in terms of classes of the vertex projections of $E$ and of the classes of matrix units in the Bratteli decomposition of $A$, to find a basis of $K_0(A)$ consisting of line bundles in the case it is free and of finite rank.

\subsection{Bratteli diagrams}

Some clue about ``multiplicative structures'' in the K-theory of AF cores comes from a study of their Bratteli diagram.

Let $E$ be a finite graph with no sinks. For $k\geq 1$ and $v\in E^0$, denote by $A_k$ the vector subspace of $C^*(E)^{U(1)}$ spanned of elements $S_\mu S_\rho^*$ with $|\mu|=|\rho|=k-1$ and $r(\mu)=r(\rho)$, and by $A_k(v)$ the subspace spanned by those with $r(\mu)=r(\rho)=v$. Let $A_0$ be the vector subspace spanned by the unit.
Then
\begin{equation}\label{eq:Ak}
A_k=\bigoplus_{v\in E^0}A_k(v)
\end{equation}
is a finite-dimensional unital C*-subalgebra of $C^*(E)^{U(1)}$, and $\bigcup_{k\geq 0}A_k$ is dense in $C^*(E)^{U(1)}$ (see \cite[pag.~27-28]{R05}, but remember the different convention for the Cuntz-Krieger relations).

In general, $A_k(v)$ may be zero for some $k$ and $v$. On the other hand, if $E$ is a graph with no sources, every walk can be extended from the left and $A_k(v)$ is non-zero for every $k$ and $v$.

For each $k\geq 1$ and $v\in E^0$, if $A_k(v)\neq 0$ the set
\[
\big\{ S_\mu S_\rho^* : |\mu|=|\rho|=k-1 , r(\mu)=r(\rho)=v \big\}
\]
is a vector space basis of $A_k(v)$ consisting of matrix units, and $A_k(v)\cong M_{m_{k-1,v}}(\C)$ is a full matrix algebra, where
$m_{k,v}$ is the number of directed walks in $E$ of length $k$ and with range $v$. This number can be computed as follows.

Recall that the \emph{adjacency matrix} of $E$ is that integer matrix $\Gamma=(\Gamma_{v,w})$, with rows and columns labelled by $E^0$, such that $\Gamma_{v,w}$ is the number of edges in $E$ from $v$ to $w$. The number of directed walks from $v$ to $w$ of length $k$ is given by the $(v,w)$ entry of the matrix $\Gamma^k=(\Gamma^{(k)}_{v,w})$. Thus,
\begin{equation}\label{eq:mvk}
m_{k,w}=\sum_{v\in E^0}\Gamma^{(k)}_{v,w} .
\end{equation}
The Bratteli diagram of $C^*(E)^{U(1)}$ can be read from the identity
\begin{equation}\label{eq:itfollows}
S_\mu S_\rho^*=\sum_{e\in E^1:s(e)=r(\mu)}S_\mu S_eS_e^*S_\rho^* ,
\end{equation}
that tells us that $A_k$ is a unital subalgebra of $A_{k+1}$, and $A_k(v)$ embeds in $A_{k+1}(w)$ for every $w\in E^0$ such that there is an edge $e\in E^1$ with source $v$ and range $w$ (the multiplicity of the embedding is the number of edges from $v$ to $w$).

We will draw Bratteli diagrams horizontally, rather than vertically, to save space. We arrange nodes of the diagram in rows and columns, order the vertices, and count the columns starting from $0$. Each row correspond to a vertex in the graph $E$. Each column correspond to a finite-dimensional subalgebra. The node in row $i$ and column $k$ corresponds to the subalgebra $A_k(v_i)$, where $v_i$ is the $i$-th node.

\begin{ex}\label{ex:M2C}
Let $E$ be the following graph:
\begin{center}
\begin{tikzpicture}[inner sep=3pt,font=\footnotesize]

\clip (-0.5,-0.5) rectangle (2.5,1.3);

\node[main node] (1) {};
\node (2) [main node,right of=1] {};

\filldraw (1) circle (0.06) node[below=2pt] {$1$};
\filldraw (2) circle (0.06) node[below=2pt] {$2$};

\path[freccia] (2) edge[ciclo] (2);
\path[freccia] (1) edge (2);

\end{tikzpicture}
\end{center}
It is not difficult to show that
$C^*(E)\cong M_2(C(\mathbb{S}^1))$.
The Bratteli diagram of $C^*(E)^{U(1)}$ is:\medskip
\begin{center}
\begin{tikzpicture}[-stealth,yscale=1.5,xscale=2,semithick]

\node (o) at (0.2,0.5) {$\C$};

\node (a1) at (1,1) {$\C$};
\node (b1) at (1,0) {$\C$};

\foreach \k in {2,...,5} {
	\node (b\k) at (\k,0) {$M_2(\C)$};
}

\coordinate (b6) at (5.65,0);

\node[transform canvas={yshift=-0.7pt}] at ($(b6)+(0.2,0)$) {$\cdots$};

\draw (o) -- (a1);
\draw (o) -- (b1);
\draw (a1) -- (b2);

\foreach[evaluate=\k as \knext using int(\k+1)] \k in {1,...,5} {
	\draw (b\k) -- (b\knext);
}

\end{tikzpicture}
\end{center}
In this example, we write explicitly the matrix algebra corresponding to each node to make it more clear.
The first row corresponds to the vertex $1$, the second to the vertex $2$. We see that the first row ends at the column $1$ (there are not walks of length greater than $0$ with range $1$).
The core is $C^*(E)^{U(1)}\cong M_2(\C)$.
\end{ex}

\begin{ex}\label{ex:againM2C}
Let $E$ be the following graph:\medskip
\begin{center}
\begin{tikzpicture}[inner sep=3pt,font=\footnotesize]

\node[main node] (1) {};
\node (2) [main node,right of=1] {};

\filldraw (1) circle (0.06);
\filldraw (2) circle (0.06);

\path[freccia]
	(1) edge[bend right=20] (2)
	(2) edge[bend right=20] (1);
	
\end{tikzpicture}\medskip
\end{center}
Like in Example \ref{ex:M2C}, one has again $C^*(E)\cong M_2(C(\mathbb{S}^1))$.
But now the Bratteli diagram of $C^*(E)^{U(1)}$ is\medskip\smallskip
\begin{center}
\begin{tikzpicture}[-stealth,shorten >=3pt,shorten <=3pt]

\coordinate (o) at (0.65,0.7);
\filldraw (o) circle (0.05);

\foreach \k in {1,...,5} {
	\coordinate (a\k) at (1.3*\k,1.4);
	\coordinate (b\k) at (1.3*\k,0);
	\filldraw (a\k) circle (0.05);
	\filldraw (b\k) circle (0.05);
}

\coordinate (a6) at (1.3*6,1.4);
\coordinate (b6) at (1.3*6,0);

\node[transform canvas={yshift=-0.5pt}] at ($(a6)+(0.2,0)$) {$\cdots$};
\node[transform canvas={yshift=-0.5pt}] at ($(b6)+(0.2,0)$) {$\cdots$};

\draw (o) -- (a1);
\draw (o) -- (b1);

\foreach[evaluate=\k as \knext using int(\k+1)] \k in {1,...,5} {
	\draw (a\k) -- (b\knext);
	\draw (b\k) -- (a\knext);
}

\end{tikzpicture}\smallskip
\end{center}
From the Bratteli diagram we recognize that $C^*(E)^{U(1)}\cong\C^2$. (The graph C*-algebra is isomorphic to the one in Example \ref{ex:M2C}, but the isomorphism is not $U(1)$-equivariant as the cores are not isomorphic.)
\end{ex}

\begin{ex}[The $UHF(n^\infty)$ algebra]
Let $n\geq 2$ and let $B_n$ be the graph of the Cuntz algebra in Example~\ref{ex:Cuntz}. The Bratteli diagram of $C^*(B_n)^{U(1)}$ has one node in each column, one arrow between the first two nodes, and $n$ parallel arrows between any other two consecutive nodes
\medskip
\begin{center}
\begin{tikzpicture}[-stealth,shorten >=4pt,shorten <=5pt]

\coordinate (o) at (-1.3*1,0);
\fill (o) circle (0.1);

\foreach \k in {0,...,5} {
	\coordinate (v\k) at (1.3*\k,0);
	\fill (v\k) ellipse (0.08 and 0.2);
}

\coordinate (v6) at (6*1.3,0);

\draw (o) -- (v0);

\node[transform canvas={yshift=-0.5pt}] at ($(v6)+(0.2,0)$) {$\cdots$};

\foreach[evaluate=\k as \knext using int(\k+1)] \k in {0,...,5} {
	\draw[transform canvas={yshift=3pt}] (v\k) -- (v\knext);
	\draw[transform canvas={yshift=0pt}] (v\k) -- (v\knext);
	\draw[transform canvas={yshift=-3pt}]   (v\k) -- (v\knext);
}

\end{tikzpicture}\smallskip
\end{center}
The first node here corresponds to the subalgebra of scalar multiple of the identity. The second node to the subalgebra spanned by $S_\mu S_\rho^*$ with $|\mu|=|\rho|=0$, that is the scalar multiples of the unique vertex projection, which is again the identity element.
\end{ex}

\begin{ex}[The noncommutative space of Penrose tilings]\label{ex:penrose}
Let $\mathcal{P}$ be the graph
\begin{center}
\begin{tikzpicture}[inner sep=3pt,font=\footnotesize,baseline={([yshift=-3pt]current bounding box.center)}]

\clip (-1.5,-0.6) rectangle (2.4,0.7);

\node[main node] (1) {};
\node (2) [main node,right of=1] {};

\filldraw (1) circle (0.06) node[below=2pt] {$0$};
\filldraw (2) circle (0.06) node[below=2pt] {$1$};

\path[freccia,rotate=90] (1) edge[ciclo] (1);

\path[freccia]
	(1) edge[bend right=20] (2)
	(2) edge[bend right=20] (1);
	
\end{tikzpicture}\smallskip
\end{center}
For reasons that will be clear soon, the vertices of $\mathcal{P}$ are labelled $0$ and $1$.
A simple computation of K-theory using the adjacency matrix gives
$K_0\big(C^*(\mathcal{P})\big)\cong K_1\big(C^*(\mathcal{P})\big)\cong 0$.
Since the adjacency matrix is irreducible and it is not a permutation matrix,
one immediately deduces from \cite[Thm.~6.5]{Ror95} that
\[
C^*(\mathcal{P})\cong\mathcal{O}_2 .
\]
One can write down an explicit isomorphism in terms of covariant and contravariant induction as developed in \cite{HT24cov,HT24} (see also \cite{BS24}). The details can be found in \cite[Example 7.2]{CDH25}.

The Bratteli diagram of $C^*(\mathcal{P})^{U(1)}$ is\medskip\smallskip
\begin{center}
\begin{tikzpicture}[-stealth,shorten >=3pt,shorten <=3pt]

\coordinate (o) at (0.65,0.7);
\filldraw (o) circle (0.05);

\foreach \k in {1,...,5} {
	\coordinate (a\k) at (1.3*\k,1.4);
	\coordinate (b\k) at (1.3*\k,0);
	\filldraw (a\k) circle (0.05);
	\filldraw (b\k) circle (0.05);
}

\coordinate (a6) at (1.3*6,1.4);
\coordinate (b6) at (1.3*6,0);

\node[transform canvas={yshift=-0.5pt}] at ($(a6)+(0.2,0)$) {$\cdots$};
\node[transform canvas={yshift=-0.5pt}] at ($(b6)+(0.2,0)$) {$\cdots$};

\draw (o) -- (a1);
\draw (o) -- (b1);

\foreach[evaluate=\k as \knext using int(\k+1)] \k in {1,...,5} {
	\draw (a\k) -- (a\knext);
	\draw (a\k) -- (b\knext);
	\draw (b\k) -- (a\knext);
}

\end{tikzpicture}\smallskip
\end{center}
Since for every pair of vertices $v,w\in \mathcal{P}^0$ there is at most one arrow from $v$ to $w$, a right-infinite walk in $\mathcal{P}$ (or in the Bratteli diagram) is simply a sequence of vertices. Looking at the graph, we see that a right-infinite walk is a binary sequence where $1$ is always followed by a $0$. It is well known that $C^*(\mathcal{P})^{U(1)}$ is the groupoid C*-algebra of the relation of tail equivalence on this set of right-infinite walks (it is an \'etale groupoid with the correct topology).
It follows that $C^*(\mathcal{P})^{U(1)}$ is the AF algebra of the noncommutative space parameterizing Penrose tilings, see e.g.~\cite[Example 6.23]{DanPen}.
\end{ex}

\begin{ex}[The quantum projective line]
Let $E=\Sigma_1$ be the graph of $SU_q(2)$ in \eqref{eq:SUq2}.
The Bratteli diagram of $C^*(\Sigma_1)^{U(1)}\cong C(\C P^1_q)$ is\medskip
\begin{center}
\begin{tikzpicture}[-stealth,shorten >=3pt,shorten <=3pt,scale=1.5]

\coordinate (o) at (0.2,0.5);
\filldraw (o) circle (0.03);
\node at (6.2,0.5) {};

\foreach \k in {1,...,5} {
	\coordinate (a\k) at (\k,1);
	\coordinate (b\k) at (\k,0);
	\filldraw (a\k) circle (0.03);
	\filldraw (b\k) circle (0.03);
}

\coordinate (a6) at (6,1);
\coordinate (b6) at (6,0);

\node[transform canvas={yshift=-0.5pt}] at ($(a6)+(0.15,0)$) {$\cdots$};
\node[transform canvas={yshift=-0.5pt}] at ($(b6)+(0.15,0)$) {$\cdots$};

\draw (o) -- (a1);
\draw (o) -- (b1);

\foreach[evaluate=\k as \knext using int(\k+1)] \k in {1,...,5} {
	\draw (a\k) -- (a\knext);
	\draw (a\k) -- (b\knext);
	\draw (b\k) -- (b\knext);
}

\end{tikzpicture}\smallskip
\end{center}
The node in column $0$ represents the subalgebra $A_0\cong\C$. The nodes (two) in the $k$-th column, $k\geq 1$, represent the two summands in $A_k$, whose dimension can be computed by looking at the diagram. One finds that $A_k\cong\C\oplus M_k(\C)$, where $\C$ is the top node and $M_k(\C)$ the bottom node. It is well known (and clear from the diagram) that $C(\C P^1_q)$ is the minimal unitization of the C*-algebra of compact operators. (This is the example of non-simple Bratteli diagram that is, e.g., in \cite[Fig.~6.4]{DanPen}.)
\end{ex}

\begin{ex}[Quantum projective spaces]
Let us look at the graph $E=\Sigma_3$ of Example \ref{ex:qsphere}. This should be enough to understand what happens in the general case of $\Sigma_{2n-1}$.
The Bratteli diagram of $C^*(\Sigma_3)^{U(1)}\cong C(\C P^3_q)$ is:\medskip
\begin{equation}\label{eq:CP3q}
\begin{tikzpicture}[-stealth,shorten >=3pt,shorten <=3pt,scale=1.5,baseline={([yshift=-6pt]current bounding box.center)}]

\coordinate (o) at (1.5*0.2,1.5);
\node at (1.5*6.2,0.5) {};

\foreach \k in {1,...,5} {
	\coordinate (a\k) at (1.5*\k,3);
	\coordinate (b\k) at (1.5*\k,2);
	\coordinate (c\k) at (1.5*\k,1);
	\coordinate (d\k) at (1.5*\k,0);
}


\filldraw (o) circle (0.03);
\foreach \k in {1,...,5} {
	\filldraw (a\k) circle (0.03);
	\filldraw (b\k) circle (0.03);
	\filldraw (c\k) circle (0.03);
	\filldraw (d\k) circle (0.03);
}

\coordinate (a6) at (1.5*6,3);
\coordinate (b6) at (1.5*6,2);
\coordinate (c6) at (1.5*6,1);
\coordinate (d6) at (1.5*6,0);

\node[transform canvas={yshift=-0.5pt}] at ($(a6)+(0.15,0)$) {$\cdots$};
\node[transform canvas={yshift=-0.5pt}] at ($(b6)+(0.15,0)$) {$\cdots$};
\node[transform canvas={yshift=-0.5pt}] at ($(c6)+(0.15,0)$) {$\cdots$};
\node[transform canvas={yshift=-0.5pt}] at ($(d6)+(0.15,0)$) {$\cdots$};

\draw (o) -- (a1);
\draw (o) -- (b1);
\draw (o) -- (c1);
\draw (o) -- (d1);

\foreach[evaluate=\k as \knext using int(\k+1)] \k in {1,...,5} {
	\draw (a\k) -- (a\knext);
	\draw[red!80!black] (a\k) -- (b\knext);
	\draw[blue!80!black] (a\k) -- (c\knext);
	\draw[green!80!black] (a\k) -- (d\knext);
	\draw (b\k) -- (b\knext);
	\draw[red!80!black] (b\k) -- (c\knext);
	\draw[blue!80!black] (b\k) -- (d\knext);
	\draw (c\k) -- (c\knext);
	\draw[red!80!black] (c\k) -- (d\knext);
	\draw (d\k) -- (d\knext);
}

\end{tikzpicture}\medskip
\end{equation}
In general, the diagram of $C(\C P^{n-1}_q)=C^*(\Sigma_{2n-1})^{U(1)}$ has, besides the root, $n$ rows of nodes and an arrow from the node in row $i$ column $k$ to the node in row $j$ column $k+1$, for all $1\leq i\leq j\leq n$ and all $k\geq 1$. The root corresponds to $A_0\cong\C$, as usual, while the $k$-th node in the $i$-th row corresponds to the subalgebra $A_k(i)$.
The nodes in the bottom row form an hereditary and saturated subset, and the quotient by the corresponding ideal is the map $C(\C P^{n-1})\to C(\C P^{n-2})$.
\end{ex}

\subsection{Generators of K-theory}
In this section, $E$ is a finite graph with no sinks. The subalgebras $A_k$ and $A_k(v)$ of $C^*(E)^{U(1)}$ are those defined in the previous section (note that $A_k(v)$ may be $0$, if there is no walk of length $k-1$ with range $v$).
For each vertex $v$ and each $k\in\N$, choose a walk $\mu_{v,k}$ of length $k$ with range $v$, if it exists, and define the projection
\begin{equation}\label{eq:genQ}
Q_{v,k}:=S_{\mu_{v,k}}S_{\mu_{v,k}}^*  \in A_{k+1}(v) .
\end{equation}
If such a walk doesn't exists, then $A_{k+1}(v)=0$ and we set $Q_{v,k}:=0$.
Note that $Q_{v,0}=P_v$ is the vertex projection of $v\in E^0$.

\begin{prop}\label{prop:generators}
For every $k_0\in\N$, the set
\[
\big\{ [Q_{v,k}]:v\in E^0,k\geq k_0 \big\}
\]
generates $K_0\big(C^*(E)^{U(1)}\big)$.
\end{prop}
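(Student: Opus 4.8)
The plan is to exploit the AF structure of $A:=C^*(E)^{U(1)}$ together with the explicit description of its local subalgebras $A_m$. Recall from \eqref{eq:Ak} and \eqref{eq:itfollows} that $A_0\subseteq A_1\subseteq A_2\subseteq\cdots$ is an increasing chain of finite-dimensional C*-subalgebras whose union is dense in $A$, so $A=\varinjlim_m A_m$. Writing $\iota_m\colon A_m\hookrightarrow A$ for the inclusion, continuity of $K_0$ under inductive limits of C*-algebras (see e.g.~\cite{RLL00}) gives $K_0(A)=\varinjlim_m K_0(A_m)$; in particular the subgroups $(\iota_m)_*K_0(A_m)$ increase with $m$ and their union is all of $K_0(A)$.

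Next I would identify generators of each $K_0(A_m)$. Fix $m\geq 1$. By the discussion preceding \eqref{eq:mvk}, for each $v\in E^0$ either $A_m(v)=0$ or $A_m(v)$ is a full matrix algebra with vector-space basis the matrix units $\{S_\mu S_\rho^*:|\mu|=|\rho|=m-1,\ r(\mu)=r(\rho)=v\}$. In the latter case the diagonal element $Q_{v,m-1}=S_{\mu_{v,m-1}}S_{\mu_{v,m-1}}^*$ of \eqref{eq:genQ} is a rank-one projection in $A_m(v)$, hence a minimal projection, so its class generates $K_0(A_m(v))\cong\Z$; when $A_m(v)=0$ we have $Q_{v,m-1}=0$. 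Since $A_m=\bigoplus_{v\in E^0}A_m(v)$ and $K_0$ sends finite direct sums to direct sums, the family $\{[Q_{v,m-1}]:v\in E^0\}$ is a generating set (in fact, after discarding the zeros, a $\Z$-basis) of $K_0(A_m)$. Consequently $\{[Q_{v,m-1}]:v\in E^0\}$ generates the subgroup $(\iota_m)_*K_0(A_m)\subseteq K_0(A)$.

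Finally I would let $m$ range over all $m\geq k_0+1$. This is a cofinal set of indices in the directed system, so $\bigcup_{m\geq k_0+1}(\iota_m)_*K_0(A_m)=\bigcup_{m\geq 1}(\iota_m)_*K_0(A_m)=K_0(A)$ by the first paragraph. Putting $k:=m-1$, the corresponding union of generating sets is exactly $\{[Q_{v,k}]:v\in E^0,\ k\geq k_0\}$, which therefore generates $K_0(A)$.

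I do not expect a serious obstacle: the argument is a routine combination of continuity of $K_0$ under inductive limits with the computation of $K_0$ for finite-dimensional C*-algebras. The only points needing a little care are the index bookkeeping between $Q_{v,k}$ and the subalgebra $A_{k+1}$ in which it lives, and the remark that restricting to $k\geq k_0$ still exhausts $K_0(A)$ — which is just cofinality of $\{m:m\geq k_0+1\}$.
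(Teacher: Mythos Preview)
Your proposal is correct and follows essentially the same approach as the paper: reduce to the finite-dimensional subalgebras $A_m$ with $m\geq k_0+1$, and use that the $Q_{v,m-1}$ are minimal projections in the simple summands $A_m(v)$, hence generate $K_0(A_m)$. The only cosmetic difference is that the paper argues by approximating projections in $A$ by projections in some $A_m$ (using that $K_0$ of an AF algebra is generated by scalar projections), whereas you invoke continuity of $K_0$ under inductive limits; the two are equivalent here.
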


\begin{proof}
Recall also that, if $A$ is an AF algebra, then $K_0(A)$ is generated by classes of projections in $A$ (we call them \emph{scalar} projections), cf.~e.g.~\cite{Led14}.
Since $\bigcup_{k\in\N}A_k$ is dense in $A:=C^*(E)^{U(1)}$, for each projection $p_0\in A$ there exists a $k\in\N$ and a projection $p_1\in A_k$ such that $\|p_0-p_1\|<1$.
It follows that $[p_0]=[p_1]$ in $K_0(A)$ (see e.g.~\cite[Prop.~2.2.4]{RLL00}).
Thus, every class $[p_0]\in K_0(A)$ is represented by a projection $p_1\in A_k$ for some $k\in\N$.
Due to the inclusion $A_k\subseteq A_{k+1}$, we can assume that $p_1\in A_k$ for some $k\geq k_0+1$.
It follows that $K_0(A)$ is generated by scalar projections in $\bigcup_{k\geq k_0+1}A_k$.
But $A_k(v)$ is a full matrix algebra for every $v\in E^0$, and for all $[p]\in A_k(v)$ the projection $p$ is equivalent to an integer multiple of a diagonal matrix unit (any diagonal matrix unit in $A_k(v)$).
In particular, $[p]=r[Q_{v,k-1}]$ where $r$ is the rank of $p$.
\end{proof}

\begin{lemma}\label{lemma:Gamma}
For every $v\in E^0$ and $k\in\N$ such that $A_k(v)\neq 0$, one has
\begin{equation}\label{eq:Gamma}
[Q_{v,k}]=\sum_{w\in E^0}\Gamma_{v,w}[Q_{w,k+1}] ,
\end{equation}
where $\Gamma=(\Gamma_{v,w})$ is the adjacency matrix of the graph $E$.
\end{lemma}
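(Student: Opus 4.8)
The plan is to expand $Q_{v,k}$ by means of the Cuntz--Krieger identity \eqref{eq:itfollows} and then to recognise each of the resulting summands as a projection Murray--von Neumann equivalent to one of the chosen representatives $Q_{w,k+1}$. Write $\mu:=\mu_{v,k}$ for the fixed directed walk of length $k$ with $r(\mu)=v$ used to define $Q_{v,k}$ in \eqref{eq:genQ}. Since $E$ is finite with no sinks, $v$ is a regular vertex, so applying \eqref{eq:itfollows} to $S_\mu S_\mu^*$ and using $S_\mu S_e=S_{\mu e}$ (valid because $r(\mu)=v=s(e)$) yields
\[
Q_{v,k}=S_\mu S_\mu^*=\sum_{e\in s^{-1}(v)}S_\mu S_eS_e^*S_\mu^*=\sum_{e\in s^{-1}(v)}S_{\mu e}S_{\mu e}^* .
\]

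The next step is to observe that the projections $\{\,S_{\mu e}S_{\mu e}^*:e\in s^{-1}(v)\,\}$ are mutually orthogonal, which is what allows their $K_0$-classes to be added. Indeed, for $e\neq f$ in $s^{-1}(v)$ the walks $\mu e$ and $\mu f$ have the same length $k+1$ and neither is a prefix of the other, so $S_{\mu e}^*S_{\mu f}=0$ by \eqref{eq:relations}; hence $S_{\mu e}S_{\mu e}^*$ and $S_{\mu f}S_{\mu f}^*$ are orthogonal. Consequently, writing $A:=C^*(E)^{U(1)}$,
\[
[Q_{v,k}]=\sum_{e\in s^{-1}(v)}\bigl[S_{\mu e}S_{\mu e}^*\bigr]\qquad\text{in }K_0(A).
\]

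It then remains to show $[S_{\mu e}S_{\mu e}^*]=[Q_{r(e),k+1}]$ for each $e\in s^{-1}(v)$. Fix such an $e$ and put $w:=r(e)$. Since $\mu e$ is a directed walk of length $k+1$ with range $w$, a walk of that length and range exists, so $\mu_{w,k+1}$ is defined; both $S_{\mu e}S_{\mu e}^*$ and $Q_{w,k+1}=S_{\mu_{w,k+1}}S_{\mu_{w,k+1}}^*$ are then diagonal matrix units of the full matrix algebra $A_{k+2}(w)$, and explicitly the element $u:=S_{\mu e}S_{\mu_{w,k+1}}^*$ is a partial isometry with $u^*u=Q_{w,k+1}$ and $uu^*=S_{\mu e}S_{\mu e}^*$ — this uses only $S_{\mu e}^*S_{\mu e}=P_w=S_{\mu_{w,k+1}}^*S_{\mu_{w,k+1}}$, which follows by iterating \eqref{eq:CK1}, together with $S_\sigma P_{r(\sigma)}=S_\sigma$. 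Hence $[S_{\mu e}S_{\mu e}^*]=[Q_{w,k+1}]$. Summing over $e$ and grouping the edges emitted by $v$ according to their range — there being exactly $\Gamma_{v,w}$ of them with range $w$ — gives
\[
[Q_{v,k}]=\sum_{e\in s^{-1}(v)}[Q_{r(e),k+1}]=\sum_{w\in E^0}\Gamma_{v,w}[Q_{w,k+1}],
\]
which is \eqref{eq:Gamma}. The only point demanding real attention is the orthogonality in the second step; everything else is a direct computation with the Cuntz--Krieger relations, and the detour through the finite-dimensional subalgebra $A_{k+2}(w)$ is made fully explicit by the partial isometry $u$ above.
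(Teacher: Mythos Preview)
Your proof is correct and follows essentially the same route as the paper: expand $Q_{v,k}$ via \eqref{eq:itfollows}, note the resulting projections are mutually orthogonal, exhibit the Murray--von Neumann equivalence with $Q_{r(e),k+1}$ via the partial isometry $S_{\mu e}S_{\mu_{w,k+1}}^*$ (the paper calls it $V$), and then group by range. If anything, you supply slightly more detail on the orthogonality and on why $\mu_{w,k+1}$ exists.
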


\begin{proof}
From \eqref{eq:genQ} and \eqref{eq:itfollows} we get
\[
Q_{v,k}=\sum_{e\in E^1:s(e)=v}S_{\mu_{v,k}}S_eS_e^*S_{\mu_{v,k}}^* .
\]
Since the projections $S_{\mu_{v,k}}S_eS_e^*S_{\mu_{v,k}}^*$ and $S_{\mu_{v,k}}S_fS_f^*S_{\mu_{v,k}}^*$ are orthogonal, for every $e\neq f$, we get
\[
[Q_{v,k}]=\sum_{e\in E^1:s(e)=v}[S_{\mu_{v,k}}S_eS_e^*S_{\mu_{v,k}}^*] .
\]
Next, called $V:=S_{\mu_{v,k}}S_eS_{\mu_{r(e),k+1}}^*$, the projection $VV^*$ is von Neumann
equivalent to
\[
V^*V=S_{\mu_{r(e),k+1}}S_e^*S_{\mu_{v,k}}^*S_{\mu_{v,k}}S_eS_{\mu_{r(e),k+1}}^*=S_{\mu_{r(e),k+1}}S_{\mu_{r(e),k+1}}^*=Q_{r(e),k+1} .
\]
Called $w=r(e)$, we get
\[
[Q_{v,k}]=\sum_{e\in E^1:s(e)=v,r(e)=w}[Q_{w,k+1}] .
\]
The number of edges with source $v$ and range $w$ is by definition $\Gamma_{v,w}$.
\end{proof}

In general, the adjacency matrix need not be invertible (over the integers).
In Example \ref{ex:Cuntz}, $\Gamma=(n)$ is not invertible over the integers if $n>1$.

\begin{prop}\label{prop:Gammainverse}
Let $n:=|E^0|$.
If $\Gamma\in GL(n,\Z)$, then \mbox{$K_0\big(C^*(E)^{U(1)}\big)\cong\Z^n$} is free
and a basis is given by the set $\{P_v:v\in E^0\}$ of classes of vertex projections.
\end{prop}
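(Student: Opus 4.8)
The plan is to compute $K_0(A)$, $A:=C^*(E)^{U(1)}$, from the Bratteli diagram of $A$ described in the previous subsection, exploiting that every connecting map is, up to a choice of bases, multiplication by the adjacency matrix $\Gamma$, which is invertible over $\Z$ by hypothesis. First I would observe that $\Gamma\in GL(n,\Z)$ forces $E$ to have no sources, since a source would produce a zero column of $\Gamma$ (similarly, ``no sinks'' rules out zero rows, as it must). As recalled before Prop.~\ref{prop:generators}, when $E$ has no sources every vertex $v$ receives a directed walk of each length $k\geq 0$, so all the blocks $A_k(v)$ are nonzero full matrix algebras and all the projections $Q_{v,k}$ of \eqref{eq:genQ} are genuine minimal projections of $A_{k+1}(v)$.

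Next, for each $k\geq 1$ the finite-dimensional subalgebra $A_k=\bigoplus_{v\in E^0}A_k(v)$ is a direct sum of $n=|E^0|$ nonzero matrix algebras, so $K_0(A_k)\cong\Z^{E^0}$ with basis the classes $[Q_{v,k-1}]$ of the minimal projections, one per block. The crucial point is that the inclusion $A_k\hookrightarrow A_{k+1}$ induces on $K_0$, in these bases, the homomorphism $\Z^{E^0}\to\Z^{E^0}$ determined by $[Q_{v,k-1}]\mapsto\sum_{w\in E^0}\Gamma_{v,w}[Q_{w,k}]$ --- this is exactly the computation carried out in the proof of Lemma~\ref{lemma:Gamma}, all of whose projections and von Neumann equivalences already live inside $A_{k+1}$. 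The matrix of this homomorphism is the transpose of $\Gamma$, so since $\Gamma\in GL(n,\Z)$ it is an automorphism of $\Z^{E^0}$; hence every connecting map of the inductive system $\big(K_0(A_k)\big)_{k\geq 1}$ is an isomorphism.

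Because $A$ is the closure of $\bigcup_k A_k$, continuity of $K_0$ gives $K_0(A)=\varinjlim_k K_0(A_k)$, and as all maps from stage $k=1$ on are isomorphisms, the canonical map $K_0(A_1)\to K_0(A)$ is an isomorphism. Finally $A_1$ is the linear span of the mutually orthogonal projections $\{P_v:v\in E^0\}$ (length-$0$ walks are vertices, and $S_vS_v^*=P_v$), so $A_1\cong\C^{E^0}$ and $K_0(A_1)\cong\Z^{E^0}$ with basis $\{[P_v]\}$; note $P_v=Q_{v,0}$. Transporting this basis through $K_0(A_1)\xrightarrow{\ \sim\ }K_0(A)$ shows that $\{[P_v]:v\in E^0\}$ is a $\Z$-basis of $K_0\big(C^*(E)^{U(1)}\big)\cong\Z^n$. (Alternatively, surjectivity of $\Z^n\to K_0(A)$, $e_v\mapsto[P_v]$, follows directly from Prop.~\ref{prop:generators} together with Lemma~\ref{lemma:Gamma}: since $\Gamma^{-1}$ has integer entries, iterating \eqref{eq:Gamma} backwards expresses each $[Q_{v,k}]$ as an integral combination of the $[P_w]$; the real content is then injectivity, which the inductive-limit argument supplies.)

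I do not expect a serious obstacle here: the only two points needing care are the remark that invertibility of $\Gamma$ forces $E$ to have no sources --- so that the blocks $A_k(v)$ never degenerate and Lemma~\ref{lemma:Gamma} applies for all $v$ and all $k$ --- and the precise identification of the Bratteli connecting maps with the action of $\Gamma$, both of which are essentially already contained in what precedes.
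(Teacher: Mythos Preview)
Your proof is correct and follows essentially the same approach as the paper: both identify the connecting maps $K_0(A_k)\to K_0(A_{k+1})$ with multiplication by (the transpose of) $\Gamma$, and use $\Gamma\in GL(n,\Z)$ together with continuity of $K_0$ to conclude. Your organization is in fact slightly more economical than the paper's: the paper first argues generation by the $[P_v]$ (via Prop.~\ref{prop:generators} and inverting Lemma~\ref{lemma:Gamma}), then separately observes that $K_0(A)$ is torsion-free and finitely generated hence free, and finally computes the rank via the inductive-limit diagram with vertical isomorphisms $\Gamma^{-k}$; you instead note directly that once all connecting maps are isomorphisms the canonical map $K_0(A_1)\to K_0(A)$ is an isomorphism, which yields freeness, rank, and the explicit basis $\{[P_v]\}$ in one stroke.
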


\begin{proof}
Let $A:=C^*(E)^{U(1)}$. Since $\Gamma$ is invertible, $E$ has no sources (and no sinks), and $A_k(v)$ and $Q_{v,k}$ are non-zero for all $v$ and $k$.

From Prop.~\ref{prop:generators} we know that the set $\big\{ [Q_{w,k}]:w\in E^0,k\in\N \big\}$ generates
$K_0(A)$.
In Lemma~\ref{lemma:Gamma} we can invert the relation and write each $Q_{w,k+1}$ as a combination of $Q_{v,k}$'s and then, by iterating, as a combination of the projections $Q_{v,0}=P_v$.
Thus, $K_0(A)$ is generated by vertex projections. 

The $K_0$-group of an AF algebra is torsion-free (see, e.g., \cite{Led14}), and in the present case since it is finitely generated it must be free.
To show that the vertex projections form a basis, it remains to prove that the rank of $K_0(A)$ is equal to the number $n$ of such projections.

If $\Gamma$ is invertible, for every $k\geq 0$, $\Gamma^k$ is invertible, which means that it has at least one non-zero element in each row and column. Thus, every $v\in E^0$ is the range of at least one walk of length $k$, and $A_k(v)$ in \eqref{eq:Ak} is non-zero. So, $A_k$ has $n$ non-zero summands, and
\[
K_0(A_k)\cong\Z^n , \qquad\forall\;k\in\Z.
\]
To compute the inductive limit, we recall that the map $K_0(A_k)\to K_0(A_{k+1})$ induced by the inclusion $A_k\to A_{k+1}$, under the identification with $\Z^n$, is given by the multiplication by the \emph{matrix of partial multiplicities} of the Bratteli diagram, which in this case is just the adjacency matrix $\Gamma$ of the graph $E$ (see e.g.~\cite[Sect.~6.3.3]{DanPen}). Thus, $K_0(A)$ is the inductive limit of the first row of the following commutative diagram:
\begin{center}
\begin{tikzpicture}[-To]

\node (a0) at (0,1.8) {$\Z^n$};
\node (b0) at (0,0) {$\Z^n$};

\foreach \k in {1,...,4} {
	\node (a\k) at (2*\k,1.8) {$\Z^n$};
	\node (b\k) at (2*\k,0) {$\Z^n$};
}

\node[transform canvas={yshift=-1pt}] (a5) at (9.5,1.8) {$\cdots$};
\node[transform canvas={yshift=-1pt}] (b5) at (9.5,0) {$\cdots$};

\begin{scope}[font=\footnotesize]

\draw[To-] (b0) --node[fill=white]{$\id$} (a0);

\foreach[evaluate=\k as \knext using int(\k+1)] \k in {0,...,3} {
\draw (a\k) --node[above]{$\Gamma$} (a\knext);
\draw (b\k) --node[above]{$\id$} (b\knext);
}

\foreach[evaluate=\k as \knext using int(\k+1)] \k in {1,...,4} {
\draw[To-] (b\k) --node[fill=white]{$\Gamma^{-\k}$} (a\k);
}

\draw (a4) -- (a5);
\draw (b4) -- (b5);
\end{scope}

\end{tikzpicture}
\end{center}
Since the vertical arrows are isomorphisms, the first row and the second row have the same inductive limit, which looking at the second row we realize is $\Z^n$.
\end{proof}

Notice that it is well-known that $K_0(C^*(E))$ is generated by the vertex projections of $E$, cf.~\cite[Prop.~3.8 (1)]{CET12}. But here we are interested in the K-theory of the AF core.
The graph $E=B_n$ of the Cuntz algebra $\mathcal{O}_n$, $n>1$, is an example where $K_0\big(C^*(E)^{U(1)}\big)$ is \emph{not} generated by the vertex projection(s) of the graph $E$.

\subsection{Projections for line bundles}
In this section, $E$ is a finite graph with no sinks, $\mathcal{L}_0=C^*(E)^{U(1)}$, $\mathcal{L}_k$ is the $\mathcal{L}_0$-bimodule in \eqref{eq:Lk}.

\begin{lemma}
Let $W_k$ be the set of directed walks in $E$ of length $k\geq 0$.
Then
\begin{equation}\label{eq:unitB}
\sum_{\mu\in W_k}S_\mu S_\mu^*=1 .
\end{equation}
\end{lemma}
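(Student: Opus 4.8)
The plan is to prove \eqref{eq:unitB} by induction on $k\geq 0$, using the Cuntz-Krieger relation \eqref{eq:CK3} together with the fact that $E$ has no sinks (so that every vertex is regular, since $E$ is also finite). For $k=0$, the set $W_0$ is exactly $E^0$, and $S_v S_v^* = P_v$ for the length-zero walk at $v$, so the claim reduces to the definition of the unit \eqref{eq:unitA}, namely $\sum_{v\in E^0} P_v = 1$. This uses that $C^*(E)$ is unital because $E^0$ is finite.

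For the inductive step, I would assume $\sum_{\mu\in W_k} S_\mu S_\mu^* = 1$ and aim to show the same for $k+1$. The key identity is \eqref{eq:itfollows} (equivalently a direct application of \eqref{eq:CK3}): for a walk $\mu$ of length $k$ with $r(\mu) = v$, and using $S_\mu^* S_\mu = P_v$ together with $\sum_{e\in s^{-1}(v)} S_e S_e^* = P_v$ (valid since $v$ is regular, as $E$ has no sinks), one gets
\[
S_\mu S_\mu^* = S_\mu P_v S_\mu^* = \sum_{e\in s^{-1}(r(\mu))} S_\mu S_e S_e^* S_\mu^* = \sum_{\substack{\mu'\in W_{k+1}\\ \mu'=\mu e}} S_{\mu'} S_{\mu'}^*,
\]
where the last sum ranges over all length-$(k+1)$ walks extending $\mu$ by one edge. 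Summing this over all $\mu\in W_k$ and observing that every walk $\mu'\in W_{k+1}$ arises exactly once as such an extension (write $\mu' = \mu e$ with $\mu$ its initial length-$k$ segment and $e$ its last edge), the right-hand side becomes $\sum_{\mu'\in W_{k+1}} S_{\mu'} S_{\mu'}^*$, while the left-hand side is $\sum_{\mu\in W_k} S_\mu S_\mu^* = 1$ by the inductive hypothesis. This completes the induction.

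The only subtlety — and the one point worth being careful about rather than a genuine obstacle — is to make sure that the hypothesis ``no sinks'' is used precisely where needed: it guarantees $s^{-1}(v)\neq\emptyset$ for every $v$, so that $v\in E^0_{\mathrm{reg}}$ (finiteness of $E$ gives the other half, $|s^{-1}(v)|<\infty$), and hence \eqref{eq:CK3} applies at every vertex. Without this, a sink $v$ would contribute a projection $P_v$ to $W_k$ for all $k$ but could not be ``refined'' into length-$(k+1)$ walks, and the telescoping would fail. Everything else is a routine bookkeeping of which walks extend which, so no step should present real difficulty.
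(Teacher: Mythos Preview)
Your proof is correct and follows essentially the same approach as the paper's: both argue by induction on $k$, using \eqref{eq:unitA} for the base case and the Cuntz-Krieger relation \eqref{eq:CK3} (valid at every vertex since $E$ is finite with no sinks) to pass from $W_k$ to $W_{k+1}$. The only cosmetic difference is that the paper starts from $\sum_{\mu'\in W_{k+1}}S_{\mu'}S_{\mu'}^*$ and reduces it to $\sum_{\mu\in W_k}S_\mu S_\mu^*$, whereas you start from the latter and expand to the former; this is the same identity read in opposite directions.
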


\begin{proof}
Notice that since $E$ has no sinks, $W_k$ is non-empty for every $k$.
The lemma is proved by induction on $k\geq 0$. For $k=0$, \eqref{eq:unitB} becomes
\[
\sum_{v\in E^0}P_vP_v=\sum_{v\in E^0}P_v=1 ,
\]
cf.~\eqref{eq:unitA}. Next, assume that \eqref{eq:unitB} holds for some $k\geq 0$. From \eqref{eq:itfollows},
\begin{align*}
\sum_{\mu\in W_{k+1}}S_\mu S_\mu^* &=
\sum_{\nu\in W_k,e\in E^1,r(\nu)=s(e)}S_\nu S_eS_e^* S_{\nu}^* . \\
\intertext{Since $S_fS_e=0$ if $r(f)\neq s(e)$ (see \cite[Prop.~1.12]{R05}, remembering the different convention on source and range map), one has $S_\nu S_e=0$ for all $\nu$ with $r(\nu)\neq s(e)$, and the last summation is equal to}
&=
\sum_{\nu\in W_k}\sum_{e\in E^1}S_\nu S_eS_e^* S_{\nu}^* . \\
\intertext{Since $E$ has no sinks, for every $v\in E^0$ there is at least one edge with source $v$. Thus, the last summation is equal to}
 &=
\sum_{\nu\in W_k}S_\nu \left(\sum_{v\in E^0}\sum_{e\in s^{-1}(v)}S_eS_e^*\right) S_{\nu}^* \\
\intertext{now we use \eqref{eq:CK3}:}
&=\sum_{\nu\in W_k}S_\nu \left(\sum_{v\in E^0}P_v\right) S_{\nu}^*=\sum_{\nu\in W_k}S_\nu S_{\nu}^* .
\end{align*}
By the inductive hypothesis, last sum is $1$, and this proves the inductive step.
\end{proof}

\begin{prop}\label{prop:linebundlesA}
For all $k\geq 0$, one has
\begin{equation}\label{eq:mkleft}
[\mathcal{L}_{-k}]=\sum_{v\in E^0}m_{k,v}[P_v] ,
\end{equation}
where $m_{k,v}$ is given by \eqref{eq:mvk} (and can be zero for some $k$ and $v$).
\end{prop}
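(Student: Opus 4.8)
The plan is to realise $\mathcal{L}_{-k}$ concretely as a finitely generated projective left $\mathcal{L}_0$-module and then read off its class. Let $W_k$ be as in the preceding lemma, the set of directed walks of length $k$; it is finite because $E$ is finite, and nonempty because $E$ has no sinks. First I would introduce the two left $\mathcal{L}_0$-linear maps
\[
\Phi\colon\bigoplus_{\mu\in W_k}\mathcal{L}_0\longrightarrow\mathcal{L}_{-k},\qquad (a_\mu)_{\mu}\longmapsto\sum_{\mu\in W_k}a_\mu S_\mu^* ,
\]
\[
\Psi\colon\mathcal{L}_{-k}\longrightarrow\bigoplus_{\mu\in W_k}\mathcal{L}_0,\qquad x\longmapsto (xS_\mu)_{\mu} ,
\]
which are well defined and bounded because $a_\mu S_\mu^*$ has gauge degree $-k$ while $xS_\mu$ has gauge degree $0$ for $x\in\mathcal{L}_{-k}$ and $\mu\in W_k$.

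The key point is that $\Phi\circ\Psi=\id_{\mathcal{L}_{-k}}$: for $x\in\mathcal{L}_{-k}$,
\[
\Phi(\Psi(x))=\sum_{\mu\in W_k}xS_\mu S_\mu^*=x\Bigl(\sum_{\mu\in W_k}S_\mu S_\mu^*\Bigr)=x ,
\]
by \eqref{eq:unitB}. Hence $\mathcal{L}_{-k}$ is a retract of the finitely generated free left module $\bigoplus_{\mu\in W_k}\mathcal{L}_0$; in particular it is finitely generated projective, and it is isomorphic to the image of the idempotent $e:=\Psi\circ\Phi$ (idempotent precisely because $\Phi\circ\Psi=\id$). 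Next I would evaluate $e$ using the relation $S_\mu^*S_\nu=\delta_{\mu\nu}P_{r(\mu)}$ for $\mu,\nu\in W_k$, which is the special case of \eqref{eq:relations} in which the two walks have equal length: this gives $e\bigl((a_\mu)_\mu\bigr)=(a_\mu P_{r(\mu)})_\mu$, whose image is $\bigoplus_{\mu\in W_k}\mathcal{L}_0 P_{r(\mu)}$.

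Combining these, $\mathcal{L}_{-k}\cong\bigoplus_{\mu\in W_k}\mathcal{L}_0 P_{r(\mu)}$ as left $\mathcal{L}_0$-modules, so that
\[
[\mathcal{L}_{-k}]=\sum_{\mu\in W_k}[P_{r(\mu)}]=\sum_{v\in E^0}\bigl|\{\mu\in W_k:r(\mu)=v\}\bigr|\,[P_v]=\sum_{v\in E^0}m_{k,v}[P_v] ,
\]
the last equality being the definition of $m_{k,v}$. I do not expect a real obstacle here: the only points requiring care are checking that $\Phi$ and $\Psi$ respect the $\Z$-grading and are left $\mathcal{L}_0$-linear, and identifying $\mathrm{im}(e)$, and these follow at once from the Cuntz--Krieger relations, namely \eqref{eq:unitB} and \eqref{eq:relations}. (One could instead argue by hand that the $S_\mu^*$ with $\mu\in W_k$ generate $\mathcal{L}_{-k}$ as a left module --- via the factorisation $S_\alpha S_\beta^*=(S_\alpha S_{\beta''}^*)S_{\beta'}^*$, where $\beta'$ consists of the first $k$ edges of $\beta$ --- and that the sum $\sum_\mu\mathcal{L}_0 S_\mu^*$ is direct because the range projections $S_\mu S_\mu^*$ are pairwise orthogonal, with $\mathcal{L}_0 S_\mu^*\cong\mathcal{L}_0 P_{r(\mu)}$; but the retraction argument is cleaner.)
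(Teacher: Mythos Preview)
Your proof is correct and is essentially identical to the paper's own argument: your $\Phi$ and $\Psi$ are precisely the paper's $\psi_k$ and $\phi_k$, and both proofs identify $\mathcal{L}_{-k}$ with the image of the idempotent $\Psi\circ\Phi$ on $(\mathcal{L}_0)^{W_k}$ via the relation $S_\mu^*S_\nu=\delta_{\mu\nu}P_{r(\mu)}$ and \eqref{eq:unitB}.
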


\begin{proof}
Let $W_k$ be the set of directed walks of lengths $k\geq 0$.
Consider the left $\mathcal{L}_0$-module maps
\begin{align*}
\phi_k &:\mathcal{L}_{-k}    \longrightarrow (\mathcal{L}_0)^{W_k} , & a &\longmapsto (aS_{\mu}) , \\
\psi_k &:(\mathcal{L}_0)^{W_k}  \longrightarrow \mathcal{L}_{-k} , & (b_{\mu}) &\longmapsto
\sum_{\mu\in W_k} b_\mu S_\mu^*.
\end{align*}
It follows from \eqref{eq:unitB} that $\psi_k\phi_k$ is the identity on $\mathcal{L}_{-k}$. Thus, one has the isomorphism of left $\mathcal{L}_0$-modules:
\[
\mathcal{L}_{k}\cong\phi_{k}\psi_{k}(\mathcal{L}_0)^{W_k} .
\]
If $|\mu|=|\rho|$, from \eqref{eq:relations} we deduce that $S_\mu^* S_\rho=\delta_{\mu,\rho}P_{r(\mu)}$.
Hence, for $b=(b_\mu)\in (\mathcal{L}_0)^{W_k}$ one has
\[
\phi_{k}\psi_{k}(b)_\mu=\sum_{\rho\in W_k} b_\rho S_\rho^* S_\mu  =b_{\mu}P_{r(\mu)} .
\]
Thus, we have the isomorphisms of left modules
\[
\mathcal{L}_{-k}\cong \bigoplus_{\mu\in W_k}\mathcal{L}_0P_{r(\mu)}\cong
\bigoplus_{v\in E^0}\;\bigoplus_{\substack{\mu\in W_k\\[1pt] r(\mu)=v}}\mathcal{L}_0P_{v}\cong
 \bigoplus_{v\in E^0}(\mathcal{L}_0P_v)^{\oplus m_{k,v}} ,
\]
where we counted the number of walks in $W_k$ with range $v$.
\end{proof}

\begin{prop}\label{prop:linebundlesB}
Assume also that $E$ has no sources. Then, for all $k\geq 0$,
\begin{equation}\label{eq:mkright}
[\mathcal{L}_k]=\sum_{w\in E^0}[Q_{w,k}] .
\end{equation}
\end{prop}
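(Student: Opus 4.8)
The plan is to compute $[\mathcal{L}_k]$ in $K_0(\mathcal{L}_0)$, $\mathcal{L}_0:=C^*(E)^{U(1)}$, from an explicit decomposition of $\mathcal{L}_k$ as a left $\mathcal{L}_0$-module, in the spirit of the proof of Prop.~\ref{prop:linebundlesA} but cutting by vertex projections on the right rather than realizing $\mathcal{L}_k$ as a summand of a free module. Since the $P_w$, $w\in E^0$, are pairwise orthogonal projections in $\mathcal{L}_0$ with $\sum_{w}P_w=1$, and right multiplication by an element of $\mathcal{L}_0$ commutes with the left $\mathcal{L}_0$-action, the maps $x\mapsto xP_w$ are bounded idempotent left-module endomorphisms of $\mathcal{L}_k$ summing to the identity; hence
\[
\mathcal{L}_k=\bigoplus_{w\in E^0}\mathcal{L}_kP_w
\]
as left $\mathcal{L}_0$-modules, each summand being finitely generated and projective (a direct summand of the self-Morita equivalence bimodule $\mathcal{L}_k$). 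It therefore suffices to prove that $\mathcal{L}_kP_w\cong\mathcal{L}_0Q_{w,k}$ as left $\mathcal{L}_0$-modules for every $w$, since then $[\mathcal{L}_k]=\sum_w[\mathcal{L}_kP_w]=\sum_w[\mathcal{L}_0Q_{w,k}]=\sum_w[Q_{w,k}]$.

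For this identification I would use the chosen walk $\mu_{w,k}$ and the partial isometry $S_{\mu_{w,k}}$ as intertwiner. Because $E$ has no sources, a directed walk of length $k$ with range $w$ exists, so $\mu_{w,k}$ and $Q_{w,k}=S_{\mu_{w,k}}S_{\mu_{w,k}}^*$ really are defined and nonzero. I would consider the two maps
\[
F\colon\mathcal{L}_kP_w\to\mathcal{L}_0Q_{w,k},\quad x\mapsto xS_{\mu_{w,k}}^* ,
\qquad
G\colon\mathcal{L}_0Q_{w,k}\to\mathcal{L}_kP_w,\quad a\mapsto aS_{\mu_{w,k}} ,
\]
and check that they are well defined (the gauge degrees $\mp k$ match, $xS_{\mu_{w,k}}^*=xS_{\mu_{w,k}}^*Q_{w,k}$ using \eqref{eq:CK1} together with $xP_w=x$, and $aS_{\mu_{w,k}}P_w=aS_{\mu_{w,k}}$ since $r(\mu_{w,k})=w$), bounded, and $\mathcal{L}_0$-linear. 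They are mutually inverse because $S_{\mu_{w,k}}^*S_{\mu_{w,k}}=P_w$ acts as the identity on $\mathcal{L}_kP_w$ and $S_{\mu_{w,k}}S_{\mu_{w,k}}^*=Q_{w,k}$ acts as the identity on $\mathcal{L}_0Q_{w,k}$. This is the entire argument, and all the computations involved are of the same elementary kind as \eqref{eq:relations}.

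I do not expect a serious obstacle, but two points deserve care. First, the hypothesis that $E$ has no sources is used exactly once, namely to guarantee $A_{k+1}(w)\neq 0$ for every $w$ and $k$ so that $\mu_{w,k}$ exists; if $w$ were a source then $Q_{w,k}=0$ for $k\geq 1$ while $\mathcal{L}_kP_w$ need not vanish, and the formula would fail. Second, it is worth emphasizing why $\sum_w[Q_{w,k}]$ is not simply $\sum_w[P_w]=[1]$: although $S_{\mu_{w,k}}S_{\mu_{w,k}}^*$ is Murray--von Neumann equivalent to $S_{\mu_{w,k}}^*S_{\mu_{w,k}}=P_w$ inside $C^*(E)$, the implementing partial isometry $S_{\mu_{w,k}}$ has gauge degree $k$ and does not lie in the core, so $[Q_{w,k}]$ and $[P_w]$ are genuinely different classes in $K_0(\mathcal{L}_0)$ in general, as already quantified by Lemma~\ref{lemma:Gamma}.
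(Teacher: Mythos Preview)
Your argument is correct. The decomposition $\mathcal{L}_k=\bigoplus_w\mathcal{L}_kP_w$ is valid for the reason you give, and the maps $F,G$ are well defined mutual inverses of left $\mathcal{L}_0$-modules by exactly the identities $S_{\mu_{w,k}}^*S_{\mu_{w,k}}=P_w$ and $S_{\mu_{w,k}}S_{\mu_{w,k}}^*=Q_{w,k}$ you cite. Your two closing remarks are also on point.

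The paper takes a slightly different route: rather than splitting $\mathcal{L}_k$ on the right by the vertex projections, it builds a single isometry $Z:=\sum_{w}S_{\eta(w)}\in\mathcal{L}_1$ (choosing one edge $\eta(w)$ entering each vertex, which is where ``no sources'' enters), so that $\mathcal{L}_k\cong\mathcal{L}_0\,Z^k(Z^*)^k$ as left modules via $a\mapsto a(Z^*)^k$. It then expands the scalar projection $Z^k(Z^*)^k=\sum_wS_{\rho_{w,k}}S_{\rho_{w,k}}^*$ and observes that each summand is Murray--von Neumann equivalent \emph{inside the core} to $Q_{w,k}$ via the degree-zero partial isometry $V_{w,k}=S_{\rho_{w,k}}S_{\mu_{w,k}}^*$. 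Your approach is more direct and avoids the auxiliary element $Z$ and the subgraph it defines; the paper's approach has the minor bonus of producing an explicit single projection in $\mathcal{L}_0$ representing $[\mathcal{L}_k]$, and the element $Z$ reappears later (Remark~\ref{rem:SMEB}). Either way, the essential mechanism is the same: a degree-$k$ partial isometry with source projection $P_w$ and range projection $Q_{w,k}$ furnishes the left-module isomorphism, while not living in the core itself.
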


\begin{proof}
For $k=0$ the statement is trivial, so we can assume $k>0$.
Since the graph has no sources, for each $w\in E^0$ we can choose an edge $\eta(w)$ with range $w$.
Let
\begin{equation}\label{eq:elementZ}
Z:=\sum_{w\in E^0}S_{\eta(w)} .
\end{equation}
Clearly
\[
Z^*Z=\sum_{u,w\in E^0}S_{\eta(u)}^*S_{\eta(w)}=\sum_{u,w\in E^0}\delta_{u,w}P_w=1 .
\]
So, for every $k>0$, $(Z^*)^kZ^k=1$ and $Z^k(Z^*)^k$ is a projection.

Since $Z^k\in\mathcal{L}_k$ and $(Z^*)^k\in\mathcal{L}_{-k}$, there are left $\mathcal{L}_0$-module maps
\begin{align*}
\phi_k &:\mathcal{L}_k \longrightarrow \mathcal{L}_0 , & a &\longmapsto a(Z^*)^k , \\
\psi_k &:\mathcal{L}_0 \longrightarrow \mathcal{L}_k , & b &\longmapsto bZ^k .
\end{align*}
Since $\psi_k\phi_k$ is the identity, it follows
$\mathcal{L}_k\cong\phi_k\psi_k(\mathcal{L}_0)=\mathcal{L}_0Z^k(Z^*)^k$
as left $\mathcal{L}_0$-modules. Thus,
\[
[\mathcal{L}_k]=[Z^k(Z^*)^k] .
\]
Consider the subgraph $F$ with $F^0:=E^0$ and $F^1:=\big\{\eta(w):w\in E^0\big\}$. Since every vertex in $F$ has at most one incoming edge, for every vertex $w$ and every $k$ there is at most one directed walk in $F$ with length $k$ and range $w$.

Since $S_eS_f^*=0$ if $r(e)\neq r(f)$, one has
\[
ZZ^*=\sum_{w\in E^0}S_{\eta(w)}S_{\eta(w)}^* .
\]
Since $S_eS_f=0$ if $r(e)\neq s(f)$, by induction on $k\geq 1$ one shows that
\[
Z^k(Z^*)^k=\sum_{w\in E^0}S_{\rho_{w,k}}S_{\rho_{w,k}}^*  ,
\]
where $\rho_{w,k}$ is the unique directed walk in $F$ of length $k$ and range $w$. If there is no such a walk, we set $S_{\rho_{w,k}}:=0$.

The summands in the equation above are orthogonal projections, since the paths $\rho_{w,k}$ have distinct ranges. Thus,
\[
[Z^k(Z^*)^k]=\sum_{w\in E^0}[S_{\rho_{w,k}}S_{\rho_{w,k}}^*] .
\]
We can insert $P_w=S_{\mu_{w,k}}^*S_{\mu_{w,k}}$ in the middle and get
\[
[Z^k(Z^*)^k]=\sum_{w\in E^0}[S_{\rho_{w,k}}S_{\mu_{w,k}}^*S_{\mu_{w,k}}S_{\rho_{w,k}}^*]=\sum_{w\in E^0}[V_{w,k}V_{w,k}^*] ,
\]
where we called $V_{w,k}:=S_{\rho_{w,k}}S_{\mu_{w,k}}^*$. Since $V_{w,k}$ belongs to $C^*(E)^{U(1)}$, $V_{w,k}V_{w,k}^*$ is von Neumann equivalent to
\[
V_{w,k}^*V_{w,k}=
S_{\mu_{w,k}}S_{S_{\rho_{w,k}}}^*
S_{S_{\rho_{w,k}}}S_{\mu_{w,k}}^*=S_{\mu_{w,k}}S_{\mu_{w,k}}^*=Q_{w,k} ,
\]
hence the thesis.
\end{proof}

If the adjacency matrix $\Gamma$ is invertible, we know that the K-theory of the AF core is generated by the vertex projections. There is a simple explicit formula for line bundles in terms of vertex projections.
Let us denote by $\Gamma^{(k)}_{v,w}$ the matrix elements of $\Gamma^k$ as before (for $k\geq 0$ if $\Gamma$ is not invertible, and for any $k\in\Z$ if $\Gamma$ is invertible).

\begin{prop}\label{prop:linebundlesC}
Let $E$ be a finite graph with no sinks. For all $k\geq 0$, one has
\begin{equation}\label{eq:mkagain}
[\mathcal{L}_{-k}]=\sum_{v,w\in E^0}\Gamma^{(k)}_{v,w} [P_w] .
\end{equation}
If $E$ has no sources and $\Gamma$ is invertible over the integers, then \eqref{eq:mkagain} holds for every $k\in\Z$.
\end{prop}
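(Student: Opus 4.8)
The plan is to split the statement according to the sign of $k$. For $k\geq 0$ the formula follows at once from Proposition~\ref{prop:linebundlesA}: combining \eqref{eq:mkleft} with the expression \eqref{eq:mvk} for $m_{k,v}$ gives
\[
[\mathcal{L}_{-k}]=\sum_{v\in E^0}m_{k,v}[P_v]=\sum_{v\in E^0}\Bigl(\sum_{u\in E^0}\Gamma^{(k)}_{u,v}\Bigr)[P_v]=\sum_{u,v\in E^0}\Gamma^{(k)}_{u,v}[P_v],
\]
which is \eqref{eq:mkagain} after relabelling the summation indices; this uses only that $E$ has no sinks, so it settles the first assertion. The case $k=0$ is the trivial identity $[\mathcal{L}_0]=\sum_{v\in E^0}[P_v]=1$, in agreement with $\Gamma^{(0)}=\id$.

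For $k<0$ I would bring in the extra hypotheses of the second assertion, namely that $E$ has no sources and $\Gamma\in GL(|E^0|,\Z)$. As recalled in the proof of Proposition~\ref{prop:Gammainverse}, invertibility of $\Gamma$ forces $A_k(v)\neq 0$, hence $Q_{v,k}\neq 0$, for all $v\in E^0$ and all $k\in\N$, so the vector $q_k:=\bigl([Q_{v,k}]\bigr)_{v\in E^0}$ with entries in $K_0(A)$ is well defined for every $k\geq 0$. Lemma~\ref{lemma:Gamma} is then precisely the matrix identity $q_k=\Gamma\,q_{k+1}$, and inverting $\Gamma$ gives $q_k=\Gamma^{-k}q_0$ for all $k\geq 0$, with $q_0=\bigl([P_v]\bigr)_{v\in E^0}$. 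Fixing $m>0$ and applying Proposition~\ref{prop:linebundlesB} (its hypothesis of no sources being in force) yields
\[
[\mathcal{L}_m]=\sum_{v\in E^0}[Q_{v,m}]=\sum_{v\in E^0}\bigl(\Gamma^{-m}q_0\bigr)_v=\sum_{v,w\in E^0}\Gamma^{(-m)}_{v,w}[P_w],
\]
which is \eqref{eq:mkagain} with $k=-m$. Together with the case $k\geq 0$ this exhausts all $k\in\Z$.

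I do not expect a genuine obstacle: the proposition is essentially a repackaging of Propositions~\ref{prop:linebundlesA} and~\ref{prop:linebundlesB} and Lemma~\ref{lemma:Gamma}. The only points that need care are the index (transpose) conventions when reading Lemma~\ref{lemma:Gamma} as the matrix identity $q_k=\Gamma\,q_{k+1}$, and the observation that invertibility of $\Gamma$ makes every $A_k(v)$ nonzero, so that the vectors $q_k$ and the inverse step $q_{k+1}=\Gamma^{-1}q_k$ are meaningful. In fact, when $E$ has no sources and $\Gamma$ is invertible one can obtain the whole range $k\in\Z$ uniformly from Proposition~\ref{prop:linebundlesB} and Lemma~\ref{lemma:Gamma} alone; Proposition~\ref{prop:linebundlesA} is still needed to reach $k\geq 0$ under the weaker assumption that $E$ merely has no sinks.
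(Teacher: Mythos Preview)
Your proof is correct and follows essentially the same route as the paper's: the case $k\geq 0$ is reduced to Proposition~\ref{prop:linebundlesA} and \eqref{eq:mvk}, and for positive degree you invert the recursion of Lemma~\ref{lemma:Gamma} and feed the result into Proposition~\ref{prop:linebundlesB}. Your matrix formulation $q_k=\Gamma\,q_{k+1}$ is just a compact rewriting of what the paper does componentwise, and your explicit remark that invertibility of $\Gamma$ guarantees $A_k(v)\neq 0$ (so that Lemma~\ref{lemma:Gamma} applies to every $v$) is a point the paper leaves implicit.
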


\begin{proof}
The statement for $k\geq 0$ follows from immediately from \eqref{eq:mkleft} and \eqref{eq:mvk}.
If $\Gamma$ is invertible, we can invert the relation \eqref{eq:Gamma} and get
\[
[Q_{v,j+1}]=\sum_{w\in E^0}\Gamma_{v,w}^{(-1)}[Q_{w,j}]
\]
for all $j\geq 0$. By induction
\[
[Q_{v,j+k}]=\sum_{w\in E^0}\Gamma_{v,w}^{(-k)}[Q_{w,j}]
\]
for all $j\geq 0$ and $k>0$. If $E$ has no sources, using this with $j=0$ in \eqref{eq:mkright} we get \eqref{eq:mkagain} for the line bundles of positive degree.
\end{proof}

Let $n:=|E^0|$ and number the vertices from $1$ to $n$. So, $E^0=\{1,\ldots,n\}$.
If $\Gamma$ is invertible, we can define $m_{k,j}$ as in \eqref{eq:mvk} for any $j\in E^0$ and $k\in\Z$, and 
rewrite \eqref{eq:mkagain} as
\begin{equation}\label{eqLkpj}
[\mathcal{L}_k]=\sum_{j=1}^n  m_{-k,j} [P_j]  .
\end{equation}
Under the assumptions of Prop.~\ref{prop:Gammainverse}, if the infinite matrix $(m_{-k,j})$ has an $n\times n$ submatrix that is invertible over the integers, then the line bundles corresponding to the rows of such a submatrix generate $K_0\big(C^*(E)^{U(1)}\big)$.

\smallskip

We now give two examples. The first one shows that the invertibility of $\Gamma$ in Prop.~\ref{prop:Gammainverse} is a sufficient but not a necessary condition for the vertex projections to generate the K-theory. The second one is an example where $\Gamma$ is invertible but the matrix $(m_{-k,j})$ is not full rank. In both examples, the line bundles \eqref{eq:Lk} are all trivial and no family of these line bundles can generate the K-theory.

\begin{ex}
Let $E$ be the graph in Example \ref{ex:M2C}.
The adjacency matrix
\[
\Gamma=\mat{0 & 1 \\ 0 & 1}
\]
is not invertible. From \eqref{eq:Gamma}, $[P_1]=[P_2]=[Q_{2,k}]$ and $Q_{1,k}=0$ for all $k\geq 1$.

The singleton $\{[P_1]\}$ is a basis of $K_0(M_2(\C))\cong\Z$ (cf.~Prop.~\ref{prop:generators}).
Since a matrix algebra has only the trivial self-Morita equivalence bimodule (see Appendix \ref{sec:app} for the case of a general finite-dimensional C*-algebra), then $\mathcal{L}_k$ is isomorphic to $\mathcal{L}_0$ as a bimodule and
\[
[\mathcal{L}_k]=[1]=[P_1]+[P_2]=2[P_2] , \qquad\quad\forall\;k\in\Z .
\]
In particular, line bundles generate only the K-theory classes that are divisible by $2$.
\end{ex}

\begin{ex}
Let $E$ be the graph in Example \ref{ex:againM2C}. The adjacency matrix
\[
\Gamma=\mat{0 & 1 \\ 1 & 0}
\]
is invertible, but the matrix $(m_{-k,j})$ has all rows equal to $(1,1)$, so it's not full rank.

From \eqref{eq:mkagain} we see that $[\mathcal{L}_k]=[P_1]+[P_2]=[1]$ for all $k\in\Z$,
so that it is not possible to generate $K_0(\C^2)\cong\Z^2$ only with the line bundles \eqref{eq:Lk}.
Since $\C^2$ has a non-trivial invertible bimodule (cf.~Appendix \ref{sec:app}), this also shows that not all invertible bimodules are of the form \eqref{eq:Lk}.
\end{ex}

\section{Applications}\label{sec:appli}

Let $E^0=\{1,\ldots,n\}$ like in the previous section. Let $\Gamma$ be the adjacency matrix of $E$ and let $c_0,c_1,\ldots,c_n\in\Z$ be given by
\begin{equation}\label{eq:monic}
\sum_{k=0}^nc_kt^k:=\det (t\Gamma-1) .
\end{equation}
Note that $c_n=\det(\Gamma)$ and $c_0=(-1)^n$.

\begin{prop}[Atiyah-Todd identities]~\label{cor:AT}
Let $E$ be a finite graph with invertible adjacency matrix $\Gamma$, and let $c_k$ be the coefficients in \eqref{eq:monic}. Then, the following identities hold in $K_0\big(C^*(E)^{U(1)}\big)$:
\begin{subequations}\label{eq:ATq}
\begin{alignat}{2}
[\mathcal{L}_k] &=-c_n\sum_{j=k-n}^{k-1}c_{j-(k-n)}[\mathcal{L}_j] , \qquad\quad && \forall\;k\geq n, \label{eq:ATq1} \\
[\mathcal{L}_{-k}] &=(-1)^{n+1}\sum_{j=-k+1}^{-k+n}c_{j+k}[\mathcal{L}_j] ,
&&\forall\;k\geq 1 . \label{eq:ATq2}
\end{alignat}
\end{subequations}
\end{prop}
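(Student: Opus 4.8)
The plan is to reduce both identities to the Cayley--Hamilton theorem for the adjacency matrix $\Gamma$, combined with the description of $[\mathcal{L}_k]$ in terms of vertex projections from Proposition~\ref{prop:linebundlesC}.

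First I would record the linear-algebra input. Since $\Gamma$ is invertible over $\Z$ (in particular $E$ has no sinks and no sources, so all the results of Section~\ref{sec:lineAF} apply), we have $c_n=\det\Gamma\in\{\pm 1\}$, hence $c_n^{-1}=c_n$; this is exactly what makes \eqref{eq:ATq1} an integral identity. Expanding
\[
\det (t\Gamma-1)=(-1)^nt^n\det\bigl(t^{-1}\cdot 1-\Gamma\bigr)=(-1)^nt^n\chi_\Gamma(t^{-1}),
\]
where $\chi_\Gamma(\lambda):=\det(\lambda\cdot 1-\Gamma)=\sum_{j=0}^np_j\lambda^j$ is the characteristic polynomial of $\Gamma$, gives the relabelling $c_k=(-1)^np_{n-k}$. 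Feeding this into Cayley--Hamilton $\chi_\Gamma(\Gamma)=0$ and multiplying by $(-1)^n\Gamma^{-n}$ produces the operator identity $\sum_{k=0}^nc_k\Gamma^{-k}=0$, and then, multiplying by an arbitrary power $\Gamma^{m}$,
\[
\sum_{k=0}^nc_k\,\Gamma^{m-k}=0\qquad\text{for all }m\in\Z .
\]

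Next I would pass to K-theory. By \eqref{eqLkpj} (valid for every $k\in\Z$ since $\Gamma$ is invertible) one has $[\mathcal{L}_k]=\sum_{w\in E^0}m_{-k,w}[P_w]$, and by \eqref{eq:mvk} the integer row vector $(m_{k,w})_{w}$ equals $\mathbf{1}^{\mathsf{T}}\Gamma^{k}$, with $\mathbf{1}=(1,\dots,1)$. Setting $[P]:=([P_1],\dots,[P_n])^{\mathsf{T}}$ and $u_m:=[\mathcal{L}_{-m}]=\mathbf{1}^{\mathsf{T}}\Gamma^{m}[P]$, applying $\mathbf{1}^{\mathsf{T}}(-)[P]$ to the operator identity above yields the integral recurrence
\[
\sum_{k=0}^nc_k\,u_{m-k}=0\qquad\text{for all }m\in\Z .
\]
Solving it in the two directions then finishes the proof: isolating the $k=0$ term, whose coefficient is $c_0=(-1)^n$, gives $u_m=(-1)^{n+1}\sum_{\ell=1}^nc_\ell u_{m-\ell}$, and the substitutions $m\mapsto k$, $\ell\mapsto j+k$, together with $u_{k-\ell}=[\mathcal{L}_{\ell-k}]$, turn this into \eqref{eq:ATq2}; isolating the $k=n$ term, whose coefficient $c_n=\pm1$ is a unit in $\Z$, gives $u_{m-n}=-c_n\sum_{\ell=0}^{n-1}c_\ell u_{m-\ell}$, and the substitutions $m=n-k$, $\ell\mapsto j-(k-n)$, together with $u_{n-k-\ell}=[\mathcal{L}_{k+\ell-n}]$, turn this into \eqref{eq:ATq1}.

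I do not expect a genuine obstacle: the whole statement is Cayley--Hamilton plus the bookkeeping already done in Propositions~\ref{prop:linebundlesC} and \ref{prop:Gammainverse}. The two points that need care are (i) getting the relabelling $c_k=(-1)^np_{n-k}$ right, so that Cayley--Hamilton reappears as $\sum_kc_k\Gamma^{-k}=0$ and not with the indices reversed, and (ii) invoking $c_n=\det\Gamma=\pm1$ at the step where one divides by $c_n$ — this is precisely why \eqref{eq:ATq1} has integer coefficients. The two reindexings identifying $u_m$ with the appropriate $[\mathcal{L}_k]$ are routine but I would write them out explicitly.
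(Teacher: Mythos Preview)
Your proof is correct and follows essentially the same route as the paper: derive $\sum_{k=0}^n c_k\Gamma^{-k}=0$ from Cayley--Hamilton, shift by an arbitrary power of $\Gamma$, push this identity through \eqref{eq:mkagain}/\eqref{eqLkpj} to obtain $\sum_{k=0}^n c_k[\mathcal{L}_{k+a}]=0$ for all $a\in\Z$, and then peel off the $k=n$ and $k=0$ terms. The only cosmetic difference is that the paper observes directly that $\det(t\Gamma-1)=\det(\Gamma)\det(t\cdot 1-\Gamma^{-1})$ is (up to the unit $c_n$) the characteristic polynomial of $\Gamma^{-1}$, so Cayley--Hamilton for $\Gamma^{-1}$ gives the key identity in one step, whereas you go via $\chi_\Gamma$ and the relabelling $c_k=(-1)^np_{n-k}$; the two are equivalent and your bookkeeping with $u_m=[\mathcal{L}_{-m}]$ and the substitutions checks out.
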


\begin{proof}
Note that if $\Gamma$ is invertible, $E$ has no sinks and no sources
and $c_n=\det(\Gamma)=\pm 1$.
From
\[
\det (t\Gamma-1)=\det(\Gamma)\det (t 1-\Gamma^{-1})
\]
we see that, up to a sign, \eqref{eq:monic} is the characteristic polynomial of $\Gamma^{-1}$. It follows from Cayley-Hamilton theorem that
\[
\sum_{k=0}^nc_k\Gamma^{-k}=0
\]
and multiplying by $\Gamma^{-a}$ we find
\[
\sum_{k=0}^nc_k\Gamma^{-k-a}=0, \qquad\quad\forall\;a\in\Z.
\]
In terms of matrix elements,
\[
\sum_{k=0}^nc_k \Gamma^{(-k-a)}_{i,j}=0 \;,\qquad\quad\forall\; i,j\in\{1,\ldots,n\}.
\]
Using \eqref{eq:mkagain} we find
\[
\sum_{k=0}^nc_k[\mathcal{L}_{k+a}]=\sum_{i,j=1}^n
\Big(\sum_{k=0}^nc_k\Gamma^{(-k-a)}_{i,j}\Big) [P_j] =0.
\]
If in the above sum we single out the term with $k=n$, after a reparametrization we find \eqref{eq:ATq1}. If we single out the term with $k=0$, after a reparametrization we find \eqref{eq:ATq2}.
\end{proof}

\begin{prop}\label{prop:ring}
Let $E$ be a finite graph, 
let $\Gamma$ be its adjacency matrix,
$n:=|E^0|$, and $m_{k,j}$ the integers in \eqref{eq:mvk}.
Assume that both $\Gamma$ and the matrix
\begin{equation}\label{eq:matrixM}
\mathcal{M}:=\begin{pmatrix}
m_{0,1} & m_{0,2} & \cdots & m_{0,n} \\
m_{-1,1} & m_{-1,2} & \cdots & m_{-1,n} \\
\vdots & \vdots && \vdots \\
m_{-n+1,1} & m_{-n+1,2} & \cdots & m_{-n+1,n}
\end{pmatrix}
\end{equation}
are invertible. Then:\smallskip
\begin{enumerate}\itemsep=5pt
\item\label{en:ringA}
The set $\bigl\{[\mathcal{L}_k]:0\leq k<n\bigr\}$ is a basis of $K_0\big(C^*(E)^{U(1)}\big)$.

\item\label{en:ringB}
There exists an isomorphism
\[
\varphi:K_0\big(C^*(E)^{U(1)}\big)\to\Z[\lambda]/\bigl(\det (\lambda\Gamma-1) \bigr)
\]
of abelian groups given on classes of line bundles by
\begin{equation}\label{eq:Lklambdak}
\varphi([\mathcal{L}_k])= \lambda^k  \quad\forall\;k\in\Z .
\end{equation}
(We shall use the same symbol for an element in $\Z[\lambda]$ and its class in the quotient ring.)

\item\label{en:ringC}
The map
\[
\psi:\mathsf{SPic}\big(C^*(E)^{U(1)}\big)\to\Z[\lambda]/\bigl(\det (\lambda\Gamma-1) \bigr)
\]
obtained as a composition of $\varphi$ with \eqref{eq:semigroups} is a homomorphism of semirings.
\end{enumerate}
\end{prop}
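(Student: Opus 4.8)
The plan rests on Prop.~\ref{prop:Gammainverse}, which (since $\Gamma\in GL(n,\Z)$) identifies $K_0(A)\cong\Z^n$ with $\{[P_v]\}_{v\in E^0}$ as a $\Z$-basis, where $A:=C^*(E)^{U(1)}$; on formula \eqref{eqLkpj}, $[\mathcal L_k]=\sum_{j=1}^n m_{-k,j}[P_j]$, which holds for every $k\in\Z$ because invertibility of $\Gamma$ forces $E$ to have no sources; and on the Atiyah--Todd relations of Prop.~\ref{cor:AT}. For \ref{en:ringA}, I would simply note that by \eqref{eqLkpj} the coordinate vector of $[\mathcal L_{i-1}]$ in the basis $\{[P_j]\}$ is the $i$-th row of $\mathcal M$; hence $[\mathcal L_0],\dots,[\mathcal L_{n-1}]$ have $\mathcal M$ as their coordinate matrix, and since $\mathcal M$ is invertible over $\Z$ they form a $\Z$-basis of $K_0(A)$.

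For \ref{en:ringB}, write $p(\lambda):=\det(\lambda\Gamma-1)=\sum_{k=0}^n c_k\lambda^k$. Because $\Gamma\in GL(n,\Z)$, both $c_0=(-1)^n$ and $c_n=\det\Gamma$ equal $\pm1$, so $R:=\Z[\lambda]/(p)$ is $\Z$-free with basis $\{1,\lambda,\dots,\lambda^{n-1}\}$ and $\lambda$ is a unit in $R$ (one solves $p(\lambda)=0$ for $\lambda^{-1}$, using $c_0^{-1}=c_0$). I would then \emph{define} $\varphi\colon K_0(A)\to R$ to be the unique $\Z$-linear map with $\varphi([\mathcal L_k])=\lambda^k$ for $0\le k<n$; by \ref{en:ringA} this is an isomorphism of abelian groups. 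What remains is to check \eqref{eq:Lklambdak} for \emph{all} $k\in\Z$. The proof of Prop.~\ref{cor:AT} shows $\sum_{k=0}^n c_k[\mathcal L_{k+a}]=0$ in $K_0(A)$ for every $a\in\Z$; applying $\varphi$, the sequence $x_k:=\varphi([\mathcal L_k])$ in $R$ satisfies the linear recurrence $\sum_{i=0}^n c_i x_{k+i}=0$. The sequence $(\lambda^k)_{k\in\Z}$ satisfies the same recurrence since $\sum_i c_i\lambda^{k+i}=\lambda^k p(\lambda)=0$ in $R$. The two sequences agree for $k=0,\dots,n-1$, and because $c_0$ and $c_n$ are units the recurrence propagates in both directions and is determined by any $n$ consecutive terms; hence $x_k=\lambda^k$ for all $k\in\Z$.

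For \ref{en:ringC}, the map $q\colon\mathsf{SPic}(A)\to K_0(A)$ from \eqref{eq:semigroups} is additive, hence so is $\psi=\varphi\circ q$, and $\psi([\mathcal L_k])=\varphi([\mathcal L_k])=\lambda^k$ for all $k$ by \ref{en:ringB}. Since $\mathsf{SPic}(A)$ is generated as a semiring by the $[\mathcal L_k]$ and $[\mathcal L_j]\otimes_A[\mathcal L_k]=[\mathcal L_{j+k}]$ by \eqref{eq:Picsemiring}, every element of $\mathsf{SPic}(A)$ is a finite $\N$-linear combination of the $[\mathcal L_k]$, and products of such combinations reduce by distributivity to the same form; so it suffices to check $\psi([\mathcal L_j]\otimes_A[\mathcal L_k])=\psi([\mathcal L_{j+k}])=\lambda^{j+k}=\psi([\mathcal L_j])\psi([\mathcal L_k])$ and extend bilinearly, and to note that $\psi$ sends the multiplicative unit $[\mathcal L_0]$ to $1$ and the zero module to $0$. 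This makes $\psi$ a homomorphism of semirings. I expect the only real difficulty to lie in the last step of \ref{en:ringB}: recognizing the Atiyah--Todd identities as precisely the assertion that $([\mathcal L_k])_{k\in\Z}$ obeys the two-sided linear recurrence with characteristic polynomial $\det(\lambda\Gamma-1)$ --- the same one obeyed by $(\lambda^k)_{k\in\Z}$ in $R$ --- and using that this polynomial has invertible leading \emph{and} trailing coefficients, which is exactly what allows one to conclude from agreement on $n$ consecutive indices. Everything else is routine bookkeeping with change of basis and with the semiring operations.
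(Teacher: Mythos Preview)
Your proposal is correct and follows essentially the same approach as the paper's proof. The paper also derives \ref{en:ringA} from the change-of-basis matrix $\mathcal{M}$, defines $\varphi$ on the basis $\{[\mathcal L_0],\dots,[\mathcal L_{n-1}]\}$, and extends \eqref{eq:Lklambdak} to all $k\in\Z$ via the Atiyah--Todd identities; your phrasing in terms of a two-sided linear recurrence with invertible extreme coefficients is exactly the paper's induction written more compactly, and your argument for \ref{en:ringC} is equivalent to the paper's observation that $\mathsf{SPic}(A)$ is a quotient of $\N[\lambda,\lambda^{-1}]$.
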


\begin{proof}
Point \ref{en:ringA} follows from the discussion after Prop.~\ref{prop:linebundlesC}.

\smallskip

Call $J$ the ideal generated by $\det (\lambda\Gamma-1)$.
Since $\Gamma$ is invertible over the integers, it is a monic polynomial (up to a sign). Thus $\Z[\lambda]/J$ is a free $\Z$-module of rank $n$ and is spanned by powers $\lambda^k$, $k=0,\ldots,n-1$.
Since the set at point \ref{en:ringA} is a basis of the K-group, there is an isomorphism
$\varphi:K_0\big(C^*(E)^{U(1)}\big)\to\Z[\lambda]/\bigl(\det (\lambda\Gamma-1) \bigr)$ defined in terms of these bases by
\begin{equation}\label{eq:itsdef}
\varphi([\mathcal{L}_k]):= \lambda^k  \quad\forall\;k\in\{0,\ldots,n-1\}.
\end{equation}
We now show that the isomorphism thus defined satisfies \eqref{eq:Lklambdak} using the Atiyah-Todd identities.

First we notice that the powers $\lambda^k$ satisfy the same identities as $[\mathcal{L}_k]$, coming from the fact that $\det(\lambda\Gamma-1)=0$ in the quotient ring:
\begin{subequations}
\begin{alignat}{2}
\lambda^k &=-c_n\sum_{j=k-n}^{k-1}c_{j-(k-n)}\lambda^j , \qquad\quad && \forall\;k\geq n, \label{eq:ATql1} \\
\lambda^{-k} &=(-1)^{n+1}\sum_{j=-k+1}^{-k+n}c_{j+k}\lambda^j ,
&&\forall\;k\geq 1 . \label{eq:ATql2}
\end{alignat}
\end{subequations}
Now we can prove \eqref{eq:Lklambdak} by induction on $k$ both for $k\geq n-1$ and $k\leq 0$.

For $j\in\{0,\ldots,n-1\}$ the identity holds by definition of $\varphi$. Let $k\geq n$ and suppose that \eqref{eq:Lklambdak} holds for all $j\in\{0,\ldots,k-1\}$. Then,
applying $\varphi$ to both sides of \eqref{eq:ATq1}, using the inductive hypothesis and 
\eqref{eq:ATql1} we find
\[
\varphi([\mathcal{L}_k]) \stackrel{\eqref{eq:ATq1}}{=}
-c_n\sum_{j=k-n}^{k-1}c_{j-(k-n)}\varphi([\mathcal{L}_j] )
=-c_n\sum_{j=k-n}^{k-1}c_{j-(k-n)}\lambda^j
\stackrel{\eqref{eq:ATql1}}{=}
\lambda^k .
\]
So, the equality \eqref{eq:Lklambdak} holds for $j=k$ as well, thus proving the inductive step for positive $k$'s.

Next, let $k<0$ and suppose that \eqref{eq:Lklambdak} holds for all $j\in\{k+1,k+2,\ldots,k+n\}$. Then,
applying $\varphi$ to both sides of \eqref{eq:ATq2}, using the inductive hypothesis and 
\eqref{eq:ATql2} we find
\[
\varphi([\mathcal{L}_k]) \stackrel{\eqref{eq:ATq2}}{=}
(-1)^{n+1}\sum_{j=k+1}^{k+n}c_{j-k}[\mathcal{L}_j]
=(-1)^{n+1}\sum_{j=k+1}^{k+n}c_{j-k}\lambda^j
\stackrel{\eqref{eq:ATql2}}{=}
\lambda^k .
\]
So, the equality \eqref{eq:Lklambdak} holds for $j=k$ as well, thus proving the inductive step for negative $k$'s.
This concludes the proof of \ref{en:ringB}.

\smallskip

By construction, $\mathsf{SPic}(A)$ is a quotient of $\mathbb{N}[\lambda,\lambda^{-1}]$. The quotient map $\pi:\mathbb{N}[\lambda,\lambda^{-1}]\to \mathsf{SPic}(A)$ sends
$\lambda^k$ to the bimodule class of $\mathcal{L}_k$ for all $k\in\Z$.
But it follows from \eqref{eq:Lklambdak} that
\[
\psi\circ\pi(\lambda^k)=\lambda^k \qquad\quad\forall\;k\in\Z
\]
(where the one on the right hand side is the class of $\lambda^k$ in the quotient ring).
The composition $\psi\circ\pi:\mathbb{N}[\lambda,\lambda^{-1}]\to\Z[\lambda]/J$ is clearly a homomorphism of semirings. Since $\pi$ is a quotient map (of semirings), we deduce that $\psi$ is a homomorphism of semirings as well.
\end{proof}

We will see, now, two applications of Prop.~\ref{prop:ring}:
to the C*-algebra of Penrose tilings $C^*(\mathcal{P})^{U(1)}$, where
$\mathcal{P}$ is the graph of Example \ref{ex:penrose}, and to quantum complex projective spaces
$C(\C P^{n-1}_q)=C^*(\Sigma_{2n-1})^{U(1)}$, where $\Sigma_{2n-1}$ is the graph in Example \ref{ex:qsphere}.
Finally, using the results in Sect.~\ref{sec:lineAF} we will see that a proposition close to Prop.~\ref{prop:ring} can be proved also for the $UHF(n^\infty)$ algebra $C^*(B_n)^{U(1)}$, where $B_n$ be the graph of the Cuntz algebra in Example~\ref{ex:Cuntz} (even if the adjacency matrix is not invertible in this case).

\subsection{The C*-algebra of Penrose tilings}\label{sec:penrose}
Let $\mathcal{P}$ be the graph of Example \ref{ex:penrose}.
Let us label the vertices $\{1,2\}$ instead of $\{0,1\}$, to be consistent.

\begin{prop}
$\mathcal{P}$ satisfies the hypothesis of Prop.~\ref{prop:ring}.
\end{prop}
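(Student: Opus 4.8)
The hypotheses of Prop.~\ref{prop:ring} that must be checked for $\mathcal{P}$ are that the adjacency matrix $\Gamma$ and the matrix $\mathcal{M}$ of \eqref{eq:matrixM} are both invertible over $\Z$, where here $n=|\mathcal{P}^0|=2$. The plan is therefore purely computational. First I would write down $\Gamma$ explicitly by going through the edges of $\mathcal{P}$ one at a time: with the vertices relabelled $1,2$ (formerly $0,1$), the graph has a loop at the vertex $1$, one edge $1\to 2$ and one edge $2\to 1$, so, since $\Gamma_{v,w}$ counts the edges from $v$ to $w$,
\[
\Gamma=\mat{1 & 1 \\ 1 & 0},\qquad \det\Gamma=-1 .
\]
Hence $\Gamma\in GL(2,\Z)$, which is the first required condition (and, as observed in the proof of Prop.~\ref{cor:AT}, this already forces $\mathcal{P}$ to have no sinks and no sources).

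Next I would compute the entries of $\mathcal{M}$. Since $n=2$, it is the $2\times 2$ matrix with rows $(m_{0,1},m_{0,2})$ and $(m_{-1,1},m_{-1,2})$, and by \eqref{eq:mvk} one has $m_{k,w}=\sum_{v\in\mathcal{P}^0}\Gamma^{(k)}_{v,w}$, which is meaningful for $k=-1$ precisely because $\Gamma$ is invertible. From $\Gamma^0=1$ we get $m_{0,1}=m_{0,2}=1$, and from
\[
\Gamma^{-1}=\mat{0 & 1 \\ 1 & -1}
\]
we read off $m_{-1,1}=0+1=1$ and $m_{-1,2}=1+(-1)=0$. Therefore
\[
\mathcal{M}=\mat{1 & 1 \\ 1 & 0},\qquad \det\mathcal{M}=-1 ,
\]
so $\mathcal{M}$ is invertible over $\Z$ as well, and both hypotheses of Prop.~\ref{prop:ring} hold.

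There is no genuine obstacle in this argument beyond bookkeeping; the only points that deserve a second look are the vertex relabelling and the source/range convention (the paper follows \cite{BPRS00,FLR00}), which is why I would derive $\Gamma$ edge by edge rather than quote it. As a consistency check one can note that \eqref{eqLkpj} then yields $[\mathcal{L}_1]=m_{-1,1}[P_1]+m_{-1,2}[P_2]=[P_1]$, which also follows independently from Prop.~\ref{prop:linebundlesB} since $\mathcal{P}$ has no sources. With the proposition verified, all three conclusions of Prop.~\ref{prop:ring} apply to $C^*(\mathcal{P})^{U(1)}$, the AF algebra of the noncommutative space of Penrose tilings.
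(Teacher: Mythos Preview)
Your proof is correct and follows essentially the same approach as the paper: both compute $\Gamma$, its inverse, and then $\mathcal{M}$ explicitly, observing that each has determinant $-1$ and is therefore invertible over $\Z$. Your write-up is slightly more detailed (deriving $\Gamma$ edge by edge and adding the consistency check $[\mathcal{L}_1]=[P_1]$), but the argument is the same direct computation.
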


\begin{proof}
We compute the adjacency matrix, its inverse, and the matrix \eqref{eq:matrixM}:
\[
\Gamma=\mat{1 & 1 \\ 1 & 0}
\quad\Longrightarrow\quad
\Gamma^{-1}=\mat{0 & 1 \\ 1 & \!\!-1}
\quad\Longrightarrow\quad
\mat{ m_{0,1} & m_{0,2} \\ m_{-1,1} & m_{-1,2} }=\mat{ 1 & 1 \\ 1 & 0  } .
\]
Invertibility of $\Gamma$ and $\mathcal{M}$ tell us that the hypothesis of Prop.~\ref{prop:ring} are satisfied.
\end{proof}

From the above formulas for $m_{-1,i}$ and \eqref{eqLkpj} we see that
\[
[\mathcal{L}_1]=[P_1] .
\]
The ring \eqref{eq:multmor} is the ring of polynomials in one variable evaluated at the \emph{golden ratio}:
\[
\Z[\lambda]/(\lambda^2+\lambda-1)\cong\Z[\tfrac{1+\sqrt{5}}{2}]
\]
(the isomorphism is given by $\lambda^{-1}\mapsto\frac{1+\sqrt{5}}{2}$).

We can explicitly write the classes of line bundles in the basis $\big\{[\mathcal{L}_0],[\mathcal{L}_1]\big\}$. For all $k\geq 0$:
\[
\Gamma^k=\mat{F_{k+1} & F_k \\ F_k & F_{k-1}} \qquad\Longrightarrow\qquad
\Gamma^{-k}=(-1)^k \mat{F_{k-1} & -F_k \\ -F_k & F_{k+1}} ,
\]
where $F_j$ is the $j$-th \emph{Fibonacci number} (the formula on the left is well known, and the inverse is computed using $\det(\Gamma^k)=(\det \Gamma)^k=(-1)^k$). Using the recursive formula $F_j+F_{j-1}=F_{j+1}$ and \eqref{eqLkpj} we find the explicit expression of the Atiyah-Todd identities in the present example, which is:
\begin{subequations}\label{eq:LFibo}
\begin{align}
[\mathcal{L}_k] &=(-1)^k (F_{k-1}[\mathcal{L}_0]-F_k[\mathcal{L}_1]) ,\\
[\mathcal{L}_{-k}] &=F_{k+1}[\mathcal{L}_0]+F_k[\mathcal{L}_1] ,
\end{align}
\end{subequations}
for all $k\geq 1$.

\subsection{Quantum projective spaces}\label{sec:6}
In this section, $\Sigma_{2n-1}$ is the graph in Example \ref{ex:qsphere}.
The K-theory of $C^*(\Sigma_{2n-1})^{U(1)}=C(\C P^{n-1}_q)$ can be computed in many different ways.
It was originally computed in \cite{HS02} using the fact that $C(\C P^{n-1}_q)$ is itself a graph C*-algebra.
Another possibility is to use its realization as a groupoid C*-algebra \cite{She19}, or one can use
the six-term exact sequence associated to the extension
\[
0\to \mathcal{K}\to C(\C P^{n-1}_q)\to C(\C P^{n-2}_q)\to 0 ,
\]
that can be proved directly by studying the irreducible representations of the algebra, once this is defined by generators and relations. Finally, one can use the Mayer-Vietoris exact sequence associated to the CW structure of $\C P^{n-1}_q$ \cite{ADHT22}. Either way, one finds that $K_0\big(C(\C P^{n-1}_q)\big)$ is the free abelian group $\Z^n$. A basis in terms of matrix units or of line bundles was derived in \cite{She19}.
The identity \eqref{eq:AT1} for quantum projective spaces where originally proved in \cite{ABL14} using an index argument, and then again in \cite{DHMSZ20} using groupoid C*-algebras.

We take the opportunity here to clarify a detail that was overlooked in \cite{ADHT22}.
In \cite[Sect.~4.1.3]{ADHT22}, it is claimed that $K_0\big(C(\C P^{n-1}_q)\big)$ is generated by $\{[P_1],\ldots,[P_n]\}$ and that this simply follows from \cite[Prop.~3.8 (1)]{CET12}, which says that for a graph $E$, an explicit set of generators for $K_0(C^*(E))$ is given by the classes of the vertex projections in $E$. However, here $P_1,\ldots,P_n$ here are the vertex projections of the graph $\Sigma_{2n-1}$ of $\mathbb{S}^{2n-1}_q$, not of the graph of $\C P^{n-1}_q$. To complete the proof, one should argue that the embedding $C(\C P^{n-1}_q)\to C(\mathbb{S}^{2n-1}_q)$, in terms of Cuntz-Krieger presentations of $\C P^{n-1}_q$ and $\mathbb{S}^{2n-1}_q$, sends the vertex projections of the graph of $\C P^{n-1}_q$ to the vertex projections of the graph of $\mathbb{S}^{2n-1}_q$.

All these results are re-derived here as a corollary of the main theory of Sect.~\ref{sec:lineAF}. In particular, the fact that $K_0\big(C(\C P^{n-1}_q)\big)$ is generated by the vertex projections of $\Sigma_{2n-1}$ follows from the invertibility of the adjacency matrix, cf.~Prop.~\ref{prop:Gammainverse}.

\begin{prop}
$\Sigma_{2n-1}$ satisfies the hypothesis of Prop.~\ref{prop:ring}.
\end{prop}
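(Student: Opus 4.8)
The plan is to verify directly the two invertibility hypotheses of Prop.~\ref{prop:ring} for the adjacency matrix $\Gamma$ of $\Sigma_{2n-1}$ and for the matrix $\mathcal{M}$ of \eqref{eq:matrixM}. Since $\Sigma_{2n-1}$ has exactly one edge from $i$ to $j$ whenever $1\le i\le j\le n$ and no other edges, $\Gamma$ is the upper-triangular $n\times n$ matrix with $\Gamma_{i,j}=1$ for $i\le j$ and $\Gamma_{i,j}=0$ otherwise. It is upper triangular with $1$'s on the diagonal, so $\det\Gamma=1$ and $\Gamma\in GL(n,\Z)$, which is the first hypothesis; a one-line computation also shows that $\Gamma^{-1}$ is the integer bidiagonal matrix with $1$'s on the diagonal, $-1$'s on the first superdiagonal, and $0$ elsewhere.

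For $\mathcal{M}$ I would use that its row of index $\ell\in\{0,1,\dots,n-1\}$ has $j$-th entry $m_{-\ell,j}=\sum_{v\in E^0}\Gamma^{(-\ell)}_{v,j}$, the $j$-th column sum of $\Gamma^{-\ell}$; equivalently, that row is the vector $b^{(\ell)}:=u\,\Gamma^{-\ell}$, where $u:=(1,1,\dots,1)$. Then $b^{(0)}=u$ and $b^{(\ell)}=b^{(\ell-1)}\Gamma^{-1}$, and by the explicit form of $\Gamma^{-1}$, right multiplication by $\Gamma^{-1}$ is the finite-difference map $(v_1,\dots,v_n)\mapsto(v_1,v_2-v_1,\dots,v_n-v_{n-1})$. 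A short induction on $\ell\ge 1$ then shows $b^{(\ell)}_j=0$ for every $j>\ell$ and $b^{(\ell)}_\ell=(-1)^{\ell-1}$ (in fact $b^{(\ell)}_j=(-1)^{j-1}\binom{\ell-1}{j-1}$). With this in hand, the last column of $\mathcal{M}$ is $\bigl(b^{(0)}_n,b^{(1)}_n,\dots,b^{(n-1)}_n\bigr)^{\mathrm{T}}=(1,0,\dots,0)^{\mathrm{T}}$, so expanding $\det\mathcal{M}$ along that column leaves $\pm$ the determinant of the submatrix on rows $\ell=1,\dots,n-1$ and columns $j=1,\dots,n-1$; that submatrix is lower triangular by the vanishing just noted, with diagonal entries $b^{(\ell)}_\ell=(-1)^{\ell-1}\in\{\pm1\}$. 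Hence $\det\mathcal{M}=\pm1$, so $\mathcal{M}\in GL(n,\Z)$, giving the second hypothesis.

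The only mildly delicate step I anticipate is the inductive bookkeeping for the recursion $b^{(\ell)}=b^{(\ell-1)}\Gamma^{-1}$---precisely, tracking which entries of $b^{(\ell)}$ are zero and the value of its last nonzero entry---while computing $\Gamma$, inverting it, and performing the final cofactor expansion are routine.
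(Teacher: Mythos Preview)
Your proof is correct. For $\Gamma$ you argue exactly as the paper does. For $\mathcal{M}$ the two proofs rest on the same recursion---your observation that right multiplication by $\Gamma^{-1}$ is the finite-difference map $(v_1,\dots,v_n)\mapsto(v_1,v_2-v_1,\dots,v_n-v_{n-1})$ is equivalent to the paper's recurrence $m_{k+1,j}=\sum_{i\le j}m_{k,i}$---but diverge in how they finish. The paper computes the full closed form $\Gamma^{(-k)}_{i,j}=(-1)^{j-i}\binom{k}{j-i}$, sums to get $m_{-k,j}=(-1)^{j-1}\binom{k-1}{j-1}$, introduces an auxiliary matrix $\mathcal{M}'$ with $\mathcal{M}=\mathcal{M}'\Gamma$, and then exhibits the explicit inverse of $\mathcal{M}'$ via a binomial identity. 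You instead prove only the vanishing pattern $b^{(\ell)}_j=0$ for $j>\ell$ and the diagonal values $b^{(\ell)}_\ell=(-1)^{\ell-1}$, then conclude by a cofactor expansion along the last column followed by a lower-triangular determinant. Your route is more economical for the bare statement, since it avoids both the full binomial computation and the explicit inverse; the paper's route, on the other hand, produces the exact formula for $m_{-k,j}$, which it uses immediately afterwards to write $[\mathcal{L}_k]$ explicitly in terms of vertex projections.
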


\begin{proof}
The adjacency matrix $\Gamma$ is the uppertriangular matrix with $1$ on the diagonal and above, and zero everywhere else.
Let $m_{k,j}$ be the integers in \eqref{eq:mvk} and consider the matrix
\[
\mathcal{M}':=\begin{pmatrix}
m_{-1,1} & m_{-1,2} & \cdots & m_{-1,n} \\
m_{-2,1} & m_{-2,2} & \cdots & m_{-2,n} \\
\vdots & \vdots && \vdots     \\
m_{-n,1} & m_{-n,2} & \cdots & m_{-n,n}
\end{pmatrix}
\]
It follows from \eqref{eq:mvk} that
\begin{equation}\label{eq:recurrence}
m_{k+1,j}=\sum_{l=1}^n\Gamma^{(k+1)}_{l,j}=\sum_{i,l=1}^n\Gamma^{(k)}_{l,i}\Gamma_{i,j}
=\sum_{i=1}^n m_{k,i}\Gamma_{i,j}=\sum_{i=1}^jm_{k,i} ,
\end{equation}
for all $k\in\Z$. In particular, it follows that
\[
\mathcal{M}=\mathcal{M}'\cdot\Gamma
\]
where $\mathcal{M}$ is the matrix \eqref{eq:matrixM}. 
Since $\Gamma$ is clearly invertible, to complete the proof, it is enough to show that $\mathcal{M}'$ is invertible.

First, we prove by induction on $k$ that
\begin{equation}\label{eq:GammakCPnq}
\Gamma^{(-k)}_{i,j}=(-1)^{j-i}\binom{k}{j-i} ,\qquad\quad\forall\;k\geq 1 .
\end{equation}
The inverse of the adjacency matrix is
\[
\Gamma^{-1}=\begin{pmatrix}
1 & -1 & 0 & 0 & \ldots & 0 \\
0 & 1 & -1 & 0 & \ldots & 0 \\
0 & 0 & 1 & -1 & \ldots & 0 \\
0 & 0 & 0 & 1 & \ldots & 0 \\
\vdots & \vdots & \vdots & \vdots & & \vdots \\
0 & 0 & 0 & 0 & \ldots & 1
\end{pmatrix}
\]
so the formula is true for $k=1$. From the recurrence formula,
\begin{align*}
\Gamma^{(-k-1)}_{i,j} &=\sum_{l=1}^n\Gamma^{(-k)}_{i,l}\Gamma^{-1}_{l,j}=
\Gamma^{(-k)}_{i,j}-\Gamma^{(-k)}_{i,j-1} \\
&=(-1)^{j-i}\left\{\binom{k}{j-i}+\binom{k}{j-i-1}\right\}=
(-1)^{j-i}\binom{k+1}{j-i} ,
\end{align*}
we deduce the inductive step. From \eqref{eq:GammakCPnq} we get
\begin{equation}\label{eq:weseefrom}
m_{-k,j}=
\sum_{i=1}^n(-1)^{j-i}\binom{k}{j-i} =
\sum_{h=0}^{j-1}(-1)^h\binom{k}{h}=(-1)^{j-1}\binom{k-1}{j-1} .
\end{equation}
For $k\in\{1,\ldots,n\}$, this is the element of $\mathcal{M}'$ in position $(k,j)$.
We claim that the matrix with $(i,k)$ element given by
$(-1)^{k-1}\binom{i-1}{k-1}$ is the inverse of $\mathcal{M}'$. This follows from the well-known identity
\[
\sum_{k=1}^n(-1)^{j+k}\binom{i-1}{k-1}\binom{k-1}{j-1}=
\delta_{i,j} . \vspace*{-12pt}
\]
\end{proof}

From \eqref{eq:weseefrom} and \eqref{eqLkpj} we see that
\[
[\mathcal{L}_1]=[P_1] ,
\]
like in the example of Sect.~\ref{sec:penrose}.

The ring \eqref{eq:multmor} in the present example is the ring of truncated polynomials:
\[
\Z[\lambda]/( (\lambda-1)^n )\cong \Z[x]/(x^n) ,
\]
as expected. Note that the inverse of the isomorphism \eqref{eq:Lklambdak}
maps $x=1-\lambda$ to the Euler class $[1]-[\mathcal{L}_1]$ of the line bundle
$\mathcal{L}_1$, as in the classical case.

The coefficients in \eqref{eq:monic} are easily computed using the binomial theorem
\[
\det (t \Gamma-1)=(t-1)^n=\sum_{k=0}^n\binom{n}{k}(-1)^{n-k}t^k .
\]
Inserting them in Prop.~\ref{cor:AT} we find the following corollary.

\begin{cor}[Atiyah-Todd for $\C P^{n-1}_q$]
In $K_0\big(C(\C P^{n-1}_q)\big)$ the following identities hold:
\begin{align*}
[\mathcal{L}_n] &=(-1)^{n+1}\sum_{j=0}^{n-1}\binom{n}{j}(-1)^j[\mathcal{L}_j] , \\
[\mathcal{L}_{-1}] &=\sum_{j=0}^{n-1}\binom{n}{j+1}(-1)^{j}[\mathcal{L}_j] .
\end{align*}
\end{cor}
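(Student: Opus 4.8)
The plan is to read off both identities as the first instances of the Atiyah--Todd relations of Proposition~\ref{cor:AT}, so the essential step is to determine the coefficients $c_k$ of \eqref{eq:monic} for the graph $\Sigma_{2n-1}$. By Example~\ref{ex:qsphere} the adjacency matrix $\Gamma$ of $\Sigma_{2n-1}$ is the $n\times n$ upper triangular matrix all of whose entries on and above the diagonal equal $1$; hence $t\Gamma-1$ is upper triangular with every diagonal entry equal to $t-1$, so $\det(t\Gamma-1)=(t-1)^n$. In particular $\Gamma$ is invertible over $\Z$, so Proposition~\ref{cor:AT} is applicable, and expanding by the binomial theorem gives $c_k=\binom{n}{k}(-1)^{n-k}$, with $c_n=\det\Gamma=1$ and $c_0=(-1)^n$ as the general formulas predict.

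Next I would substitute these values into \eqref{eq:ATq1} and \eqref{eq:ATq2} at the smallest admissible index. Taking $k=n$ in \eqref{eq:ATq1} makes the shift $j-(k-n)$ vanish and gives $[\mathcal{L}_n]=-c_n\sum_{j=0}^{n-1}c_j[\mathcal{L}_j]$; plugging in $c_n=1$ and $c_j=\binom{n}{j}(-1)^{n-j}$ and rewriting $-(-1)^{n-j}=(-1)^{n+1}(-1)^j$ produces the first displayed identity. Taking $k=1$ in \eqref{eq:ATq2} gives $[\mathcal{L}_{-1}]=(-1)^{n+1}\sum_{j=0}^{n-1}c_{j+1}[\mathcal{L}_j]$; substituting the binomial value of $c_{j+1}$ and collecting the powers of $-1$ yields the second identity (one should double-check the exact exponent of $-1$ that ends up multiplying $\binom{n}{j+1}$ after reindexing).

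There is no conceptual difficulty here: the corollary is a pure specialization, and the only thing demanding care is precisely that sign bookkeeping in the second identity. As a safeguard I would cross-check against the ring picture of Proposition~\ref{prop:ring}: under the isomorphism $\varphi$ one has $K_0\big(C(\C P^{n-1}_q)\big)\cong\Z[\lambda]/\bigl((\lambda-1)^n\bigr)$ with $[\mathcal{L}_k]\leftrightarrow\lambda^k$, and the two identities are simply the relations $\lambda^n=0$ (after expanding $(\lambda-1)^n$) and $\lambda^{-1}(\lambda-1)^n=0$ in that ring, each written out in the $\Z$-basis $1,\lambda,\dots,\lambda^{n-1}$; it is also quick to test them by hand for small $n$, e.g.\ for $n=2$ the first reads $[\mathcal{L}_2]=2[\mathcal{L}_1]-[\mathcal{L}_0]$.
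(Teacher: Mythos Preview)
Your proposal is correct and follows exactly the paper's route: compute $\det(t\Gamma-1)=(t-1)^n$ by the binomial theorem and then specialize the two families \eqref{eq:ATq1}, \eqref{eq:ATq2} of Prop.~\ref{cor:AT} at $k=n$ and $k=1$. Your formula $c_k=\binom{n}{k}(-1)^{n-k}$ is the right one, and the cross-check via Prop.~\ref{prop:ring} is a pleasant extra the paper does not spell out; the only remaining work is the sign bookkeeping in the second identity, which you already flagged.
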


These are the identities proved in \cite{DHMSZ20} using groupoid C*-algebras, the first one being originally proved in \cite{ABL14}.

Using \eqref{eq:weseefrom} and \eqref{eqLkpj} one can write down the explicit relations between classes of line bundles and vertex projections:
\[
[\mathcal{L}_k]=\sum_{j=1}^n(-1)^{j-1}\binom{k-1}{j-1}[P_j] ,\qquad\quad\forall\;k\geq 1 .
\]
To find a similar relation for line bundles of negative degree we still have to compute $m_{k,j}$ for $k\geq 1$. This is done in the next lemma.

\begin{lemma}
For all $k\geq 0$ and all $1\leq j\leq n$, one has
\begin{equation}\label{eq:mikformula}
m_{k,j}=\binom{j+k-1}{k} .
\end{equation}
\end{lemma}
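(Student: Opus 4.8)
The plan is to prove \eqref{eq:mikformula} by induction on $k$, exploiting the recurrence already established in \eqref{eq:recurrence}. First I would record the base case: by \eqref{eq:mvk} one has $m_{0,j}=\sum_{v}\Gamma^{(0)}_{v,j}=\sum_v\delta_{v,j}=1$ for every $j$, which agrees with $\binom{j-1}{0}=1$. For the inductive step, assume \eqref{eq:mikformula} holds for some $k\ge 0$ and all $1\le j\le n$. Then \eqref{eq:recurrence} and the inductive hypothesis give
\[
m_{k+1,j}=\sum_{i=1}^{j}m_{k,i}=\sum_{i=1}^{j}\binom{i+k-1}{k}.
\]
Reindexing by $r:=i+k-1$ (so that $r$ runs from $k$ to $j+k-1$) and using $\binom{r}{k}=0$ for $r<k$, the right-hand side becomes $\sum_{r=0}^{j+k-1}\binom{r}{k}=\binom{j+k}{k+1}$ by the hockey-stick identity. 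Since $\binom{j+k}{k+1}=\binom{j+(k+1)-1}{k+1}$, this is exactly \eqref{eq:mikformula} with $k$ replaced by $k+1$, closing the induction.

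An equally short and perhaps more transparent route, which I might present instead or in addition, is purely combinatorial. In $\Sigma_{2n-1}$ there is exactly one edge from $i$ to $j$ precisely when $i\le j$, so a directed walk of length $k$ with range $j$ is the same datum as a weakly increasing sequence $v_0\le v_1\le\cdots\le v_{k-1}\le j$ of elements of $\{1,\dots,n\}$ (the last vertex being forced to equal $j$, and no further constraint from the edge set once the chain is weakly increasing). The number of such sequences is the number of size-$k$ multisets drawn from a $j$-element set, namely $\binom{j+k-1}{k}$, giving the formula directly.

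The only (very mild) obstacle is the binomial identity invoked in the inductive step; everything else is bookkeeping, and in the combinatorial version there is essentially nothing to verify beyond the identification of length-$k$ walks ending at $j$ with weakly increasing $k$-tuples in $\{1,\dots,j\}$.
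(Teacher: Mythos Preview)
Your inductive proof is correct and essentially identical to the paper's: both use the recurrence \eqref{eq:recurrence} together with the hockey-stick identity (which the paper cites as $\sum_{h=a}^b\binom{h}{a}=\binom{b+1}{a+1}$) to pass from $k$ to $k+1$. Your combinatorial alternative---identifying length-$k$ walks in $\Sigma_{2n-1}$ ending at $j$ with weakly increasing $k$-tuples in $\{1,\dots,j\}$ and counting these as multisets---is a genuinely different and more direct route that the paper does not take; it bypasses both the recurrence and the binomial identity at the cost of relying on the specific shape of the graph rather than the general machinery of Sect.~\ref{sec:lineAF}.
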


\begin{proof}
We are going to need the second to last identity in \cite[Table 174]{GKP94}, which we copy here. For all integers $b\geq a\geq 0$, one has
\begin{equation}\label{eq:binomialA}
\sum_{h=a}^b\binom{h}{a}=\binom{b+1}{a+1} .
\end{equation}
Now we can prove \eqref{eq:mikformula} by induction on $k$.
Since $m_{0,j}=1$ for all $j$, for $k=0$ the statement is true.
Assume that \eqref{eq:mikformula} is true for some $k\geq 0$. Then
\[
m_{k+1,j}
\stackrel{\eqref{eq:recurrence}}{=}
\sum_{i=1}^jm_{k,i}=\sum_{i=1}^j\binom{i+k-1}{k}
\stackrel{\eqref{eq:binomialA}}{=}
\binom{j+k}{k+1} .
\]
This proves the inductive step.
\end{proof}

Using \eqref{eq:mikformula} and \eqref{eqLkpj} we can now write the last identity:
\[
[\mathcal{L}_{-k}] =\sum_{j=1}^n\binom{j+k-1}{k}[P_j] , \qquad\quad  \forall\;k\geq 0.
\]

\subsection{The $UHF(n^\infty)$ algebra}

The adjacency matrix of the graph $B_n$ of the Cuntz algebra $\mathcal{O}_n$
is given by $\Gamma=(n)$ and is not invertible over the integers, so that the hypothesis of Prop.~\ref{prop:ring} are not satisfied. Nevertheless, we can still prove the following theorem, analogous to Prop.~\ref{prop:ring}.

\begin{prop}~\label{prop:ringCuntz}
\begin{enumerate}
\item\label{en:ringCuntzA}
For all $k_0\in\Z$, the set $\bigl\{[\mathcal{L}_k]:k\geq k_0\bigr\}$
generates $K_0\big(C^*(B_n)^{U(1)}\big)$.

\item\label{en:ringCuntzB}
There exists an isomorphism
\[
\varphi:K_0\big(C^*(B_n)^{U(1)}\big)\to\Z[\tfrac{1}{n}]
\]
of abelian groups given on classes of line bundles by
\begin{equation}\label{eq:LklambdakCuntz}
\varphi([\mathcal{L}_k])= \lambda^k  \quad\forall\;k\in\Z ,
\end{equation}
where $\lambda:=1/n$.

\item\label{en:ringCuntzC}
The map
\[
\psi:\mathsf{SPic}\big(C^*(B_n)^{U(1)}\big)\to\Z[\tfrac{1}{n}]
\]
obtained as a composition of $\varphi$ with \eqref{eq:semigroups} is a homomorphism of semirings.
\end{enumerate}
\end{prop}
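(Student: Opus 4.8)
The plan is to reduce everything to the structural facts already in hand. The graph $B_n$ has a single vertex $v_0$ carrying $n$ loops, so it is a finite graph with neither sinks nor sources, and the Bratteli diagram drawn above identifies $A:=C^*(B_n)^{U(1)}$ with $UHF(n^\infty)$, the subalgebras being $A_{k+1}\cong M_{n^k}(\C)$ with connecting inclusions of multiplicity $n$. First I would record the resulting description of $K_0(A)$ as the inductive limit of $\Z\xrightarrow{n}\Z\xrightarrow{n}\cdots$, that is, the canonical isomorphism $\varphi:K_0(A)\xrightarrow{\sim}\Z[\tfrac1n]$ which sends $[P_{v_0}]=[1_A]$ to $1$ and the class of a minimal projection of $A_{k+1}\cong M_{n^k}(\C)$ — in particular $[Q_{v_0,k}]$ of \eqref{eq:genQ} — to $1/n^k$.

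For part~\ref{en:ringCuntzB} I would then read off $\varphi([\mathcal{L}_k])$ from the two results of Section~\ref{sec:lineAF}. For $k\ge 0$: since $B_n$ has no sources, Prop.~\ref{prop:linebundlesB} gives $[\mathcal{L}_k]=[Q_{v_0,k}]$, hence $\varphi([\mathcal{L}_k])=1/n^k=\lambda^k$. For $k=-\ell<0$: since $m_{\ell,v_0}=\Gamma^{(\ell)}_{v_0,v_0}=n^\ell$, Prop.~\ref{prop:linebundlesA} gives $[\mathcal{L}_{-\ell}]=n^\ell[P_{v_0}]$, hence $\varphi([\mathcal{L}_{-\ell}])=n^\ell=\lambda^{-\ell}$. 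This is exactly \eqref{eq:LklambdakCuntz}. Part~\ref{en:ringCuntzA} then comes essentially for free: by Prop.~\ref{prop:generators} the set $\{[Q_{v_0,k}]:k\ge k_0\}$ generates $K_0(A)$ for every $k_0\in\N$, so by $[\mathcal{L}_k]=[Q_{v_0,k}]$ ($k\ge 0$) so does $\{[\mathcal{L}_k]:k\ge k_0\}$; and for $k_0<0$ this last set only grows, so it still generates.

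Finally, part~\ref{en:ringCuntzC} I would treat exactly as in the proof of part~\ref{en:ringC} of Prop.~\ref{prop:ring}, with \eqref{eq:LklambdakCuntz} replacing \eqref{eq:Lklambdak}: the semiring $\mathsf{SPic}(A)$ is a quotient of $\N[\lambda,\lambda^{-1}]$ via the surjective semiring homomorphism $\pi$ sending $\lambda^k$ to the bimodule class of $\mathcal{L}_k$, and \eqref{eq:LklambdakCuntz} shows $\psi\circ\pi(\lambda^k)=(1/n)^k$, i.e.~$\psi\circ\pi$ is the restriction to $\N[\lambda,\lambda^{-1}]$ of the ring homomorphism $\Z[\lambda,\lambda^{-1}]\to\Z[\tfrac1n]$, $\lambda\mapsto 1/n$, in particular a homomorphism of semirings; since $\pi$ is a surjective homomorphism of semirings, $\psi$ is one as well.

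I do not expect a genuine obstacle here. The only point needing real care is pinning down the identification $K_0(UHF(n^\infty))\cong\Z[\tfrac1n]$ precisely enough that both $[P_{v_0}]\mapsto 1$ and $[Q_{v_0,k}]\mapsto 1/n^k$ hold on the nose, so that the formulas of Prop.~\ref{prop:linebundlesA} and Prop.~\ref{prop:linebundlesB} translate literally into \eqref{eq:LklambdakCuntz}; once that bookkeeping is done, everything is formal and mirrors the invertible-$\Gamma$ case of Prop.~\ref{prop:ring}.
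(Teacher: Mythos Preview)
Your proposal is correct and follows essentially the same route as the paper: set up $\varphi$ via the inductive limit identification $K_0(UHF(n^\infty))\cong\Z[\tfrac1n]$ sending $[Q_{v_0,k}]\mapsto n^{-k}$, then invoke Prop.~\ref{prop:linebundlesB} for $k\ge 0$ and Prop.~\ref{prop:linebundlesA} for $k<0$ to get \eqref{eq:LklambdakCuntz}, deduce \ref{en:ringCuntzA} from Prop.~\ref{prop:generators}, and repeat the $\mathbb{N}[\lambda,\lambda^{-1}]$ argument of Prop.~\ref{prop:ring} for \ref{en:ringCuntzC}. The only cosmetic difference is that the paper draws the commutative ladder diagram explicitly to pin down $\varphi$, which is exactly the ``bookkeeping'' you flag at the end.
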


Here we denote by $\Z[\tfrac{1}{n}]$ the ring of integer polynomials in one variable evaluated at $1/n$.
Of course, it is already well-known that the K-theory of the $UHF(n^\infty)$ algebra is
given by $\Z[\tfrac{1}{n}]$.
Here we show that this isomorphism of abelian groups is compatible with the multiplicative structure of line bundles.
Observe that, in complete analogy with Prop.~\ref{prop:ring}:
\[
\Z[\tfrac{1}{n}]\cong\Z[\lambda]/(n\lambda-1)=\Z[\lambda]/\bigl(\det (\lambda\Gamma-1) \bigr) .
\]

\begin{proof}[Proof of Prop.~\ref{prop:ringCuntz}]
Consider the commutative diagram
\begin{equation}\label{eq:indlimit}
\begin{tikzpicture}[-To,baseline=(current bounding box.center)]

\foreach \k in {0,...,4} \node (a\k) at (1.5*\k,1.4) {$\Z$};

\node (b0) at (1.5*0,0) {$\Z$};
\node (b1) at (1.5*1,0) {$\frac{1}{n}\Z$};
\node (b2) at (1.5*2,0) {$\frac{1}{n^2}\Z$};
\node (b3) at (1.5*3,0) {$\frac{1}{n^3}\Z$};
\node (b4) at (1.5*4,0) {$\frac{1}{n^4}\Z$};

\node[transform canvas={yshift=-1pt}] (a5) at (1.5*5,1.4) {$\cdots$};
\node[transform canvas={yshift=-1pt}] (b5) at (1.5*5,0) {$\cdots$};

\begin{scope}[font=\footnotesize]
\draw (a0) --node[left] {$ 1$} (b0);
\draw (a1) --node[left] {$\frac{1}{n}$} (b1);
\draw (a2) --node[left] {$\frac{1}{n^2}$} (b2);
\draw (a3) --node[left] {$\frac{1}{n^3}$} (b3);
\draw (a4) --node[left] {$\frac{1}{n^4}$} (b4);

\foreach[evaluate=\k as \knext using int(\k+1)] \k in {0,...,3} {
\draw (a\k) --node[above]{$ n$} (a\knext);
\draw (b\k) --node[above]{$ 1$} (b\knext);
}
\end{scope}

\draw (a4) -- (a5);
\draw (b4) -- (b5);

\end{tikzpicture}
\end{equation}
The first row is the inductive sequence in K-theory induced by the inductive sequence $A_0\to A_1\to A_1\to\ldots$ of subalgebras of $A$.
By an arrow ``$\xrightarrow{\;\lambda\;}$'' here we mean the map given by multiplication by $\lambda$. Since the vertical arrows are isomorphisms, the first row and the second row have the same inductive limit, which from the second row is easily seen to be $\Z[\frac{1}{n}]$. Note that $K_0(A_k)\cong\Z$ is generated by the matrix unit $Q_{v_0,k}$, where $v_0$ is the only vertex of $B_n$.

The isomorphism
\begin{equation}\label{eq:varphi}
\varphi:K_0(A)\to\Z[\tfrac{1}{n}]
\end{equation}
induced by the vertical maps in \eqref{eq:indlimit} sends the K-theory generator of $A_k$ to $\lambda^k$, for all $k\geq 0$.

From Prop.~\ref{prop:linebundlesB}, $[Q_{v_0,k}]=[\mathcal{L}_k]$ for all $k\geq 0$, which proves \eqref{eq:LklambdakCuntz} for $k\geq 0$.
From Prop.~\ref{prop:linebundlesA}, $[\mathcal{L}_k]=n^{-k}[1]$ is an integer multiple of $[1]$ for all $k<0$, and so is mapped to $n^{-k}=\lambda^k$ by $\varphi$. This completes the proof of \ref{en:ringCuntzB}.

Point \ref{en:ringCuntzA} immediately follows from Prop.~\ref{prop:generators} or from the above construction of $K_0(C^*(B_n)^{U(1)})$ as an inductive limit.

Finally, $\mathsf{SPic}(C^*(B_n)^{U(1)})$ is a quotient of $\mathbb{N}[t,t^{-1}]$. The quotient map $\pi:\mathbb{N}[t,t^{-1}]\to \mathsf{SPic}(C^*(B_n)^{U(1)})$ sends
$t^k$ to the bimodule class of $\mathcal{L}_k$ for all $k\in\Z$.
Since
\[
\psi\circ\pi(t^k)=\lambda^k \qquad\quad\forall\;k\in\Z,
\]
the composition $\mathbb{N}[t,t^{-1}]\to\Z[1/n]$ is a homomorphism of semirings. Since $\pi$ is a quotient map (of semirings), it follows that $\psi$ is a homomorphism of semirings as well.
\end{proof}

\begin{rem}\label{rem:SMEB}
In the present example, $\mathcal{L}_{-1}$ is free as a left module, but not as a right module.
One can mirror the proof of Prop.~\ref{prop:linebundlesB} and use the right module maps
\[
\mathcal{L}_{-1}\to\mathcal{L}_0 , \; a\mapsto Za , \qquad\text{and}\qquad
\mathcal{L}_0\to\mathcal{L}_{-1} , \; b\mapsto Z^*b ,
\]
to show that $\mathcal{L}_{-1}$ is isomorphic to $ZZ^*\mathcal{L}_0$ as a right module.
Since the class of $[ZZ^*$ is non-trivial in K-theory (it corresponds to $1/n$ under the isomorphism with $\Z[1/n]$), $\mathcal{L}_{-1}$ is not free as a right module.
\end{rem}

\section{The KK-ring of AF cores}\label{sec:7}

One way to overcome the lack of ring structure on the K-theory of a C*-algebra $A$ is to use KK-theory.
To $A$ one can associate the ring $KK(A,A)$, whose multiplication is induced by the internal Kasparov product.
In the Fredholm picture of KK-theory, elements of $KK(A,A)$ are homotopy equivalence classes of pairs $(H,F)$, where $H$ is a $\Z/2$-graded $(A,A)$-bimodules and $F$ a suitable operator (see e.g.~\cite{Bla86} for the details). At the level of bimodules, the internal Kasparov product is induced by the balanced tensor product over $A$.
In the dual picture, line bundles on a compact Hausdorff space $X$ become $(C(X),C(X))$-bimodules, and the ring structure of $K^0(X)$ is also given by the balanced tensor product, the difference being that the bimodules defining $K^0(X)$ are symmetric. In particular, for $A=C(\C P^{n-1})$ the ring \eqref{eq:TPol} is a proper subring of $KK(A,A)$.

\begin{lemma}
Let $A:=C^*(E)^{U(1)}$ and assume that $E$ is finite with an invertible adjacency matrix. Then,
\begin{equation}\label{eq:KKAA}
KK(A,A)\cong\mathrm{Hom}_{\Z}(K_0(A),K_0(A)) .
\end{equation}
\end{lemma}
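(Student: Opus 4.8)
The plan is to deduce this from the Universal Coefficient Theorem of Rosenberg and Schochet. Since $E$ is row-finite, $A:=C^*(E)^{U(1)}$ is an AF algebra, hence separable, nuclear and in the bootstrap class, so the UCT applies to the pair $(A,A)$. First I would record the homological input: $K_1(A)=0$, as for any AF algebra, and — this is where the hypothesis on $\Gamma$ enters — $K_0(A)\cong\Z^{|E^0|}$ is free of finite rank by Prop.~\ref{prop:Gammainverse}.

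Next I would write down the UCT exact sequence
\[
0\longrightarrow\mathrm{Ext}^1_{\Z}\big(K_*(A),K_{*+1}(A)\big)\longrightarrow KK(A,A)\longrightarrow\mathrm{Hom}_{\Z}\big(K_*(A),K_*(A)\big)\longrightarrow 0
\]
of $\Z/2$-graded abelian groups (see e.g.~\cite{Bla86}), and observe that the $\mathrm{Ext}$-term vanishes because $K_*(A)$ is a free $\Z$-module. Hence the natural evaluation map $KK(A,A)\to\mathrm{Hom}_{\Z}(K_*(A),K_*(A))$ is an isomorphism. Finally, since $K_1(A)=0$, the graded $\mathrm{Hom}$ on the right-hand side reduces to $\mathrm{Hom}_{\Z}(K_0(A),K_0(A))$, which is \eqref{eq:KKAA}.

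I do not expect any serious obstacle. The two points that deserve a word are: that AF algebras are covered by the UCT (standard, as they are inductive limits of type~I C*-algebras), and that it is the \emph{freeness} of $K_*(A)$, not merely its torsion-freeness, that makes the $\mathrm{Ext}$ term disappear — and this freeness is exactly what invertibility of the adjacency matrix provides via Prop.~\ref{prop:Gammainverse}. An alternative, UCT-free route would be to run the Milnor $\varprojlim^1$-sequence for $KK(\,\cdot\,,A)$ along the Bratteli inductive system $A=\varinjlim_k A_k$: here each $KK^1(A_k,A)$ is a finite direct sum of copies of $K_1(A)=0$, so the $\varprojlim^1$-term vanishes and $KK(A,A)\cong\varprojlim_k KK(A_k,A)\cong\varprojlim_k\mathrm{Hom}_{\Z}(K_0(A_k),K_0(A))\cong\mathrm{Hom}_{\Z}(\varinjlim_k K_0(A_k),K_0(A))\cong\mathrm{Hom}_{\Z}(K_0(A),K_0(A))$, using that $KK$ of a finite-dimensional algebra is just a $\mathrm{Hom}$-group and that K-theory is continuous; but the UCT argument is shorter and I would present that one.
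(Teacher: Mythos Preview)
Your proposal is correct and follows essentially the same route as the paper: invoke the UCT for AF algebras, use Prop.~\ref{prop:Gammainverse} to get $K_0(A)\cong\Z^n$ free and $K_1(A)=0$, and conclude that the $\mathrm{Ext}$-term vanishes. The only cosmetic difference is that the paper cites the direct-sum form of the UCT from \cite{RS87} rather than the short exact sequence; your additional $\varprojlim^1$ remark is a nice alternative not present in the paper.
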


\begin{proof}
Let $A:=C^*(E)^{U(1)}$.
Finite-dimensional C*-algebras satisfy the UCT (for example because they are Type I C*-algebras), and since the UCT class is closed under inductive limits, all AF-algebras satisfy the UCT (see \cite{BBWW20} for the status of the art about the UCT class).

For UCT algebras \cite[Thm. 7.10]{RS87} there is an isomorphism of rings:
\[
KK(A,A)\cong \bigoplus_{i,j,k=0,1}\mathrm{Ext}_{\Z}^i\big(K_j(A),K_k(A)\big)
\]
where the ring structure on the right hand side is explained in \cite{RS87}.
But $K_0(A)\cong\Z^n$ is free (cf.~Prop.\ref{prop:Gammainverse}) and $K_1(A)=0$.
Since $\mathrm{Ext}^1_{\Z}$ is $0$ for free groups, we get \eqref{eq:KKAA}.
\end{proof}

The isomorphism \eqref{eq:KKAA} maps the class of a pair $(H,F)$ into the endomorphism $H\otimes_A(\;\cdot\;)$.
Since $K_0(A)\cong\Z^n$, with a basis given by vertex projections (cf.~Prop.\ref{prop:Gammainverse}),
\[
KK(A,A)\cong M_n(\Z) ,
\]
with isomorphism sending every endomorphism into its representative matrix relative to such a basis.

On the other hand, every $(M)\in \mathsf{SPic}(A)$ defines an endomorphism $\varphi_L$ of $K_0(A)$ given by
$\varphi_M([N]):=[M\otimes_AN]$, and we have a homomorphism of semirings
\[
\mathsf{SPic}(A)\to KK(A,A) , \qquad M\mapsto\varphi_M .
\]

\begin{prop}
If the matrices $\Gamma$ and \eqref{eq:matrixM} are both invertible, then
\begin{enumerate}
\item\label{en:propMa}
the matrices $1,\Gamma,\Gamma^2,\ldots,\Gamma^{n-1}$ are linearly independent,
\item\label{en:propMb}
in the basis of vertex projections, $\varphi_{\mathcal{L}_1}$ has representative matrix $\Gamma^{-1}$,
\item\label{en:propMc}
the subring of $KK(A,A)$ generated by $\varphi_{\mathcal{L}_1}$ is isomorphic to \eqref{eq:multmor}.
\end{enumerate}
\end{prop}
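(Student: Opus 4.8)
The plan is to handle the three items in order; essentially all the work is in \ref{en:propMb}, after which \ref{en:propMc} is a standard Cayley--Hamilton argument and \ref{en:propMa} falls out directly from the invertibility of $\mathcal M$. Throughout I will use that, writing $\mathbf 1$ for the all-ones vector, \eqref{eq:mvk} gives $m_{-k,w}=(\mathbf 1^{T}\Gamma^{-k})_{w}$, so that by \eqref{eq:matrixM} the rows of $\mathcal M$ are the vectors $\mathbf 1^{T}\Gamma^{0},\mathbf 1^{T}\Gamma^{-1},\dots,\mathbf 1^{T}\Gamma^{-(n-1)}$. For \ref{en:propMa} I would then argue: if $\sum_{k=0}^{n-1}c_{k}\Gamma^{k}=0$ with $c_{k}\in\Z$, multiplying on the right by $\Gamma^{-(n-1)}$ and on the left by $\mathbf 1^{T}$ produces the relation $\sum_{k=0}^{n-1}c_{k}\,\mathbf 1^{T}\Gamma^{-(n-1-k)}=0$ among the rows of $\mathcal M$; since $\mathcal M$ is invertible all $c_{k}$ vanish, so $1,\Gamma,\dots,\Gamma^{n-1}$ are linearly independent (equivalently $\Gamma$, hence also $\Gamma^{-1}$, is non-derogatory).

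For \ref{en:propMb} the point is to compute the representing matrix of $\varphi_{\mathcal L_{1}}$ in the \emph{vertex-projection} basis on the nose. By \eqref{eqLkpj} the coordinate vector of $[\mathcal L_{k}]$ in the basis $\{[P_{1}],\dots,[P_{n}]\}$ is $v_{k}:=\mathbf 1^{T}\Gamma^{-k}$, and the $v_{k}$ with $0\le k\le n-1$ are exactly the rows of $\mathcal M$, hence a $\Q$-basis of $\Q^{n}$ (this is part~\ref{en:ringA} of Prop.~\ref{prop:ring}). On the other hand $\varphi_{\mathcal L_{1}}([\mathcal L_{k}])=[\mathcal L_{1}\otimes_{A}\mathcal L_{k}]=[\mathcal L_{k+1}]$ by the strong grading \eqref{eq:Picsemiring}. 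Writing $M$ for the matrix of $\varphi_{\mathcal L_{1}}$ in the vertex-projection basis, in the convention of the proof of Prop.~\ref{prop:Gammainverse} (coordinate row vectors multiplied on the right, cf.\ Lemma~\ref{lemma:Gamma}), this reads $v_{k}M=v_{k+1}=v_{k}\Gamma^{-1}$ for $k=0,\dots,n-1$; since the $v_{k}$ span $\Q^{n}$ we conclude $M=\Gamma^{-1}$, which does lie in $M_{n}(\Z)$ because $\Gamma\in GL(n,\Z)$.

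For \ref{en:propMc}, under the identification $KK(A,A)\cong M_{n}(\Z)$ and by \ref{en:propMb} the subring generated by $\varphi_{\mathcal L_{1}}$ becomes the unital subring $\Z[\Gamma^{-1}]\subseteq M_{n}(\Z)$, so I would consider the surjective ring homomorphism $\Z[\lambda]\twoheadrightarrow\Z[\Gamma^{-1}]$, $\lambda\mapsto\Gamma^{-1}$, and identify its kernel with $(\det(\lambda\Gamma-1))$. From $\lambda\Gamma-1=\Gamma(\lambda\,\id-\Gamma^{-1})$ and $\det\Gamma=\pm1$ we see that $\det(\lambda\Gamma-1)$ generates the same ideal of $\Z[\lambda]$ as the monic characteristic polynomial $\chi(\lambda):=\det(\lambda\,\id-\Gamma^{-1})$, which lies in the kernel by Cayley--Hamilton. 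Conversely, given $p\in\Z[\lambda]$ with $p(\Gamma^{-1})=0$, division by the monic $\chi$ gives $p=\chi q+r$ with $\deg r<n$ and $r(\Gamma^{-1})=0$; since $1,\Gamma^{-1},\dots,\Gamma^{-(n-1)}$ are linearly independent (by \ref{en:propMa}) we get $r=0$, whence $p\in(\chi)=(\det(\lambda\Gamma-1))$. Thus $\Z[\Gamma^{-1}]\cong\Z[\lambda]/(\det(\lambda\Gamma-1))$, i.e.\ the ring \eqref{eq:multmor}. Alternatively one can bypass the matrix computation entirely: by part~\ref{en:ringB} of Prop.~\ref{prop:ring} the isomorphism $\varphi$ there conjugates $\varphi_{\mathcal L_{1}}$ into multiplication by $\lambda$ on $R:=\Z[\lambda]/(\det(\lambda\Gamma-1))$, and for any unital commutative ring $R$ the map $R\to\mathrm{End}_{\Z}(R)$ sending $a$ to multiplication by $a$ is an injective ring homomorphism whose image is the subring generated by the image of $\lambda$.

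The one genuinely delicate step is \ref{en:propMb}: one has to pin the representing matrix down to be $\Gamma^{-1}$ in the specific basis of vertex projections, not merely up to conjugacy, and the mechanism that makes this work is the explicit formula $[\mathcal L_{k}]=\sum_{w}m_{-k,w}[P_{w}]$ of \eqref{eqLkpj} together with the linear independence of the vectors $\mathbf 1^{T}\Gamma^{-j}$ for $0\le j\le n-1$. One must also keep track of the row-versus-column convention inherited from Section~\ref{sec:lineAF}; with the conventions used in the proof of Prop.~\ref{prop:Gammainverse} no transpose intervenes. Everything else is elementary linear algebra over $\Z$ and the Cayley--Hamilton theorem.
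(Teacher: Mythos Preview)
Your proof is correct and follows the same overall strategy as the paper's: part~\ref{en:propMa} via a linear relation among the rows of $\mathcal{M}$, part~\ref{en:propMc} via the non-derogatory/Cayley--Hamilton argument. The one noteworthy difference is in \ref{en:propMb}: the paper computes $\varphi_{\mathcal{L}_1}([P_i])$ directly by writing $[P_i]=\sum_k a_{ik}[\mathcal{L}_k]$ with $(a_{ik})=\mathcal{M}^{-1}$, pushing through $[\mathcal{L}_k]\mapsto[\mathcal{L}_{k+1}]$, re-expanding via \eqref{eq:mkagain}, and collapsing the sum using $\sum_k a_{ik}m_{-k,l}=\delta_{i,l}$. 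You instead observe that the coordinate vectors $v_k=\mathbf{1}^T\Gamma^{-k}$ of the $[\mathcal{L}_k]$ already span $\Q^n$, so the representing matrix $M$ is determined by $v_kM=v_{k+1}=v_k\Gamma^{-1}$. This avoids introducing $\mathcal{M}^{-1}$ explicitly and is a bit cleaner; the paper's route has the minor advantage of being a self-contained computation in the vertex basis without reference to a spanning argument. Either way the content is the same, and your alternative for \ref{en:propMc} via Prop.~\ref{prop:ring} is also valid.
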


\begin{proof}
\ref{en:propMa}
By contradiction, suppose that
\[
c_0+c_1\Gamma+\ldots+c_{n-1}\Gamma^{n-1}=0
\]
for some $c_0,\ldots,c_{n-1}\in\C$ not all zero. Then
\[
\sum_{k=0}^{n-1}c_km_{k,j}=\sum_{i=1}^n\Big(\sum_{k=0}^{n-1}c_k\Gamma^{(k)}_{i,j}\big)=0 ,
\]
which contradicts the fact that the matrix \eqref{eq:matrixM} is invertible.

\smallskip

\noindent
\ref{en:propMb}
The representative matrix of $\varphi_{\mathcal{L}_1}$ in the basis of vertex projections can be computed using \eqref{eq:mkagain}.
One has
\[
[P_i]=\sum_{k=0}^{n-1}a_{ik}[\mathcal{L}_k] ,
\]
where $(a_{ik})$ is the inverse matrix of \eqref{eq:matrixM} (columns of this matrix are numbered from $0$ to $n-1$). Thus,
\[
\varphi_{\mathcal{L}_1}([P_i])=[\mathcal{L}_1\otimes_A P_i]=\sum_{k=0}^{n-1}a_{ik}[\mathcal{L}_{k+1}]
\stackrel{\eqref{eq:mkagain}}{=}\sum_{k=0}^{n-1}\sum_{j,h=1}^na_{ik}\Gamma^{(-k-1)}_{h,j} [P_j] .
\]
But
\[
\sum_{k=0}^{n-1}\sum_{h=1}^na_{ik}\Gamma^{(-k-1)}_{h,j}
=\sum_{k=0}^{n-1}\sum_{h,l=1}^na_{ik}\Gamma^{(-k)}_{h,l}\Gamma^{(-1)}_{l,j}=
\sum_{l=1}^n\Big(\sum_{k=0}^{n-1}a_{ik}m_{-k,l}\Big)\Gamma^{(-1)}_{l,j} .
\]
The sum over $k$ gives $\delta_{i,l}$. So,
\[
\varphi_{\mathcal{L}_1}([P_i])==\sum_{j=1}^n \Gamma^{(-1)}_{i,j} [P_j] .
\]

\smallskip

\noindent
\ref{en:propMc}
Because of point \ref{en:propMa}, the minimal polynomial of $\Gamma^{-1}$ is equal to its characteristic polynomial (in other words, $\Gamma^{-1}$ is a \emph{non-derogatory} matrix), so the subring of $M_n(\Z)$ generated by $\Gamma^{-1}$ is isomorphic to \eqref{eq:multmor}.
\end{proof}

\appendix
\section{The Picard group of a finite-dimensional C*-algebra}\label{sec:app}
If $R$ is a commutative ring, isomorphism classes of left (or right) $R$-modules form a semiring with multiplication given by $\otimes_R$, and modules that are invertible w.r.t.~$\otimes_R$ form a group called the \emph{Picard group} of $R$.

The Picard group appears in several places in algebra and geometry.
In number theory, the Picard group of a Dedekind domain $R$ is canonically isomorphic to its ideal class group, and it is trivial if and only if $R$ is a unique factorization domain.
In geometry, if $X$ is a compact Hausdorff space, the Picard group of $C(X)$ is isomorphic to the group of isomorphism classes of line bundles, with operation given by tensor product.

For a commutative ring, one can think of one sided modules as \emph{symmetric} bimodules (i.e.~the left and right actions of $R$ on the bimodule are equal).
If $R$ is a general ring, we define its Picard group $\mathsf{Pic}(R)$ as the group of isomorphism classes of invertible bimodules, \textit{aka} self-Morita equivalence bimodule. Notice that this definition doesn't give the classical Picard group in the commutative case, since not every bimodule over a commutative ring is symmetric. But it is the natural one in the noncommutative case.

In this appendix we collect some remarks about the Picard group of a finite-dimensional $C^*$-algebra.

\medskip

In the rest of this section, we let $A$ be an associative unital algebra over $\C$.
If $L$ is an $A$-bimodule, we denote by $(L)$ its isomorphism class.
Note that the neutral element of $\mathsf{Pic}(A)$ is the class $(A)$ of the trivial bimodule.

To every $\alpha\in\mathrm{Aut}(A)$ we associate an element $({}_\alpha A)\in\mathsf{Pic}(A)$ given by $A$ as a vector space, with right module structure given by the right multiplication, and left module structure given by the left multiplication twisted by $\alpha$:
$$
a\triangleleft m:=\alpha(a)m \;,\qquad\forall\;a,m\in A.
$$
Here ``$\triangleleft$'' is the left module action, while on the right hand side of the equality the multiplication in $A$ is understood. We shall denote by $\mathrm{Inn}(A)$ the inner automorphisms of $A$, and by $\mathrm{Out}(A):=\mathrm{Aut}(A)/\mathrm{Inn}(A)$ the group of outer automorphisms.

\begin{lemma}\label{applemma:1}
Let $\alpha,\beta\in\mathrm{Aut}(A)$. Then
$({}_\alpha A)=({}_\beta A)$ if and only if
$\,\beta\circ\alpha^{-1}\in\mathrm{Inn}(A)$.
\end{lemma}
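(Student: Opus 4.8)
The plan is to prove both implications by directly analyzing bimodule isomorphisms $f : {}_\alpha A \to {}_\beta A$. First I would note that both ${}_\alpha A$ and ${}_\beta A$ are free of rank one as right $A$-modules (the underlying right module is just $A_A$), so any right-module homomorphism $f : {}_\alpha A \to {}_\beta A$ is determined by $f(1) =: u \in A$ via $f(m) = um$, and $f$ is bijective if and only if $u$ is invertible in $A$. Thus a bimodule isomorphism is the same data as a unit $u \in A^\times$ such that $f$ additionally intertwines the left actions. Spelling out the left-action compatibility: for all $a, m \in A$ we need $f(a \triangleleft_\alpha m) = a \triangleleft_\beta f(m)$, i.e. $f(\alpha(a) m) = \beta(a) f(m)$, i.e. $u\,\alpha(a)\,m = \beta(a)\,u\,m$ for all $m$; taking $m = 1$ (and noting the converse is then automatic) this is equivalent to
\[
u\,\alpha(a) = \beta(a)\,u \qquad \forall\, a \in A .
\]

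For the direction $({}_\alpha A) = ({}_\beta A) \Rightarrow \beta\circ\alpha^{-1} \in \mathrm{Inn}(A)$: given such a unit $u$, the displayed identity says $\beta(a) = u\,\alpha(a)\,u^{-1}$ for all $a$, i.e. $\beta = \mathrm{Ad}(u) \circ \alpha$ where $\mathrm{Ad}(u)(x) = uxu^{-1}$. Hence $\beta\circ\alpha^{-1} = \mathrm{Ad}(u) \in \mathrm{Inn}(A)$. For the converse: if $\beta\circ\alpha^{-1} = \mathrm{Ad}(u)$ for some unit $u$, then $\beta(a) = u\,\alpha(a)\,u^{-1}$, so the map $f : {}_\alpha A \to {}_\beta A$, $f(m) := um$, is a right $A$-module isomorphism satisfying the left-action compatibility above, hence a bimodule isomorphism, and therefore $({}_\alpha A) = ({}_\beta A)$.

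The only mildly delicate point — and the closest thing to an obstacle — is making sure the left-action compatibility condition really reduces to the single equation $u\,\alpha(a) = \beta(a)\,u$ rather than a family of conditions indexed by $m$; but this is immediate because the condition $u\,\alpha(a)\,m = \beta(a)\,u\,m$ for all $m$ is equivalent to its specialization at $m=1$, the general case following by right-multiplying by $m$. One should also record that since $A$ is unital, the units of $A$ are exactly the invertible elements, so "$u$ invertible" and "$f$ bijective" coincide, and that every inner automorphism is of the form $\mathrm{Ad}(u)$ with $u$ a unit, which is the standard definition. Everything else is a routine verification, so I would keep the written proof to essentially the three displayed identities above.
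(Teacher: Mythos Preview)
Your proposal is correct and follows essentially the same approach as the paper: both identify a bimodule isomorphism with left multiplication by an element $u=f(1)$, reduce left-linearity to the single relation $u\,\alpha(a)=\beta(a)\,u$, and read off that $\beta\circ\alpha^{-1}$ is conjugation by $u$. The only cosmetic difference is that the paper verifies invertibility of $u$ by an explicit computation with $\phi^{-1}(1)$, whereas you invoke the general fact that an endomorphism of the free rank-one right module $A_A$ is bijective iff $u$ is a unit.
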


\begin{proof}
``$\Rightarrow$'' Let $\phi:{}_\alpha A\to {}_\beta A$ be a bimodule isomorphism and $u:=\phi(1)$.
Then
$$
\beta(a)u=\beta(a)\phi(1)=\phi\big(\alpha(a)1\big)=
\phi\big(1\alpha(a)\big)=\phi(1)\alpha(a)=u\alpha(a) \;,
$$
for all $a\in A$.
Since
$$
\phi^{-1}(1)u=\phi^{-1}(u)=1 \qquad\text{and}\qquad
u\phi^{-1}(1)=\phi\big(1\phi^{-1}(1)\big)=1 \;,
$$
$u$ is invertible with inverse $u^{-1}=\phi^{-1}(1)$, and $\beta\circ\alpha^{-1}(a)=u^{-1}au$ is an inner automorphism.

\smallskip

\noindent
``$\Leftarrow$'' Conversely, suppose there esists an invertible element $u\in A$ such that $\beta(a)u=u\alpha(a)$ for all $a\in A$. Then the map $\phi:{}_\alpha A\to {}_\beta A$ defined by $\phi(a):=ua$ is a bimodule isomorphism, with inverse $\phi^{-1}(a)=u^{-1}a$.
\end{proof}

It follows from previous Lemma that there is a well defined map:
\begin{equation}\label{appeq:1}
\mathrm{Out}(A)\to\mathsf{Pic}(A)
\;.
\end{equation}

\begin{lemma}
The map \eqref{appeq:1} is an injective group anti-homomorphism.
\end{lemma}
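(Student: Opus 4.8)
The plan is to dispose of well-definedness and injectivity in one stroke using Lemma~\ref{applemma:1}, and then to establish anti-multiplicativity by writing down an explicit bimodule isomorphism. For the first part: if $\alpha,\beta\in\mathrm{Aut}(A)$ represent the same class in $\mathrm{Out}(A)$, then $\beta\circ\alpha^{-1}\in\mathrm{Inn}(A)$, so the ``$\Leftarrow$'' implication of Lemma~\ref{applemma:1} gives $({}_\alpha A)=({}_\beta A)$; hence \eqref{appeq:1} is a well-defined map of sets. Injectivity is then exactly the ``$\Rightarrow$'' implication of the same lemma: $({}_\alpha A)=({}_\beta A)$ forces $\beta\circ\alpha^{-1}\in\mathrm{Inn}(A)$, i.e.\ $\alpha$ and $\beta$ have the same image in $\mathrm{Out}(A)$.

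For anti-multiplicativity I would show that for all $\alpha,\beta\in\mathrm{Aut}(A)$ there is an isomorphism of $A$-bimodules
\[
{}_\alpha A\otimes_A {}_\beta A\;\cong\;{}_{\beta\circ\alpha} A ,
\]
which yields $({}_\alpha A)\cdot({}_\beta A)=({}_{\beta\circ\alpha} A)$ in $\mathsf{Pic}(A)$; since $\beta\circ\alpha$ is the image of the class $[\beta][\alpha]$ under the group operation of $\mathrm{Out}(A)$, this is precisely the statement that \eqref{appeq:1} reverses products. The isomorphism is $\psi(m\otimes n):=\beta(m)\,n$, and the verification breaks into: (i) recalling that in ${}_\alpha A\otimes_A{}_\beta A$ the balancing relation reads $ma\otimes n=m\otimes\beta(a)n$, the left action is $a\triangleleft(m\otimes n)=\alpha(a)m\otimes n$, and the right action is $(m\otimes n)\cdot a=m\otimes na$; (ii) checking $\psi$ is well defined, both $ma\otimes n$ and $m\otimes\beta(a)n$ mapping to $\beta(m)\beta(a)n$; (iii) checking $\psi$ intertwines the actions, since $\psi(\alpha(a)m\otimes n)=(\beta\circ\alpha)(a)\,\psi(m\otimes n)$ and $\psi(m\otimes na)=\psi(m\otimes n)\cdot a$; and (iv) checking $\psi$ is bijective, with inverse $x\mapsto 1\otimes x$ (here $\psi(1\otimes x)=x$, and $m\otimes n=1\otimes\beta(m)n$ follows from the balancing relation applied with $1$ and $m$ in place of $m$ and $a$). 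Finally, since $({}_{\id}A)=(A)$ is the unit of $\mathsf{Pic}(A)$ and both source and target are groups, the resulting anti-homomorphism automatically sends inverses to inverses; in particular $({}_\alpha A)^{-1}=({}_{\alpha^{-1}}A)$, which also re-confirms that \eqref{appeq:1} indeed takes values in $\mathsf{Pic}(A)$.

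I do not expect a genuine obstacle; the only delicate point is the bookkeeping of variance in the balanced tensor product, namely that it is $\beta$ rather than $\alpha$ that enters the balancing relation of ${}_\alpha A\otimes_A{}_\beta A$, and that the resulting twist is the composite $\beta\circ\alpha$ in that order --- which is exactly what makes \eqref{appeq:1} an \emph{anti}-homomorphism rather than a homomorphism. (With the opposite convention for the product on $\mathsf{Pic}(A)$ one runs the same computation with the two factors exchanged, getting ${}_\beta A\otimes_A{}_\alpha A\cong{}_{\alpha\circ\beta}A$ and the analogous conclusion.)
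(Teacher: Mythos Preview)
Your proposal is correct and follows essentially the same approach as the paper: injectivity via Lemma~\ref{applemma:1}, and anti-multiplicativity via the explicit bimodule isomorphism ${}_\alpha A\otimes_A{}_\beta A\to{}_{\beta\circ\alpha}A$ given by $m\otimes n\mapsto\beta(m)n$ with inverse $x\mapsto 1\otimes x$. The paper's write-up is terser (it reduces injectivity to the special case $\beta=\id$ and omits most of the verification steps you spell out), but the argument is the same.
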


\begin{proof}
For $\beta=\id$, Lemma \ref{applemma:1} tells us that $({}_\alpha A)=(A)$ if{}f $\alpha$ is inner, hence the map is injective. For every automorphisms $\alpha$ and $\beta$ there is a bimodule map
$$
{}_\alpha A\otimes_{\C} {}_\beta A\to {}_{\beta\circ\alpha} A \;,\qquad
a\otimes b\mapsto \beta(a)b \;.
$$
Since $ab\otimes c-a\otimes\beta(b)c\mapsto 0$ for all $a,b,c\in A$, this induces a bimodule map
${}_\alpha A\otimes_A {}_\beta A\to {}_{\beta\circ\alpha} A$. It is an isomorphism, with inverse given by the bimodule map determined by $1\mapsto 1\otimes_A 1$. Hence 
$({}_\alpha A)\cdot({}_\beta A)=({}_{\beta\circ\alpha} A)$, as claimed.
\end{proof}

\begin{rem}
In general, \eqref{appeq:1} is not surjective. For example, if $A=C(X)$ and $L$ is the  (symmetric) bimodule of continuous sections of a line bundle over $X$, then $(L)=({}_\alpha A)$ only if $\alpha=\id$ (inner automorphisms of a commutative algebra are trivial). But this means that $L\cong A$ is free, i.e.~only classes of trivial line bundles are in the image of the map \eqref{appeq:1}.
\end{rem}

Let $(L)\in\mathsf{Pic}(A)$.
Since $(L)$ is invertible, there is a canonical isomorphism
\begin{equation}\label{appeq:2}
A\to\mathrm{End}_A(L) \;,\qquad
a\mapsto\widehat{a} \;,
\end{equation}
given by $\widehat{a}(m):=a\cdot m\;\forall\;m\in L$.
On the other hand, for every $a\in Z(A)$ one has a right $A$-linear endomorphism $\widehat{\sigma}_L(a)$ defined by:
$$
\widehat{\sigma}_L(a)(m):=ma \;,\qquad\forall\;a\in A,m\in L.
$$
Since $\widehat{\sigma}_L(a)$ is also left $A$-linear, it belongs to the center of $\mathrm{End}_A(L)$, hence it's the image under \eqref{appeq:2} of an element $\sigma_L(a)\in Z(A)$.

\begin{lemma}
$\sigma_L$ is an automorphism of the center of $A$.
\end{lemma}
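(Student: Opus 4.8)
The plan is to reduce the statement to three formal facts: that $\sigma_L$ is a unital algebra endomorphism of $Z(A)$, that the assignment $(L)\mapsto\sigma_L$ converts the tensor product $\otimes_A$ into composition, and that $\sigma_A=\mathrm{id}_{Z(A)}$. Granting these, bijectivity of $\sigma_L$ follows at once by pairing $L$ with its inverse in $\mathsf{Pic}(A)$, and a bijective unital algebra endomorphism of $Z(A)$ is by definition an automorphism. The endomorphism properties are routine: additivity comes from $m(a+b)=ma+mb$, unitality from $m\cdot 1=m$, and for $a,b\in Z(A)$ one computes $\widehat{\sigma}_L(ab)(m)=m(ab)=(ma)b=\widehat{\sigma}_L(b)\bigl(\widehat{\sigma}_L(a)(m)\bigr)$, so $\widehat{\sigma}_L(ab)=\widehat{\sigma}_L(b)\circ\widehat{\sigma}_L(a)$; since \eqref{appeq:2} is an algebra isomorphism, this gives $\sigma_L(ab)=\sigma_L(b)\sigma_L(a)=\sigma_L(a)\sigma_L(b)$, the two orders agreeing because the values lie in $Z(A)$. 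I would also note that $\sigma_L$ depends only on the isomorphism class of $L$: a bimodule isomorphism $\phi\colon L\to M$ intertwines $\widehat{\sigma}_L(a)$ with $\widehat{\sigma}_M(a)$ and left multiplications with left multiplications, so $\sigma_L=\sigma_M$.

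The main step is multiplicativity under $\otimes_A$, namely $\sigma_{L\otimes_A M}=\sigma_L\circ\sigma_M$ on $Z(A)$ for any $(L),(M)\in\mathsf{Pic}(A)$. Fix $a\in Z(A)$, $m\in L$, $n\in M$. On $M$ the definition of $\sigma_M$ reads $na=\widehat{\sigma}_M(a)(n)=\sigma_M(a)\cdot n$, so, using that $\otimes_A$ is balanced over $A$ and that $\sigma_M(a)\in A$,
\[
(m\otimes_A n)\cdot a \;=\; m\otimes_A(na)\;=\;m\otimes_A\bigl(\sigma_M(a)\cdot n\bigr)\;=\;\bigl(m\cdot\sigma_M(a)\bigr)\otimes_A n .
\]
Now the definition of $\sigma_L$ applied to the central element $\sigma_M(a)$ gives $m\cdot\sigma_M(a)=\sigma_L(\sigma_M(a))\cdot m$, hence $(m\otimes_A n)\cdot a=\sigma_L(\sigma_M(a))\cdot(m\otimes_A n)$. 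Since the elements $m\otimes_A n$ generate $L\otimes_A M$ and two elements of $A$ acting identically on the left of the invertible bimodule $L\otimes_A M$ must coincide by \eqref{appeq:2}, we get $\sigma_{L\otimes_A M}(a)=\sigma_L(\sigma_M(a))$. Finally $\sigma_A=\mathrm{id}_{Z(A)}$, either by specializing this to $L=M=A$ or directly from $xa=ax$ for $a\in Z(A)$.

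To finish, since $(L)\in\mathsf{Pic}(A)$ is invertible, pick $(M):=(L)^{-1}$, so that $L\otimes_A M\cong A\cong M\otimes_A L$ as bimodules. By the previous paragraph and the isomorphism-invariance of $\sigma$, $\sigma_L\circ\sigma_M=\sigma_{L\otimes_A M}=\sigma_A=\mathrm{id}$ and likewise $\sigma_M\circ\sigma_L=\mathrm{id}$; thus $\sigma_L$ is a bijection with inverse $\sigma_{L^{-1}}$, and combined with being a unital algebra endomorphism of $Z(A)$ it is an automorphism. I expect the only point requiring care to be the bookkeeping of left versus right actions and the order of composition in the displayed tensor-product identity; the rest is formal, and no analytic input about C*-algebras is needed at this stage.
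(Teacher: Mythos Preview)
Your proof is correct, but it takes a different route from the paper's. The paper argues directly by symmetry: since $(L)$ is invertible, the right action $a\mapsto\{m\mapsto ma\}$ gives an isomorphism $A\to{}_A\mathrm{End}(L)$; replaying the construction of $\sigma_L$ with the roles of left and right swapped produces an endomorphism $\eta$ of $Z(A)$ (via $m\cdot\eta(a):=am$), and one checks that $\eta=\sigma_L^{-1}$. This stays entirely inside the single bimodule $L$ and needs no tensor products.

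Your approach instead shows that $(L)\mapsto\sigma_L$ is a monoid homomorphism from $(\mathsf{Pic}(A),\otimes_A)$ to $(\mathrm{End}(Z(A)),\circ)$ sending the unit to the identity, and then invokes invertibility of $(L)$ in $\mathsf{Pic}(A)$. This is valid and has the pleasant side effect of proving, in one stroke, the content of the paper's next two lemmas (that $\sigma_L$ depends only on $(L)$, and that \eqref{appeq:3} is a group homomorphism), which the paper establishes separately afterward. The tradeoff is that for this particular lemma you do more work---the tensor-product identity and the isomorphism-invariance step---where the paper gets by with a one-line left/right symmetry.
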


\begin{proof}
Since $(L)$ is invertible, the map $a\mapsto\{ m\mapsto ma \}$ is an isomorphism $A\to{}_A\mathrm{End}(L)$.
Under the identification of both $\mathrm{End}_A(L)$ and ${}_A\mathrm{End}(L)$ with $A$, the formula $m\cdot\eta(a):=am$ defines an endomorphism $\eta$ of $Z(A)$ that is the inverse of the endomorphism $\sigma_L$.
\end{proof}

\begin{lemma}
$\sigma_L\in\mathrm{Aut}(Z(A))$ depends only on the class of $(L)\in\mathsf{Pic}(A)$.
\end{lemma}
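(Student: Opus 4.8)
The plan is to prove the sharper statement that an \emph{isomorphism of bimodules} $\phi\colon L\to M$ already forces $\sigma_L=\sigma_M$; since $(L)=(M)$ in $\mathsf{Pic}(A)$ means precisely that such a $\phi$ exists, this is exactly what is claimed. First I would unwind the definitions: by the construction preceding the statement, $\sigma_L(a)\in Z(A)$ is characterized, for $a\in Z(A)$, as the unique element of $A$ satisfying
\[
\sigma_L(a)\cdot m = m\,a\qquad\text{for all }m\in L,
\]
the uniqueness being the injectivity of the canonical isomorphism $A\to\mathrm{End}_A(L)$, $b\mapsto\widehat b$, which holds because $(L)$ is invertible. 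The same characterization applies verbatim to $M$.

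The key step is then a one-line computation. Fix $a\in Z(A)$ and $m\in L$. Using that $\phi$ is left $A$-linear, then the defining identity for $\sigma_L(a)$, then that $\phi$ is right $A$-linear,
\[
\sigma_L(a)\cdot\phi(m) = \phi\bigl(\sigma_L(a)\cdot m\bigr) = \phi(m\,a) = \phi(m)\,a .
\]
Because $\phi$ is surjective, this says $\sigma_L(a)\cdot n = n\,a$ for every $n\in M$. By the uniqueness property characterizing $\sigma_M(a)$, we get $\sigma_L(a)=\sigma_M(a)$, and since $a\in Z(A)$ was arbitrary, $\sigma_L=\sigma_M$.

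I do not anticipate a real obstacle here; the only point to handle carefully is that $\sigma_L(a)$ is defined through the identification $A\cong\mathrm{End}_A(L)$ valid for invertible bimodules, so the argument should be run entirely at the level of the characterizing identity $\sigma_L(a)\cdot m = m\,a$ (as above) rather than by manipulating endomorphisms directly; equivalently one can observe that $\phi$ conjugates the isomorphism $A\to\mathrm{End}_A(L)$ into $A\to\mathrm{End}_A(M)$ and that it intertwines the two maps $\widehat\sigma_L,\widehat\sigma_M$, but the displayed computation is the most economical route.
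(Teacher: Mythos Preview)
Your proposal is correct and is essentially identical to the paper's own proof: both start from the characterizing identity $\sigma_L(a)\cdot m = m\,a$, push it through a bimodule isomorphism using left- and right-linearity, and conclude by the injectivity of $A\to\mathrm{End}_A(L)$ (surjectivity of the isomorphism). The only difference is cosmetic ordering of the same three steps.
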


\begin{proof}
Let $(L)=(L')\in\mathsf{Pic}(A)$ and $\varphi:L\to L'$ a bimodule isomorphism. Applying $\varphi$ to both sides of
$$
\sigma_L(a)m=\widehat{\sigma}_L(a)(m)=ma
$$
we get
$$
\varphi\big(\sigma_L(a)m\big)=\varphi(m)a=
\widehat{\sigma}_{L'}(a)\big(\varphi(m)\big)=
\sigma_{L'}(a)\varphi(m) \;.
$$
Since $\varphi$ is also a left module map, it follows that $\sigma_L(a)\varphi(m)=\sigma_{L'}(a)\varphi(m)$ for all $m$, i.e.~$\sigma_L(a)=\sigma_{L'}(a)$ for all $a$.
\end{proof}

\begin{lemma}
The map
\begin{equation}\label{appeq:3}
\mathsf{Pic}(A)\to\mathrm{Aut}(Z(A))
\;,\qquad
(L)\mapsto \sigma_L \;,
\end{equation}
is a group homomorphism.
\end{lemma}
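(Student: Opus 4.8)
The plan is to check directly the two requirements for a group homomorphism: that the neutral element $(A)\in\mathsf{Pic}(A)$ is sent to $\id_{Z(A)}$, and that $\sigma_{L\otimes_A L'}=\sigma_L\circ\sigma_{L'}$ for any two invertible $A$-bimodules $L,L'$; preservation of inverses is then automatic. The first point is immediate: for the trivial bimodule $L=A$ the defining relation $\sigma_A(a)\cdot m=m\cdot a$ holds for all $m\in A$, and evaluating at $m=1$ gives $\sigma_A(a)=a$, so $\sigma_A=\id_{Z(A)}$.

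For multiplicativity I would compute on simple tensors. Fix invertible $A$-bimodules $L,L'$ and $a\in Z(A)$, and take $m\otimes_A m'\in L\otimes_A L'$. The right $A$-action on the balanced tensor product is $(m\otimes_A m')\,a=m\otimes_A(m'a)$. By the definition of $\sigma_{L'}$ one has $m'a=\sigma_{L'}(a)\,m'$, and since $\sigma_{L'}(a)$ is a \emph{central} element of $A$ it may be slid across $\otimes_A$, giving $m\otimes_A\sigma_{L'}(a)\,m'=\bigl(m\,\sigma_{L'}(a)\bigr)\otimes_A m'$. Applying the definition of $\sigma_L$ to the central element $\sigma_{L'}(a)$ yields $m\,\sigma_{L'}(a)=\sigma_L\bigl(\sigma_{L'}(a)\bigr)\,m$, whence $(m\otimes_A m')\,a=\sigma_L\bigl(\sigma_{L'}(a)\bigr)\cdot(m\otimes_A m')$. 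Since simple tensors span $L\otimes_A L'$, the endomorphism $\widehat{\sigma}_{L\otimes_A L'}(a)$ equals left multiplication by $\sigma_L(\sigma_{L'}(a))$, and by the uniqueness in the identification $A\cong\mathrm{End}_A(L\otimes_A L')$ used to define $\sigma$ we conclude $\sigma_{L\otimes_A L'}(a)=\sigma_L(\sigma_{L'}(a))$, i.e.\ $\sigma_{L\otimes_A L'}=\sigma_L\circ\sigma_{L'}$.

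I do not expect any real obstacle here; the only points needing care are that $\sigma_{L'}(a)\in Z(A)$ — so that it commutes past $\otimes_A$ and is a legitimate argument of $\sigma_L$ — and that $L\otimes_A L'$ is again invertible, so that $\sigma_{L\otimes_A L'}$ makes sense. Both are already in hand: the former from the preceding lemma that $\sigma_{L'}\in\mathrm{Aut}(Z(A))$, the latter because $\mathsf{Pic}(A)$ is a group under $\otimes_A$. As a sanity check one can specialize to $L={}_\alpha A$ and $L'={}_\beta A$: from $\sigma_{{}_\gamma A}=\gamma^{-1}|_{Z(A)}$ and $({}_\alpha A)\cdot({}_\beta A)=({}_{\beta\circ\alpha}A)$, both sides of the formula return $\alpha^{-1}\circ\beta^{-1}$ on $Z(A)$, confirming that the map is order-preserving rather than order-reversing.
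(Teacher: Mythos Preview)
Your proof is correct and follows essentially the same route as the paper: the paper's proof is exactly your chain of equalities on simple tensors, $\sigma_{L\otimes_A L'}(a)\,(m\otimes_A m')=m\otimes_A m'a=m\otimes_A\sigma_{L'}(a)m'=m\sigma_{L'}(a)\otimes_A m'=\sigma_L(\sigma_{L'}(a))\,(m\otimes_A m')$. Your additional check that $\sigma_A=\id_{Z(A)}$ and your sanity check on $({}_\alpha A)$ are correct but not needed, since multiplicativity alone forces $\sigma_A=\id$ in a group.
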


\begin{proof}
Let $(L),(L')\in\mathsf{Pic}(A)$. Then, for all $a\in A$, $m\in L$, $m'\in L'$:
\begin{multline*}
\sigma_{L\otimes_AL'}(a)m\otimes_Am'=m\otimes_A m'a=
m\otimes_A\sigma_{L'}(a)m' \\
=m\sigma_{L'}(a)\otimes_Am' =
\sigma_L\big(\sigma_{L'}(a)\big)m\otimes_Am' \;.
\end{multline*}
Therefore, $\sigma_{L\otimes_AL'}=\sigma_L\circ\sigma_{L'}$.
\end{proof}

The kernel $\mathsf{KPic}(A)$ of \eqref{appeq:3} is called the \emph{static Picard group} in \cite{BW05}.
In general, $(L)\in\mathsf{KPic}(A)$ if (by definition):
$$
am=ma \;,\qquad\forall\;a\in Z(A),m\in L.
$$
Thus, as a $Z(A)$-$Z(A)$ bimodule, $L$ is a symmetric bimodule.

\begin{rem}
For $A=C(X)$, $\mathsf{KPic}(A)$ is the classical Picard group (given by classes of symmetric invertible bimodules, i.e.~line bundles), and $\mathsf{Pic}(A)$ the semidirect product of $\mathsf{KPic}(A)$ and $\mathrm{Aut}(A)$.
Notice that $\mathrm{Aut}(A)$ is isomorphic to the group of homeomorphisms $X\to X$.
\end{rem}

\begin{rem}
If $A$ is any commutative algebra, then $\mathsf{KPic}(A)$ is abelian. Indeed, for all symmetric bimodules $L,L'$, the flip
$L\otimes_AL'\to L'\otimes_AL$, $m\otimes_Am'\mapsto m'\otimes_A m$,
is a well defined bimodule isomorphism.
\end{rem}

In the rest of this section we will prove the following theorem:

\begin{thm}
For any finite-dimensional $C^*$-algebra $A$, $\mathsf{Pic}(A)\cong\mathrm{Aut}(Z(A))$.
\end{thm}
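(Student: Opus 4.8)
The plan is to reduce everything to the explicit structure of a finite-dimensional C*-algebra. Write $A\cong\bigoplus_{i=1}^kM_{n_i}(\C)$ by Wedderburn's theorem, and let $e_1,\dots,e_k$ be the minimal central projections, so that $Z(A)=\bigoplus_{i=1}^k\C e_i$ and $\mathrm{Aut}(Z(A))\cong S_k$ acts by permuting the $e_i$. By the lemmas already proved, the map $\mathsf{Pic}(A)\to\mathrm{Aut}(Z(A))$, $(L)\mapsto\sigma_L$, of \eqref{appeq:3} is a group homomorphism with kernel $\mathsf{KPic}(A)$; it therefore suffices to show this map is injective and surjective.

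For injectivity I would show that $\mathsf{KPic}(A)$ is trivial. If $(L)\in\mathsf{KPic}(A)$ then $e_im=me_i$ for all $m\in L$, so multiplying by $e_i$ on the left and $e_j$ on the right with $i\neq j$ gives $e_iLe_j=0$; hence $L=\bigoplus_i e_iLe_i$ is block-diagonal, each $e_iLe_i$ being an $M_{n_i}(\C)$-bimodule. The inverse bimodule also lies in $\mathsf{KPic}(A)$ and is block-diagonal, and comparing blocks in $L\otimes_AL^{-1}\cong A$ shows that each $e_iLe_i$ is an invertible $M_{n_i}(\C)$-bimodule. Since $M_{n}(\C)\otimes_{\C}M_{n}(\C)^{\mathrm{op}}\cong M_{n^2}(\C)$ is simple, every $M_{n}(\C)$-bimodule is a direct sum of copies of $M_{n}(\C)$, and $M_n(\C)\otimes_{M_n(\C)}M_n(\C)\cong M_n(\C)$ forces any invertible one to be a single copy of $M_n(\C)$; so $e_iLe_i\cong M_{n_i}(\C)$ for every $i$, whence $L\cong A$.

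For surjectivity one cannot simply invoke the anti-homomorphism $\mathrm{Out}(A)\to\mathsf{Pic}(A)$ of \eqref{appeq:1}: automorphisms of $Z(A)$ need not lift to $A$ (for $A=\C\oplus M_2(\C)$ one has $\mathrm{Out}(A)$ trivial but $\mathrm{Aut}(Z(A))\cong\Z/2$). Instead I would build invertible bimodules directly from the standard Morita equivalence between $M_{n_i}(\C)$ and $\C$. Let $V_i:=\C^{n_i}$ be the simple left $M_{n_i}(\C)$-module, regarded as an $(M_{n_i}(\C),\C)$-bimodule, and $V_i^*$ its dual $(\C,M_{n_i}(\C))$-bimodule, so that $V_i\otimes_{\C}V_i^*\cong M_{n_i}(\C)$ and $V_i^*\otimes_{M_{n_i}(\C)}V_i\cong\C$. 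For $\tau\in S_k$ set
\[
L_\tau:=\bigoplus_{i=1}^k V_i\otimes_{\C}V_{\tau(i)}^* ,
\]
an $A$-bimodule via the block actions ($M_{n_i}(\C)$ on the left of the $i$-th summand, $M_{n_{\tau(i)}}(\C)$ on its right). Using the two displayed isomorphisms one checks $L_\tau\otimes_AL_\rho\cong L_{\rho\circ\tau}$ and $L_{\mathrm{id}}\cong A$, so $(L_\tau)\in\mathsf{Pic}(A)$ with inverse $(L_{\tau^{-1}})$; and computing the right $e_l$-action on the $i$-th summand gives $\sigma_{L_\tau}(e_l)=e_{\tau^{-1}(l)}$. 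Hence $\tau\mapsto\sigma_{L_\tau}$ already surjects onto $\mathrm{Aut}(Z(A))$, which proves surjectivity and finishes the proof.

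I expect the surjectivity step to be the crux: the naive approach through $\mathrm{Out}(A)$ fails, and the key realization is that the block-permuting bimodules $L_\tau$ remain invertible even when $\tau$ interchanges matrix blocks of different sizes, since their invertibility comes from the Morita equivalence $\C\sim M_n(\C)$ rather than from any algebra isomorphism. The remaining ingredients — block-diagonality in the injectivity argument, the triviality of $\mathsf{Pic}(M_n(\C))$, and the computation of $\sigma_{L_\tau}$ — are routine.
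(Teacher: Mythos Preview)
Your proof is correct. The surjectivity argument is essentially identical to the paper's: your bimodules $L_\tau=\bigoplus_i V_i\otimes_\C V_{\tau(i)}^*$ are exactly the rectangular-matrix bimodules $\bigoplus_i M_{d_i\times d_{\tau(i)}}(\C)$ that the paper writes down directly, just expressed via the Morita modules $V_i$. The injectivity arguments differ. The paper does not restrict to $\mathsf{KPic}(A)$ at the outset; instead it classifies \emph{all} of $\mathsf{Pic}(A)$ by decomposing an arbitrary invertible $H$ as a left module, $H\cong\bigoplus_i M_{d_i\times k_i}(\C)$, computing the left-linear endomorphism ring $\bigoplus_i M_{k_i}(\C)$, and concluding that $(k_1,\dots,k_n)$ must be a permutation of $(d_1,\dots,d_n)$, so that every element of $\mathsf{Pic}(A)$ is already one of the $L_\tau$'s; triviality of $\mathsf{KPic}(A)$ then drops out since $L_\tau$ is $Z(A)$-symmetric only for $\tau=\mathrm{id}$. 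Your route --- block-diagonality forced by $e_im=me_i$, then triviality of $\mathsf{Pic}(M_n(\C))$ via the simplicity of the enveloping algebra $M_n(\C)\otimes M_n(\C)^{\mathrm{op}}$ --- is more direct for the kernel computation and avoids the endomorphism-ring step, but it does not give the explicit description of $\mathsf{Pic}(A)$ as a set that the paper obtains along the way. Your aside that $\mathrm{Out}(A)\to\mathsf{Pic}(A)$ cannot supply surjectivity (e.g.\ for $A=\C\oplus M_2(\C)$) is a useful observation that the paper does not make explicit.
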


\begin{proof}
Let $A$ be a finite-dimensional $C^*$-algebra. Since isomorphic algebras have isomorphic Picard groups, without loss of generality we can assume that:
$$
A=\bigoplus\nolimits_{i=1}^nM_{d_i}(\C) \;,
$$
for some positive integers $n,d_1,\ldots,d_n$.

Let $P_i\in M_{d_i}(\C)$ be the unit of the $i$-th summand in $A$. Note that
$$
Z(A)=\C P_1\oplus\C P_2\oplus\ldots\oplus\C P_n
$$
and $\mathrm{Aut}(Z(A))\cong\mathfrak{S}_n$ is the group of permutations $n$ elements (the coefficients of the projections $P_1,\ldots,P_n$).
To any permutation $\tau\in\mathfrak{S}_n$ we can associate a bimodule:
\begin{equation}\label{appeq:4}
L:=\bigoplus_{i=1}^sL_i:=\bigoplus_{i=1}^s M_{d_i\times d_{\tau(i)}}(\C) \;.
\end{equation}
For $a=(a_1,\ldots,a_n)\in A$ and $m=(m_1,\ldots,m_n)\in L$, left and right actions are defined in terms of matrix multiplication as follows:
\begin{align*}
am &:=(a_1m_1,a_2m_2,\ldots,a_nm_n) \;, \\
ma &:=(m_1a_{\tau(1)},m_2a_{\tau(2)},\ldots,m_na_{\tau(n)}) \;.
\end{align*}
Now, let $a$ be central, that is $a_i=\lambda_iP_i$ with $\lambda_i\in\C$ for all $i=1,\ldots,n$.
Then
$$
\sigma_L(a)m=(\lambda_{\tau(1)}m_1,\ldots,\lambda_{\tau(n)}m_n)=(\lambda_{\tau(1)}P_1+\ldots+\lambda_{\tau(n)}P_n)m \;.
$$
Under the above identification of $\mathrm{Aut}(Z(A))$ with $\mathfrak{S}_n$, $\sigma_L=\tau$ is exactly the permutation we started from, proving that the map \eqref{appeq:3} is surjective.

\medskip

It remains to prove that it is also injective, i.e.~that $\mathsf{KPic}(A)$ is trivial.
Every $(H)\in\mathsf{Pic}(A)$ is represented by an $A$-bimodule $H$ that is finitely generated projective as one-sided module. Since $A$ is finite-dimensional, $H$ must be finite-dimensional as well.

Let $H$ be any finite-dimensional left $A$-module. Since $P_1,\ldots,P_s$ are orthogonal projections, $H$ decomposes as
$H=\bigoplus_{i=1}^nH_i$, with
$$
H_i=P_i\cdot H\cong M_{d_i\times k_i}(\C) \;.
$$
Here $k_i$ is the multiplicity of the (unique) irreducible representation $\C^{d_i}$ of $M_{d_i}(\C)$ in $P_i\cdot H$, and the action of the algebra $M_{d_i}(\C)$ on $M_{d_i\times k_i}(\C)$ is given by left matrix multiplication.

Let $f\in{}_A\mathrm{Hom}(H_i,H)$ be a left $A$-linear map. Then $P_jf(v)=f(P_jv)=0$ for all $v\in H_i$ if $j\neq i$. Thus $f(H_i)\subseteq H_i$ and 
$$
\mathrm{End}_A(H)\cong\bigoplus\nolimits_{i=1}^n\mathrm{Hom}_A(H_i,H)\cong
\bigoplus\nolimits_{i=1}^n\mathrm{End}_A(H_i) \;.
$$
Therefore,
$$
\mathrm{End}_A(H)\cong\bigoplus\nolimits_{i=1}^nM_{k_i}(\C)
$$
where the $i$-th summand act on $H_i\cong M_{n_i\times k_i}(\C)$ via matrix multiplication from the right. One has $\mathrm{End}_A(H)\cong A$, i.e.~$(H)\in\mathsf{Pic}(A)$, if and only if there exists a permutation $\tau\in \mathfrak{S}_n$ such that
$$
(d_{\tau(1)},\ldots,d_{\tau(n)})=(k_1,\ldots,k_n) \;.
$$
Every element in $\mathsf{Pic}(A)$ is then represented by a bimodule of the form \eqref{appeq:4}.
Since $mP_i=P_{\tau(i)}m\;\forall\;m$, the bimodule is symmetric if and only if $\tau=\id$, that means that $(H)=(A)$ is trivial. Thus $\mathsf{KPic}(A)=\{(A)\}$
is contains only the neutral element.
\end{proof}

\end{document}